\newtheorem{theorem}{Theorem}[section]
\newtheorem{lemma}[theorem]{Lemma}
\newtheorem{definition}[theorem]{Definition}
\newcommand{\D}{\mathrm{d}}
\newcommand{\Div}{\mathrm{div}}
\title{A Tikhonov approach to level set curvature computation}
\date{\today}
\author{
Dennis Zvegincev
\thanks{Center for Industrial Mathematics,
University of Bremen,
Bremen, Germany}
}
\begin{document}
\maketitle

\begin{abstract} 
In numerical simulations of two-phase flows,
the computation of the curvature of the interface is a crucial ingredient.
Using a finite element and level set discretization, the discrete interface
is typically the level set of a low order polynomial, which often results
in a poor approximation of the interface curvature.
We present an approach to curvature computation using an approximate inversion
of the $L^2$ projection operator from the Sobolev space $H^2$ or $H^3$.
For finite element computation of the approximate inverse, the resulting higher
order equation is reformulated as a system of second order equations.
Due to the Tikhonov regularization, the method is demonstrated to be stable
against discretization irregularities. Numerical examples are shown for interior
interfaces as well as interfaces intersecting the boundary of the domain.\\\\
\textbf{Keywords:} Level set, Curvature, Tikhonov, higher order PDE, FEM
\end{abstract}

\section{Motivation and background}

In this paper we demonstrate a Tikhonov based approach to reconstruct the Laplacian of a function where we only know its piecewise linear representation in a typical FEM space.

With $\mathbb{P}_h^m(\Omega)$ we describe such a function space of piecewise polynomial and globally continous functions of degree $m$ over a given triangulation $\mathcal{T}_h$ of a domain $\Omega \subset \mathbb{R}^d$, $d\geq 1$. Furthermore $\mathbb{P}_h^m(\Omega)$ ist equipped with the $L^2(\Omega)$ scalar-product.

As an application for this reconstruction we consider the two-phase incompressible Navier-Stokes equation
\begin{align*}
&\rho_i \left(\frac{\partial u}{\partial t} + u \cdot \nabla u \right) = - \nabla p + \Div(\mu_i D(u)) + \rho_i g &\text{~in~} \Omega_i,\\
&\Div(u) = 0 &\text{~in~} \Omega_i,\\
&\left[\sigma \right]_\Gamma n_\Gamma = - \tau \kappa n_\Gamma,~ [u] = 0  &\text{~on~} \Gamma,\\
&V_\Gamma = n_\Gamma \cdot u  &\text{~on~} \Gamma,
\end{align*} which we solve with the extended Finite Element Method (XFEM) \cite{HANSBO20025537} using a level set approach. We will employ a piecewise linear level set function $\phi_h$ whose zero-level describes the discretized interface $\Gamma_h$.

Since the discrete interface $\Gamma_h$ is piecewise linear it is not possible to define its pointwise curvature in a meaningful setting. One possible way to express the surface tension functional lies in the Laplace-Beltrami discretization, but it is well known that convergence properties of that discretized functional is poor, i.e. for the case of a piecewise linear $\phi_h$ the error for the surface tension functional is of order $\mathcal{O}(\sqrt{h})$ \cite{gross2011numerical}.

But we can exploit the relationships between the gradient of the levelset function, the normal of the interface and the curvature of the interface, i.e. in the analytical setting we have

\begin{align*}
n_\Gamma(x) = -\frac{\nabla( \Phi_e(x))}{|\nabla(\Phi_e(x))|}, ~ \kappa(x) = \Div ( n_\Gamma(x)) \text{~for all x~} \in \Gamma,
\end{align*} where $\Phi_e$ is an exact level set function whose zero-level describes $\Gamma$. Under the assumption that $\Phi_e$ is a signed distance function to $\Gamma$ we have $|\nabla \Phi_e| = 1$ almost everywhere. Therefore the curvature $\kappa$ could simply be  calculated via 
\begin{align*}
\kappa(x) = -\Delta \Phi_e(x) \text{~for all x~} \in \Gamma.
\end{align*}

Now in the numerical setting, where we only have $\phi_h \in \mathbb{P}_h^1(\Omega)$, the Laplacian of $\phi_h$ is unfortunately of distributional type and we can't use this relationship directly. As mentioned above other methods have to be used to describe the surface tension functional, e.g. the Laplace-Beltrami discretization if one is only interested in the functional, or weak problem methods like in \cite{MARCHANDISE2007949} or improved discretization schemes like in \cite{Lervag2013} if the curvature term is of interest itself.

In this paper we will derive a smooth substitute $\Phi$ for $\phi_h$, which lies in a Sobolev-space $H^k(\Omega)$, $k \geq 2$. This way we can get a meaningful approximation to the interface curvature by calculating $\Delta \Phi$, assuming that $\phi_h$ and thus $\Phi$ are close to a signed distance function. We will also put emphasis on receiving meaningful curvature values on the boundary, i.e. where $\Gamma$ intersects $\partial \Omega$, as we're interested in accurately calculating wetting phenomena.

In order to derive such a smooth function we will invert the $L^2$-projection operator $A_m:H^k(\Omega) \to \mathbb{P}^m_h(\Omega)$.

For all standard functional analysis techniques we employ in the following sections we will refer to \cite{werner2018funktionalanalysis}.

\begin{definition}
\label{OperatorDefinition}
We define the $L^2$-projection operator $A_m:H^k(\Omega) \to \mathbb{P}^m_h(\Omega)$, $\Phi_{e} \mapsto \phi_h$ where $\phi_h \in \mathbb{P}_h^m(\Omega)$ is the solution to the following weak problem:

For a given $\Phi_e \in H^k(\Omega)$, find $\phi_h \in \mathbb{P}^m_h(\Omega)$ such that
\begin{align}
\label{1Laplace}
\int_\Omega \phi_h \psi_h ~\D x = \int_\Omega \Phi_e \psi_h ~\D x
\end{align} for all $\psi_h \in \mathbb{P}^m_h(\Omega)$.
\end{definition}

This operator comes in play whenever we take a smooth and exact level-set function $\Phi_e$ and project it onto a discrete level-set function $A_m(\Phi_e) =:\phi_h \in \mathbb{P}^m_h(\Omega)$. Therefore, it is natural to try to invert this process and try to calculate a smooth preimage under the operator $A_m$ for a given discrete level-set function.

\begin{lemma}[Properties of $A_m$]
\label{LemmaProperties}
The following assertions for the operator $A_m$ hold true:

\begin{itemize}
\item[i)] The operator $A_m$ is well defined, i.e. for every $\Phi_e \in H^k(\Omega)$ there exists a unique function $\phi_h = A_m\Phi_e \in \mathbb{P}^m_h(\Omega)$ such that \eqref{1Laplace} is fulfilled. Furthermore, $A_m$ is a linear operator.
\item[ii)] For every $\Phi \in H^k(\Omega)$ the operator $A_m$ fulfills the equation $$ \langle A_m\Phi, A_m\Phi \rangle_{L^2(\Omega)} = \langle\Phi, A_m\Phi \rangle_{L^2(\Omega)}.$$
\item[iii)] For every $\Phi \in H^k(\Omega)$ the inequation $$\| A_m\Phi\|_{L^2(\Omega)} \leq \|\Phi\|_{L^2(\Omega)}$$ holds true.
\end{itemize}
\end{lemma}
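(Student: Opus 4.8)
The plan is to verify the three assertions using standard Hilbert-space arguments on the finite-dimensional space $\mathbb{P}^m_h(\Omega)$ equipped with the $L^2$ inner product.

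For (i), I would first note that $\mathbb{P}^m_h(\Omega)$ is a finite-dimensional subspace of $L^2(\Omega)$, hence a closed subspace, and that $H^k(\Omega) \hookrightarrow L^2(\Omega)$ continuously. The weak problem \eqref{1Laplace} then characterizes $\phi_h$ as the orthogonal projection of $\Phi_e$ onto $\mathbb{P}^m_h(\Omega)$ with respect to $\langle\cdot,\cdot\rangle_{L^2(\Omega)}$. Existence and uniqueness follow either from the projection theorem or, equivalently, from Lax–Milgram applied to the bilinear form $a(\phi_h,\psi_h)=\langle\phi_h,\psi_h\rangle_{L^2(\Omega)}$ on $\mathbb{P}^m_h(\Omega)$, which is trivially coercive and bounded with respect to the $L^2$ norm; the right-hand side $\psi_h\mapsto\langle\Phi_e,\psi_h\rangle_{L^2(\Omega)}$ is a bounded linear functional by Cauchy–Schwarz. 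Linearity of $A_m$ is then immediate from uniqueness and the bilinearity of the $L^2$ inner product: if $\phi_h^{(j)}=A_m\Phi_e^{(j)}$ solves the weak problem for $\Phi_e^{(j)}$, then $\alpha\phi_h^{(1)}+\beta\phi_h^{(2)}$ solves it for $\alpha\Phi_e^{(1)}+\beta\Phi_e^{(2)}$, and uniqueness forces $A_m(\alpha\Phi_e^{(1)}+\beta\Phi_e^{(2)})=\alpha A_m\Phi_e^{(1)}+\beta A_m\Phi_e^{(2)}$.

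For (ii), the key observation is that $A_m\Phi\in\mathbb{P}^m_h(\Omega)$ is itself an admissible test function $\psi_h$ in \eqref{1Laplace}. Substituting $\psi_h=A_m\Phi$ into the defining identity $\int_\Omega (A_m\Phi)\,\psi_h\,\D x=\int_\Omega \Phi\,\psi_h\,\D x$ gives exactly $\langle A_m\Phi,A_m\Phi\rangle_{L^2(\Omega)}=\langle\Phi,A_m\Phi\rangle_{L^2(\Omega)}$. For (iii), I would combine (ii) with the Cauchy–Schwarz inequality: $\|A_m\Phi\|_{L^2(\Omega)}^2=\langle\Phi,A_m\Phi\rangle_{L^2(\Omega)}\le\|\Phi\|_{L^2(\Omega)}\|A_m\Phi\|_{L^2(\Omega)}$, and dividing by $\|A_m\Phi\|_{L^2(\Omega)}$ (the inequality being trivial when $A_m\Phi=0$) yields $\|A_m\Phi\|_{L^2(\Omega)}\le\|\Phi\|_{L^2(\Omega)}$.

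Honestly, none of the three steps presents a real obstacle — this is a textbook projection argument. The only point requiring a sliver of care is making sure the functional-analytic setup is stated cleanly: that $\mathbb{P}^m_h(\Omega)$ is genuinely a closed subspace of $L^2(\Omega)$ (automatic in finite dimensions) and that the embedding $H^k(\Omega)\subset L^2(\Omega)$ is what makes the right-hand side of \eqref{1Laplace} well defined. I would present (i) via Lax–Milgram (or the Riesz representation / projection theorem) citing \cite{werner2018funktionalanalysis}, then derive (ii) and (iii) as short consequences as above.
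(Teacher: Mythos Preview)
Your proposal is correct and follows essentially the same route as the paper: part (i) via Riesz/Lax--Milgram on the finite-dimensional space $\mathbb{P}^m_h(\Omega)$ with the $L^2$ inner product, part (ii) by testing the defining identity with $\psi_h=A_m\Phi$, and part (iii) from (ii) together with Cauchy--Schwarz. The only cosmetic difference is that the paper invokes the Riesz representation theorem directly and deduces linearity from it, whereas you spell out linearity from uniqueness, but this is the same argument.
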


\begin{proof}~\\
\textbf{i):} First we will show the operator $A_m$ is well-defined. The operator is defined by the solution of the variational formulation in \eqref{1Laplace} and therefore it suffices to argue that the left-hand side constitutes the $L^2(\Omega)$-scalar product, whereas the right hand side is a continous linear functional on $\mathbb{P}^m_h(\Omega)$. In our definition $\mathbb{P}^m_h(\Omega)$ is equipped with the $L^2(\Omega)$-scalar product and therefore the Riesz-representation theorem guarantees a unique solution $\phi_h \in \mathbb{P}^m_h(\Omega)$ for every $\Phi_e \in H^k(\Omega)$.
As $A$ is the solution operator from the Riesz-representation theorem we know that the operator in question is thus linear.
\\\\
\textbf{ii):}
Let $\Phi \in H^k(\Omega)$ be abritrary. Recalling the definition of the operator $A_m$ in Definition \ref{OperatorDefinition}, we directly get
\begin{align*}
\langle A_m \Phi, A_m \Phi \rangle_{L^2(\Omega)}
=
\int_\Omega A_m \Phi \underbrace{A_m \Phi}_{\in \mathbb{P}^m_h(\Omega)} \D x
=
\int_\Omega \Phi \underbrace{A_m \Phi}_{\in \mathbb{P}^m_h(\Omega)} \D x
=
\langle \Phi, A_m \Phi \rangle_{L^2(\Omega)}
\end{align*} as (the left) $A_m \Phi$ is the solution to the variational problem in \eqref{1Laplace} to the correspoding $\Phi$ and therefore fulfills \eqref{1Laplace} while (the right) $A_m \Phi$ functions as a testfunction in $\mathbb{P}^m_h(\Omega)$.
\\\\
\textbf{iii):}
This property is a direct result of ii). Let $\Phi \in H^k(\Omega)$ be abritrary. Then we have
\begin{align*}
\|A_m\Phi\|^2_{L^2(\Omega)} = \langle A_m \Phi, A_m \Phi \rangle_{L^2(\Omega)}
\overset{ii)}{=}
\langle A_m\Phi, \Phi \rangle_{L^2(\Omega)}
\overset{C.S.}{\leq}
\|A_m \Phi\|_{L^2(\Omega)}\| \Phi\|_{L^2(\Omega)}
\end{align*}where we employ the Cauchy-Schwartz inequality (C.S.). 
\end{proof}

In this paper we will specifically look at the operator $A_1$, i.e. the projection into the piecewise linear FEM space $\mathbb{P}^1_h(\Omega)$. Furthermore we will omit the subscript 1 from now on and refer to our problem operator simply as $A$.

Looking at the function spaces it is obvious that the inversion of $A$ is an ill-posed problem, as the operator $A$ maps from an infinite dimensional vector space onto a finite one, therefore necessitating a regularization approach.
\\\\
In Section 2 we will define a Tikhonov functional to find a suitable solution to this ill-posed problem and derive a higher order PDE whose solution coincides with the minimum of the Tikhonov functional.
In Section 3 we will then reformulate this higher order PDE into a system of 2nd order PDEs followed by Section 4 with numerical experiments for our reconstruction. This paper is then closed by the conclusion and an outlook in Section 5.

\section{Analytical Problem formulation}
We want to find a smooth function $\Phi \in H^k(\Omega)$ whose image under $A$ is close to $\phi_h$. As described above, inversion of $A$ is ill-posed and thus it is necessary to find a regularized solution. 

In order to achieve our task we will choose the classical Tikhonov-functional
\begin{align}
\label{Functional}
J(\Phi) := \|A\Phi - \phi_h \|_{L^2(\Omega)}^2 + \alpha \| \Phi \|^2_{H^k(\Omega)}.
\end{align}

\begin{theorem}
\label{TheoremUniqueness}
For every $\phi_h \in \mathbb{P}^1_h(\Omega)$ there exists a unique global minimum $\Phi \in H^k(\Omega)$ of the functional $J$ defined in \eqref{Functional}.
\end{theorem}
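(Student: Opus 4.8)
The plan is to recognize $J$ as a standard quadratic (strictly convex, coercive) functional on the Hilbert space $H^k(\Omega)$ and invoke the classical existence-and-uniqueness result for Tikhonov functionals with a bounded linear forward operator. Concretely, I would first record that $A = A_1 : H^k(\Omega) \to \mathbb{P}^1_h(\Omega)$ is linear (Lemma \ref{LemmaProperties} i)) and bounded: by Lemma \ref{LemmaProperties} iii) we have $\|A\Phi\|_{L^2(\Omega)} \le \|\Phi\|_{L^2(\Omega)} \le \|\Phi\|_{H^k(\Omega)}$, so $A$ is continuous from $H^k(\Omega)$ into $L^2(\Omega)$ with norm at most $1$. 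Hence $\Phi \mapsto \|A\Phi - \phi_h\|_{L^2(\Omega)}^2$ is continuous and convex, and $\alpha\|\Phi\|_{H^k(\Omega)}^2$ adds strict convexity (for $\alpha > 0$), so $J$ is strictly convex on $H^k(\Omega)$; this already gives uniqueness of any minimizer.

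For existence I would argue by the direct method. Coercivity is immediate: $J(\Phi) \ge \alpha\|\Phi\|_{H^k(\Omega)}^2 \to \infty$ as $\|\Phi\|_{H^k(\Omega)} \to \infty$, so any minimizing sequence $(\Phi_n)$ is bounded in $H^k(\Omega)$. Since $H^k(\Omega)$ is a Hilbert space, a subsequence converges weakly, $\Phi_n \rightharpoonup \Phi^\ast$. The Hilbert-space norm $\|\cdot\|_{H^k(\Omega)}^2$ is weakly lower semicontinuous, and the data-misfit term is also weakly lower semicontinuous: $A$ is bounded linear, hence weak-to-weak continuous, so $A\Phi_n - \phi_h \rightharpoonup A\Phi^\ast - \phi_h$ in $L^2(\Omega)$, and $\|\cdot\|_{L^2(\Omega)}^2$ is weakly lsc. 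Therefore $J(\Phi^\ast) \le \liminf_n J(\Phi_n) = \inf J$, so $\Phi^\ast$ is a (the) global minimizer.

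Alternatively, and perhaps more cleanly, I would take the Euler–Lagrange route: setting the Gâteaux derivative of $J$ to zero yields the normal equation $(A^\ast A + \alpha I)\Phi = A^\ast \phi_h$ in $H^k(\Omega)$, where $A^\ast : L^2(\Omega) \to H^k(\Omega)$ is the Hilbert-space adjoint. The operator $A^\ast A + \alpha I$ is bounded, self-adjoint, and strongly positive (coercive: $\langle (A^\ast A + \alpha I)\Phi, \Phi\rangle_{H^k} \ge \alpha\|\Phi\|_{H^k}^2$), so Lax–Milgram (or Riesz) gives a unique solution, which by convexity is the unique global minimizer. I expect the only point needing a little care — the "main obstacle," though it is mild — is bookkeeping the two different norms: $A$ maps into $L^2$ while the penalty lives in $H^k$, so one must be explicit that the embedding $H^k(\Omega) \hookrightarrow L^2(\Omega)$ makes $A$ bounded as an operator on $H^k(\Omega)$ and that $A^\ast$ is taken with respect to the $H^k$ inner product; everything else is the textbook Tikhonov argument.
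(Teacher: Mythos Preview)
Your proposal is correct and your primary argument---the direct method via coercivity, weak compactness in the Hilbert space $H^k(\Omega)$, and weak lower semicontinuity of both the data-misfit and penalty terms, with strict convexity of the penalty giving uniqueness---is exactly the approach of the paper, which packages the same ingredients as verification of the hypotheses of an external Tikhonov existence result (Theorem~2.5 in \cite{MAZZIERI2012396}) rather than spelling them out directly. Your alternative Lax--Milgram route via the normal equation is precisely what the paper carries out separately in Lemma~\ref{LemUniquenessPDEFirst} for the Euler--Lagrange weak formulation, so both of your proposed arguments appear in the paper, just split across two results.
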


\begin{proof}
In order to show existence and uniqueness of a global minizer of \eqref{Functional}, we first note that we minize over the whole Hilbert-space of $H^k(\Omega)$. Looking at our regularizer, it is of the form $W(\Phi):=\alpha \|\Phi\|^2_{H^k(\Omega)}$. This regularizer $W$ is obviously bounded from below with $W(\Phi) = \alpha\|\Phi\|^2_{H^k(\Omega)} \geq 0$ for all  $\Phi\in H^k(\Omega)$.

Let $\{x_n\}_{n\in \mathbb{N}} \subset H^k(\Omega)$ be a $W$-bounded sequence, i.e. there exists a $k > 0$ such that $|W(x_n)| \leq k$ for all $n\in \mathbb{N}$. Therefore $\{x_n\}_{n\in \mathbb{N}}$ is also a bounded sequence in $H^k(\Omega)$ as with our choice of $W$ we have
\begin{align*}
|W(\Phi)| =\alpha \|\Phi\|^2_{H^k(\Omega)}  \leq k.
\end{align*}
Since $H^k(\Omega)$ is a Hilbert-space, a result from the Banach-Alaoglu theorem states that every bounded sequence in $H^k(\Omega)$ and therefore every $W$-bounded sequence has a weakly convergent subsequence i.e. there exists an $x\in H^k(\Omega)$ such that $x_{n_j} \rightharpoonup x$.

Furthermore, norms in reflexive Banach-spaces are known to be weakly lower-semicontinous. Now let $\{x_n\}_{n \in \mathbb{N}}$ be a $W$-bounded sequence which weakly converges towards $x \in H^k(\Omega)$. Therefore there exists a subsequence  which also fulfills $W(x) \leq \liminf_{j \to \infty} W(x_{n_j})$ as $W$ is simply a squared norm of the sequence space.

Last but not least, since $W$ consists of a squared norm of the space $H^k(\Omega)$ it is therefore a strictly convex functional over $H^k(\Omega)$.

We therefore fulfill all requirements of Theorem 2.5 in \cite{MAZZIERI2012396} and thus our functional $J$ in \eqref{Functional} has a unique and global minimizer.
\end{proof}

In order to calculate the minimum of \eqref{Functional}, we will now derive a partial differential equation whose solution is the same as the minimum of \eqref{Functional}. Optimality condition for the minimum of $J$ yields
\begin{align}
\label{NecCondition}
\delta J(\Phi,\Psi) = 0
\end{align} for every $\Psi \in H^k(\Omega)$, where $\delta J(\Phi,\Psi)$ is the Gateaux-derivative of $J$ at the point $\Phi$ in direction $\Psi$.

Explicitly calculating this Gateaux-derivative gives us
\begin{align*}
\delta J(\Phi,\Psi) = \lim_{\epsilon \to 0} \frac{J(\Phi + \epsilon \Psi) - J(\Phi)}{\epsilon}\\
=
\lim_{\epsilon \to 0} \frac{1}{\epsilon}
\left(\|A(\Phi+\epsilon \Psi) -  \phi_h\|^2_{L^2(\Omega)} + \alpha \|\Phi + \epsilon \Psi\|_{H^k(\Omega)} 
-
\|A\Phi -  \phi_h\|^2_{L^2(\Omega)} + \alpha \|\Phi \|_{H^k(\Omega)}  \right)\\
=
\lim_{\epsilon \to 0} \frac{1}{\epsilon}
\big(
\langle A\Phi + A \epsilon \Psi - \phi_h,  A\Phi + A \epsilon \Psi  - \phi_h \rangle_{L^2(\Omega)}
+ \alpha \langle \Phi + \epsilon \Psi, \Phi + \epsilon \Psi \rangle_{H^k(\Omega)}\\
-\langle A\Phi  - \phi_h,  A\Phi  - \phi_h  \rangle_{L^2(\Omega)}
- \alpha \langle \Phi , \Phi   \rangle_{H^k(\Omega)}
\big)\\
=
\lim_{\epsilon \to 0} \frac{1}{\epsilon}
\big(
2\epsilon \langle A\Phi  - \phi_h, A\Psi    \rangle_{L^2(\Omega)} + \epsilon^2 \langle A\Psi, A\Psi  \rangle_{L^2(\Omega)}  + 2\alpha \epsilon \langle \Phi,\Psi\rangle_{H^k(\Omega)} + \epsilon^2 \langle \Psi, \Psi \rangle_{H^k(\Omega)}
\big)\\
=
2 \langle A\Phi  - \phi_h, A\Psi    \rangle_{L^2(\Omega)}
+
 2\alpha  \langle \Phi,\Psi\rangle_{H^k(\Omega)}.
\end{align*}
With the necessary condition \eqref{NecCondition} we get the following weak problem formulation: Find $\Phi \in H^k(\Omega)$ such that
\begin{align}
\label{PDEFirstVersion}
  \langle A\Phi , A\Psi    \rangle_{L^2(\Omega)}
+
 \alpha  \langle \Phi,\Psi\rangle_{H^k(\Omega)}
 =
   \langle \phi_h , \Psi    \rangle_{L^2(\Omega)}
\end{align} for all $\Psi\in H^k(\Omega)$.

\begin{lemma}
\label{LemUniquenessPDEFirst}
For every $\phi_h \in \mathbb{P}^m_h(\Omega)$ there exists a unique solution $\Phi \in H^k(\Omega)$ such that \eqref{PDEFirstVersion} is fulfilled for all $\Psi\in H^k(\Omega)$.
\end{lemma}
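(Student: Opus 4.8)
The plan is to recognize \eqref{PDEFirstVersion} as a variational equation of Lax--Milgram type on the Hilbert space $H^k(\Omega)$. Define the bilinear form
\[
a(\Phi,\Psi) := \langle A\Phi, A\Psi\rangle_{L^2(\Omega)} + \alpha\langle\Phi,\Psi\rangle_{H^k(\Omega)}
\]
and the linear functional $F(\Psi) := \langle\phi_h,\Psi\rangle_{L^2(\Omega)}$, so that \eqref{PDEFirstVersion} reads $a(\Phi,\Psi) = F(\Psi)$ for all $\Psi \in H^k(\Omega)$. Bilinearity of $a$ and linearity of $F$ are immediate from the linearity of $A$ (Lemma \ref{LemmaProperties} i)) and of the two scalar products, so the task reduces to verifying boundedness of $a$ and $F$ and coercivity of $a$.

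For continuity of $a$ I would estimate, using Cauchy--Schwarz in both scalar products,
\[
|a(\Phi,\Psi)| \le \|A\Phi\|_{L^2(\Omega)}\|A\Psi\|_{L^2(\Omega)} + \alpha\|\Phi\|_{H^k(\Omega)}\|\Psi\|_{H^k(\Omega)},
\]
and then apply $\|A\Phi\|_{L^2(\Omega)} \le \|\Phi\|_{L^2(\Omega)}$ from Lemma \ref{LemmaProperties} iii) together with the continuous embedding $H^k(\Omega)\hookrightarrow L^2(\Omega)$ to bound the first term by $C^2\|\Phi\|_{H^k(\Omega)}\|\Psi\|_{H^k(\Omega)}$, giving $|a(\Phi,\Psi)| \le (C^2+\alpha)\|\Phi\|_{H^k(\Omega)}\|\Psi\|_{H^k(\Omega)}$. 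Continuity of $F$ follows the same way, since $\phi_h$ is fixed: $|F(\Psi)| \le \|\phi_h\|_{L^2(\Omega)}\|\Psi\|_{L^2(\Omega)} \le C\|\phi_h\|_{L^2(\Omega)}\|\Psi\|_{H^k(\Omega)}$.

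Coercivity is where the Tikhonov regularization pays off: since $\|A\Phi\|_{L^2(\Omega)}^2 \ge 0$, one gets at once $a(\Phi,\Phi) \ge \alpha\|\Phi\|_{H^k(\Omega)}^2$, and $\alpha>0$ furnishes the coercivity constant. With $a$ bounded and coercive (and in fact symmetric and positive) and $F$ bounded, the Lax--Milgram theorem --- or, exploiting symmetry, the Riesz representation theorem applied to the equivalent inner product $a(\cdot,\cdot)$ on $H^k(\Omega)$ --- yields a unique $\Phi \in H^k(\Omega)$ solving \eqref{PDEFirstVersion}. I do not expect a genuine obstacle here; the only point requiring a little care is to invoke the embedding constant so that both $a$ and $F$ are controlled by the full $H^k$-norm rather than just the $L^2$-norm, which is precisely what makes $\alpha>0$ indispensable for the argument.
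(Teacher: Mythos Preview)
Your proposal is correct and follows essentially the same Lax--Milgram argument as the paper: Cauchy--Schwarz plus Lemma~\ref{LemmaProperties}~iii) for continuity of the bilinear form, boundedness of $\phi_h$ for continuity of the right-hand side, and dropping the nonnegative term $\langle A\Phi,A\Phi\rangle_{L^2(\Omega)}$ for coercivity with constant $\alpha$. The only cosmetic difference is that the paper uses the explicit inequality $\|\cdot\|_{L^2(\Omega)}\le\|\cdot\|_{H^k(\Omega)}$ (so your embedding constant is simply $C=1$), arriving at the continuity constant $1+\alpha$.
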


\begin{proof}
Showing unique existence of the solution is a classical application of the Lax-Milgram theorem. First, for the RHS we have
\begin{align*}
\langle \phi_h, \Psi \rangle_{L^2(\Omega)} \overset{C.S.}{\leq} \|\phi_h\|_{L^2(\Omega)}\|\Psi\|_{L^2(\Omega)} 
\leq
\|\phi_h\|_{L^2(\Omega)}\|\Psi\|_{H^k(\Omega)}
\end{align*} for every $\Psi \in H^k(\Omega)$. As we have $\phi_h \in  \mathbb{P}^1_h(\Omega)$, the function $\phi_h$ itself and therefore its $L^2(\Omega)$-norm are bounded and thus the RHS is a linear continous functional on $H^k(\Omega)$.

Similarly, for the LHS we receive
\begin{align*}
  \langle A\Phi , A\Psi    \rangle_{L^2(\Omega)}
+
 \alpha  \langle \Phi,\Psi\rangle_{H^k(\Omega)}
 \overset{C.S.}{\leq}
 \|A\Phi\|_{L^2(\Omega)} \|A\Psi\|_{L^2(\Omega)}
 +
 \alpha\|\Phi\|_{H^k(\Omega)}\|\Psi\|_{H^k(\Omega)}\\
  \overset{Lem. ~\ref{LemmaProperties} ~iii)}{\leq}
   \|\Phi\|_{L^2(\Omega)} \|\Psi\|_{L^2(\Omega)}
 +
 \alpha\|\Phi\|_{H^k(\Omega)}\|\Psi\|_{H^k(\Omega)}\\
 \leq 
(1+\alpha) \|\Phi\|_{H^k(\Omega)}\|\Psi\|_{H^k(\Omega)}
\end{align*} for every $\Phi, \Psi \in H^k(\Omega)$
and
\begin{align*}
  \langle A\Phi , A\Phi    \rangle_{L^2(\Omega)}
+
 \alpha  \langle \Phi,\Phi\rangle_{H^k(\Omega)}
 \geq
 \alpha \langle \Phi,\Phi\rangle_{H^k(\Omega)}
 = \alpha\|\Phi\|_{H^k(\Omega)}^2
\end{align*} for every $\Phi \in H^k(\Omega)$.
The bilinearform on the LHS of \eqref{PDEFirstVersion} is therefore continous and coercive. 

As such our weak formulation in \eqref{PDEFirstVersion} fulfills all requirements of the Lax-Milgram theorem and for every $\phi_h$ we receive a unique solution $\Phi \in H^k(\Omega)$.
\end{proof}

So far we have established in Theorem \ref{TheoremUniqueness} that the minimum of our functional $J$ is unique while the weak form PDE in \eqref{PDEFirstVersion} also has a unique solution. As this PDE was constructed via the optimality condition \eqref{NecCondition}, its solution is therefore also the desired minimum of $J$.

In order to solve this PDE numerically, we will need to eliminate the operator $A$ in the problem formulation. First, employing Lemma \ref{LemmaProperties} ii), we can simplify the formulation in \eqref{PDEFirstVersion} as
\begin{align*}
  \langle A\Phi , A\Psi    \rangle_{L^2(\Omega)}
+
 \alpha  \langle \Phi,\Psi\rangle_{H^k(\Omega)}
 =
   \langle \phi_h , \Psi    \rangle_{L^2(\Omega)}\\
   \Leftrightarrow
     \langle A\Phi , \Psi    \rangle_{L^2(\Omega)}
+
 \alpha  \langle \Phi,\Psi\rangle_{H^k(\Omega)}
 =
   \langle \phi_h , \Psi    \rangle_{L^2(\Omega)}.
\end{align*}

Now we will replace $A\Phi$ with another unknown $\lambda \in \mathbb{P}_h^1(\Omega)$ which will be part of the solution. Obviously $A\Phi =  \lambda$ will need to be satisfied as well and recalling the definition \ref{OperatorDefinition} of the projection operator $A$ means the following two integrals
\begin{align*}
\int_\Omega \lambda \eta \D x = \int_\Omega \Phi \eta \D x
\end{align*} will need to match for every $\eta \in \mathbb{P}^1_h(\Omega)$. 

As such we receive the following mixed problem formulation: 
Find $(\Phi,\lambda) \in H^k(\Omega) \times \mathbb{P}^1_h(\Omega)$ such that
\begin{equation}
\label{PDEMixed}
\begin{split}
 \alpha  \langle \Phi,\Psi\rangle_{H^k(\Omega)}
 +
   \langle \lambda , \Psi    \rangle_{L^2(\Omega)}
 &=
   \langle \phi_h , \Psi    \rangle_{L^2(\Omega)}\\
   \langle \lambda, \eta  \rangle_{L^2(\Omega)} - \langle \Phi, \eta \rangle_{L^2(\Omega)}&= 0
\end{split}
\end{equation} for every $(\Psi, \eta) \in H^k(\Omega) \times \mathbb{P}^1_h(\Omega)$.

It is important to keep in mind that this isn't a new problem formulation for which we would need to show existence and uniqueness of a solution. Rather this is the formulation in equation \eqref{PDEFirstVersion} where we have written out the definition of the operator $A$. As such we already know a unique solution pair $(\Phi,\lambda) \in H^k(\Omega) \times \mathbb{P}^1_h(\Omega)$ exists while $\Phi$ minimizes our functional $J$.

\subsection{Choice of Sobolev-space $H^k(\Omega)$ and strong problem formulation}
\label{SectionChoiceSobolev}
Up to this point we have not yet explicitly chosen the degree of weak-differentialibility we want from the smooth reconstruction. Since we're interested in the curvature of the interface, we have to set atleast $k \geq 2$.

Different degrees for $k$ will result in different boundary conditions our reconstruction will need to fulfill and we will examine the possible choices $k =2,3$.

As a first step, we will perform integration by parts on the various terms of the scalar product $\langle \cdot, \cdot \rangle_{H^3(\Omega)}$. As a slight modification we will also introduce a different regularization parameter $\alpha_k$ for each term. 

Additionally we will assume $H^6(\Omega)$-regularity of $\Phi$, so that we can derive a strong problem formulation. Thus we get the four terms $B_0, \hdots, B_3$ as
\begin{align*}
B_0(\Phi, \Psi) := \alpha_0 \langle \Phi, \Psi \rangle_{L^2(\Omega)},
\end{align*}
\begin{align*}
B_1(\Phi, \Psi) :=& \alpha_1 \sum_{i} \langle \partial_i \Phi, \partial_i \Psi  \rangle_{L^2(\Omega)}
=
-\alpha_1 \sum_{i}  \langle \partial_{i}  \partial_{i} \Phi, \Psi \rangle_{L^2(\Omega)}
+
\alpha_1\int_{\partial \Omega}  \nu^T \nabla \Phi \Psi  ~\D S\\
=&
-\alpha_1 \langle \Delta \Phi, \Psi \rangle_{L^2(\Omega)}
+
\alpha_1\int_{\partial \Omega}  \nu^T \nabla \Phi \Psi  ~\D S,
\end{align*}
\begin{align*}
B_2(\Phi, \Psi):=& \alpha_2 \sum_{i,j} \langle \partial_i \partial_j \Phi,\partial_i \partial_j \Psi \rangle_{L^2(\Omega)} 
=
-\alpha_2 \sum_{i,j} \langle \partial_i \partial_j \partial_i \Phi, \partial_j \Psi  \rangle_{L^2(\Omega)} 
+ 
\alpha_2\int_{\partial\Omega} \nu^T  H_{\Phi} \nabla \Psi  ~\D S\\
=&
\alpha_2 \sum_{i,j} \langle \partial_j\partial_i\partial_j\partial_i \Phi, \Psi \rangle_{L^2(\Omega)}
-
\alpha_2 \sum_{i,j} \int_{\partial \Omega} \nu_j \partial_i\partial_j\partial_i \Phi  \Psi ~\D S
+ 
\alpha_2\int_{\partial\Omega} \nu^T  H_{\Phi} \nabla \Psi  ~\D S\\
=&
\alpha_2 \sum_{i,j} \langle \partial_j\partial_j\partial_i\partial_i \Phi, \Psi \rangle_{L^2(\Omega)}
-
\alpha_2 \sum_{i,j} \int_{\partial \Omega} \nu_j \partial_j\partial_i\partial_i \Phi  \Psi ~\D S
+ 
\alpha_2\int_{\partial\Omega} \nu^T  H_{\Phi} \nabla \Psi  ~\D S\\
=&
\alpha_2 \langle \Delta \Delta \Phi, \Psi \rangle_{L^2(\Omega)}
-
\alpha_2 \int_{\partial \Omega} \nu^T \nabla(\Delta \Phi) \Psi ~\D S
+ 
\alpha_2\int_{\partial\Omega} \nu^T  H_{\Phi} \nabla \Psi  ~\D S,
\end{align*}
\begin{align*}
B_3(\Phi, \Psi) :=& \alpha_3 \sum_{i,j,l} \langle \partial_i\partial_j\partial_l\Phi,\partial_i\partial_j\partial_l \Psi   \rangle_{L^2(\Omega)}\\
=&
- \alpha_3 \sum_{i,j,l} \langle \partial_i\partial_i\partial_j\partial_l\Phi,\partial_j\partial_l \Psi   \rangle_{L^2(\Omega)} 
+
\alpha_3 \sum_{i,j,l} \int_{\partial\Omega} \nu_i\partial_i\partial_j\partial_l\Phi \partial_j\partial_l \Psi  ~\D S\\
=&
 \alpha_3 \sum_{i,j,l} \langle \partial_j\partial_i\partial_i\partial_j\partial_l\Phi,\partial_l \Psi   \rangle_{L^2(\Omega)} 
 -
 \alpha_3 \sum_{i,j,l}\int_{\partial\Omega} \nu_j \partial_i\partial_i\partial_j\partial_l \Phi \partial_l \Psi ~\D S\\
 &+
\alpha_3 \sum_{i,j,l} \int_{\partial\Omega} \nu_i\partial_i\partial_j\partial_l\Phi \partial_j\partial_l \Psi  ~\D S\\
=&
- \alpha_3 \sum_{i,j,l} \langle\partial_l \partial_j\partial_i\partial_i\partial_j\partial_l\Phi, \Psi   \rangle_{L^2(\Omega)} 
+
\alpha_3 \sum_{i,j,l}\int_{\partial\Omega} \nu_l \partial_j\partial_i\partial_i\partial_j\partial_l\Phi,  \Psi ~\D S\\
 &-
 \alpha_3 \sum_{i,j,l}\int_{\partial\Omega} \nu_j \partial_i\partial_i\partial_j\partial_l \Phi \partial_l \Psi ~\D S
 +
\alpha_3 \sum_{i,j,l} \int_{\partial\Omega} \nu_i\partial_i\partial_j\partial_l\Phi \partial_j\partial_l \Psi  ~\D S\\
=&
- \alpha_3 \sum_{i,j,l} \langle\partial_l \partial_l\partial_j\partial_j\partial_i\partial_i\Phi, \Psi   \rangle_{L^2(\Omega)} 
+
\alpha_3 \sum_{i,j,l}\int_{\partial\Omega} \nu_l \partial_l\partial_j\partial_j\partial_i\partial_i\Phi,  \Psi ~\D S\\
 &-
 \alpha_3 \sum_{i,j,l}\int_{\partial\Omega} \nu_j \partial_l\partial_j\partial_i\partial_i \Phi \partial_l \Psi ~\D S
 +
\alpha_3 \sum_{i,j,l} \int_{\partial\Omega} \nu_i\partial_i\partial_j\partial_l\Phi \partial_j\partial_l \Psi  ~\D S\\
=&
-\alpha_3 \langle \Delta\Delta\Delta \Phi, \Psi \rangle_{L^2(\Omega)}
+
\alpha_3 \int_{\partial\Omega} \nu^T \nabla(\Delta\Delta \Phi)\Psi  ~\D S\\
&-
\alpha_3 \int_{\partial\Omega} \nu^T H_{\Delta \Phi} \nabla \Psi ~\D S
+
\alpha_3\int_{\partial\Omega} \nu^T \nabla(H_{\Phi}) : H_\Psi ~\D S,
\end{align*}
where $\nu$ denotes the outer normal vector on $\partial \Omega$ and $H_\Phi$ the Hessian of $\Phi$.

Now we can identify the boundary conditions and strong problem formulation by summing the appropriate parts and sorting the boundary integrals by the derivates of the test function $\Psi$.

\subsubsection{Choice of $k=2$}
In this case we sum $B_0$ to $B_2$ and get
\begin{align*}
&B_0(\Phi,\Psi) + B_1(\Phi,\Psi) +B_2(\Phi,\Psi)\\
=&
\alpha_0 \langle \Phi, \Psi \rangle_{L^2(\Omega)}
-
\alpha_1 \langle \Delta \Phi, \Psi \rangle_{L^2(\Omega)}
+
\alpha_2 \langle \Delta \Delta \Phi, \Psi \rangle_{L^2(\Omega)}\\
&+
\alpha_1\int_{\partial \Omega}  \nu^T \nabla \Phi \Psi  ~\D S
-
\alpha_2 \int_{\partial \Omega} \nu^T \nabla(\Delta \Phi) \Psi ~\D S\\
&+ 
\alpha_2\int_{\partial\Omega} \nu^T  H_{\Phi} \nabla \Psi  ~\D S\\
=&
\langle \alpha_0 \Phi - \alpha_1 \Delta \Phi + \alpha_2 \Delta \Delta \Phi, \Psi \rangle_{L^2(\Omega)}\\
&+
\int_{\Omega} \nu^T (\alpha_1\nabla \Phi  - \alpha_2 \nabla(\Delta \Phi)  )\Psi ~\D S\\
&+ 
\alpha_2\int_{\partial\Omega} \nu^T  H_{\Phi} \nabla \Psi  ~\D S.
\end{align*}
Now when we pose the following weak problem: Find $\Phi \in H^2(\Omega)$ such that
\begin{align*}
B_0(\Phi, \Psi) + B_1(\Phi, \Psi) +B_2(\Phi, \Psi) = \langle \phi_h, \Psi \rangle_{L^2(\Omega)}
\end{align*} for all $\Psi \in H^2(\Omega)$, we can identify the corresponding strong problem as: Solve for $\Phi \in H^4(\Omega)$ such that
\begin{align*}
&\alpha_0 \Phi - \alpha_1 \Delta \Phi + \alpha_2 \Delta \Delta \Phi = \phi_h &\text{~in~} \Omega,\\
&\nu^T (\alpha_1\nabla \Phi  - \alpha_2 \nabla(\Delta \Phi)  ) = 0 &\text{~on~}\partial\Omega ,\\
&\alpha_2\nu^T H_\Phi = 0 &\text{~on~}\partial\Omega.
\end{align*} 
Unfortunately, this type of boundary condition is problematic for our method as we seek to find meaningful values for the second derivates of $\Phi$. Such a boundary condition is thus contrary to our goal as it directly sets parts of the second derivates to 0. As we will see later in the numerical experiments, this boundary condition drastically worsens the curvature expressions towards the boundary.

\subsubsection{Choice of $k=3$}
And here we sum $B_0$ to $B_3$ and receive
\begin{equation}
\label{BilinearFormH3}
\begin{split}
&B_0(\Phi,\Psi) + B_1(\Phi,\Psi) +B_2(\Phi,\Psi) + B_3(\Phi,\Psi)\\
=&
\alpha_0 \langle \Phi, \Psi \rangle_{L^2(\Omega)}
-
\alpha_1 \langle \Delta \Phi, \Psi \rangle_{L^2(\Omega)}
+
\alpha_2 \langle \Delta \Delta \Phi, \Psi \rangle_{L^2(\Omega)}
-
\alpha_3 \langle \Delta\Delta \Delta \Phi, \Psi \rangle_{L^2(\Omega)}\\
&+
\alpha_1\int_{\partial \Omega}  \nu^T \nabla \Phi \Psi  ~\D S
-
\alpha_2 \int_{\partial \Omega} \nu^T \nabla(\Delta \Phi) \Psi ~\D S\\
&+ 
\alpha_2\int_{\partial\Omega} \nu^T  H_{\Phi} \nabla \Psi  ~\D S
+
\alpha_3 \int_{\partial\Omega} \nu^T \nabla(\Delta\Delta \Phi)\Psi  ~\D S\\
&-
\alpha_3 \int_{\partial\Omega} \nu^T H_{\Delta \Phi} \nabla \Psi ~\D S
+
\alpha_3\int_{\partial\Omega} \nu^T \nabla(H_{\Phi}) : H_\Psi ~\D S\\
=&
\langle \alpha_0 \Phi - \alpha_1 \Delta \Phi + \alpha_2 \Delta \Delta \Phi -   \alpha_3 \Delta \Delta \Delta \Phi, \Psi \rangle_{L^2(\Omega)}\\
&+
\int_{\partial\Omega}  \nu^T(\alpha_1 \nabla \Phi - \alpha_2 \nabla(\Delta \Phi) + \alpha_3 \nabla(\Delta \Delta\Phi ) )\Psi ~\D S\\
&+
\int_{\partial\Omega} \nu^T(\alpha_2 H_{\Phi} - \alpha_3 H_{\Delta \Phi} )\nabla \Psi  ~\D S\\
&+
\alpha_3\int_{\partial\Omega} \nu^T \nabla(H_{\Phi}) : H_\Psi ~\D S.
\end{split}
\end{equation}
In this case the weak problem is stated as: Find $\Phi\in H^3(\Omega)$ such that
\begin{align*}
B_0(\Phi, \Psi) + B_1(\Phi, \Psi) + B_2(\Phi, \Psi) + B_3(\Phi, \Psi)
= \langle \phi_h, \Psi \rangle_{L^2(\Omega)}
\end{align*} for all $\Psi \in H^3(\Omega)$ and the corresponding strong formulation is now: Find $\Phi \in H^6(\Omega)$ such that
\begin{equation}
\label{StrongPDE}
\begin{split}
 \alpha_0 \Phi - \alpha_1 \Delta \Phi + \alpha_2 \Delta \Delta \Phi -   \alpha_3 \Delta \Delta \Delta \Phi = \phi_h \text{~in~} \Omega,\\
  \nu^T \nabla(H_{\Phi})  = 0 \text{~on~} \partial\Omega,\\
\nu^T(\alpha_2 H_{\Phi} - \alpha_3 H_{ \Phi} ) = 0 \text{~on~} \partial\Omega,\\
\nu^T(\alpha_1 \nabla \Phi - \alpha_2 \nabla \Phi + \alpha_3 \nabla\Phi ) = 0 \text{~on~} \partial\Omega.
\end{split}
\end{equation}

While we do not have a problematic boundary condition, reconstruction into $H^3(\Omega)$ will unfortunately have other woes: The signed distance level set function for a given exact interface will generally have kinks and the gradient will either have singularities or even jumps. While these problematic regions will not be on $\Gamma$ itself, if $\Gamma$ is smooth enough, it poses a different problem as the exact level set function will therefore not be a member of $H^3(\Omega)$ or sometimes not even of $H^2(\Omega)$ in the first place. An $H^3(\Omega)$ function can therefore often haven too much regularity and the reconstruction will have to oscillate near such problematic regions. A simple example would be the level set function $\phi(x):= R - |x|$ which is the level set function of a circle with radius $R$ and centered around the 0-point. For this function the gradient is not determined at 0 and its Laplacian converges to $-\infty$ around the 0-point.

\section{Discretization}
In the previous section we have seen that we can minimize the functional $J$ in \eqref{Functional} by solving a higher order PDE. In this chapter, we will now focus on how this higher order PDE can be solved numerically.

We will perform this only for the case of $H^3$ reconstruction. In the case of the $H^2$ reconstruction, all the steps are performed analogous and are left for the reader.

\subsection{Reformulation from higher order to a system of 2nd order }

Assume one want to solve the strong problem formulation in \eqref{StrongPDE}
with Finite Element Method. The standard FEM ansatz spaces are only a subset of $H^1(\Omega)$ while the weak formulation of this problem will require at least $H^3(\Omega)$ functions. 

In order to solve this numerically, we will reformulate this PDE of 6th order into a system of three PDEs of 2nd order. This is one of the possible techniques to handle higher order PDEs as demonstrated in \cite{Droniu2017} and \cite{HighOrderPDEDiss}. This gives us the benefit of directly receiving the Laplacian as part of the solution.
Alternatively one could also employ the Discontinous Galerkin scheme, as for example done in \cite{Cheng2008}, but this would require calculating the Laplacian by hand afterwards.
 For our approach we define
\begin{align*}
\Phi_1 :=& \Phi,\\
\Phi_2 :=& \Delta \Phi = \Delta \Phi_1,\\
\Phi_3 :=& \Delta \Delta \Phi = \Delta \Phi_2.
\end{align*}
Inserting these definitions into the PDE yields the system
\begin{align}
\label{PDE1}
\alpha_0 \Phi_1 - \alpha_1 \Delta \Phi_1 + \alpha_2 \Delta \Phi_2 - \alpha_3 \Delta \Phi_3 = \phi_h \text{~in~} \Omega,\\
\label{PDE2}
\Phi_2 - \Delta \Phi_1 = 0 \text{~in~} \Omega,\\
\label{PDE3}
\Phi_3 - \Delta \Phi_2 = 0 \text{~in~} \Omega,\\
\label{BC1}
  \nu^T \nabla(H_{\Phi_1})  = 0 \text{~on~} \partial\Omega,\\
  \label{BC2}
\nu^T(\alpha_2 H_{\Phi_1} - \alpha_3 H_{ \Phi_2} ) = 0 \text{~on~} \partial\Omega,\\
\label{BC3}
\nu^T(\alpha_1 \nabla \Phi_1 - \alpha_2 \nabla \Phi_2 + \alpha_3 \nabla\Phi_3 ) = 0 \text{~on~} \partial\Omega 
\end{align}

In the next step, we will multiply with testfunctions $(\Psi_1, \Psi_2, \Psi_3) \in H^3(\Omega) \times H^2(\Omega) \times H^1(\Omega)$, integrate over $\Omega$ and perform integration by parts on these equation to get a weak formulation.

Starting with \eqref{PDE1}, we receive

\begin{equation}
\begin{split}
\int_\Omega \left(\alpha_0 \Phi_1 - \alpha_1 \Delta \Phi_1 + \alpha_2 \Delta \Phi_2 - \alpha_3 \Delta \Phi_3\right) \Psi_1 \D x 
=
\int_{\Omega} \phi_h \Psi_1 \D  x\\
\begin{split}
\end{split}
 \label{WeakForm1}
\Leftrightarrow
\int_\Omega \alpha_0 \Phi_1 \Psi_1 + \alpha_1 \nabla\Phi_1 \cdot \nabla \Psi_1 - \alpha_2 \nabla \Phi_2 \cdot \nabla \Psi_1 + \alpha_3 \nabla \Phi_3 \cdot \nabla \Psi_1 \D x\\
- 
\underbrace{\int_{\partial\Omega} \nu^T(\alpha_1 \nabla \Phi_1 - \alpha_2 \nabla \Phi_2 + \alpha_3 \nabla\Phi_3 )\Psi_1 \D S}_{=0}
=
\int_{\Omega} \phi_h \Psi_1 \D  x,
 \end{split}
\end{equation} where we can directly incoorporate the boundary condition \eqref{BC3} as a natural boundary condition.

Handling the equations \eqref{PDE2} and \eqref{PDE3} in the same vein, we simply get
\begin{equation}
\label{WeakForm2}
\begin{split}
\int_\Omega (\Phi_2 -\Delta \Phi_1) \Psi_2  \D x = 0\\
\Leftrightarrow
\int_\Omega \Phi_2 \Psi_2 + \nabla \Phi_1 \cdot \nabla \Psi_2 \D x
-
\int_{\partial \Omega}  \left(\nu^T \nabla \Phi_1\right) \Psi_2  \D S = 0
\end{split}
\end{equation} and
\begin{equation}
\label{WeakForm3}
\begin{split}
\int_\Omega (\Phi_3 -\Delta \Phi_2) \Psi_3  \D x = 0\\
\Leftrightarrow
\int_\Omega \Phi_3 \Psi_3 + \nabla \Phi_2 \cdot \nabla \Psi_3 \D x
-
\int_{\partial \Omega}  \left(\nu^T \nabla \Phi_2\right) \Psi_3  \D S = 0.
\end{split}
\end{equation}
We see that we can't directly plug in the boundary conditions \eqref{BC1} and \eqref{BC2} into any of these weak forms. Now it's important to remember we arrived at the strong formulation in \eqref{StrongPDE} by performing integration by parts on the bilinear form in \eqref{BilinearFormH3}. As such we still have leftover integrals for the boundary conditions \eqref{BC2} and \eqref{BC3}. Keeping these integrals in mind when handling the bilinear form \eqref{WeakForm1} and adding the weak forms \eqref{WeakForm2} and \eqref{WeakForm3} we arrive at the problem formulation:
 Find $(\Phi_1, \Phi_2, \Phi_3) \in H^3(\Omega) \times H^2(\Omega) \times H^1(\Omega)$ such that
\begin{align*}
\int_\Omega \alpha_0 \Phi_1 \Psi_1 + \alpha_1 \nabla\Phi_1 \cdot \nabla \Psi_1 - \alpha_2 \nabla \Phi_2 \cdot \nabla \Psi_1 + \alpha_3 \nabla \Phi_3 \cdot \nabla \Psi_1 \D x\\
+
\int_\Omega \Phi_2 \Psi_2 + \nabla \Phi_1 \cdot \nabla \Psi_2 \D x
-
\int_{\partial \Omega}  \left(\nu^T \nabla \Phi_1\right) \Psi_2  \D S\\
+
\int_\Omega \Phi_3 \Psi_3 + \nabla \Phi_2 \cdot \nabla \Psi_3 \D x
-
\int_{\partial \Omega}  \left(\nu^T \nabla \Phi_2\right) \Psi_3  \D S\\
+
 \alpha_3\int_{\partial\Omega} (\nu^T \nabla(H_{\Phi_1})) : H_{\Psi_1}   \D S\\
 +
 \int_{\partial\Omega} (\nu^T(\alpha_2 H_{\Phi_1} - \alpha_3 H_{ \Phi_2} )  \nabla \Psi_1   \D S
=
\int_{\Omega} \phi_h \Psi_1 \D x
\end{align*} for all $(\Psi_1, \Psi_2, \Psi_3) \in H^3(\Omega)\times H^2(\Omega)\times H^1(\Omega)$.

\subsubsection{Complete FEM formulation for the case of $k = 3$}
\label{SectionH3Formulation}
At this point, we need to mention a final modification of our functional $J$ in \eqref{Functional}: In the regularizer $W(\Phi)$, we exchange the term $\alpha_0 \|\Phi\|^2_{L^2(\Omega)}$ with $\alpha_0 \|\Phi - \phi_h\|^2_{L^2(\Omega)}$, i.e. instead of penalizing the distance of our reconstruction to the 0-function, we instead try to minimize the distance to our input function $\phi_h$. As we want our reconstruction $\Phi$ to essentially be a smoother version of $\phi_h$, it stands to reason the 0-level of the functions should be as close as possible. With this modification we make sure our reconstruction has roughly the same isolines as the input function. This modification can be handled analogously as above and we simply get an additional constant $\alpha_0$ at the RHS.

To arrive at the complete formulation for minimizing our functional $J$ we recall the problem formulation in \eqref{PDEMixed} and so the final problem formulation is: For a given $\phi_h \in \mathbb{P}^1_h(\Omega)$, find $(\Phi_1, \Phi_2, \Phi_3, \lambda) \in H^3(\Omega) \times H^2(\Omega) \times H^1(\Omega) \times \mathbb{P}^1_h(\Omega)$ such that
\begin{equation}
\label{CompletePDE}
\begin{split}
\int_\Omega \alpha_0 \Phi_1 \Psi_1 + \alpha_1 \nabla\Phi_1 \cdot \nabla \Psi_1 - \alpha_2 \nabla \Phi_2 \cdot \nabla \Psi_1 + \alpha_3 \nabla \Phi_3 \cdot \nabla \Psi_1 \D x\\
+
\int_\Omega \Phi_2 \Psi_2 + \nabla \Phi_1 \cdot \nabla \Psi_2 \D x
-
\int_{\partial \Omega}  \left(\nu^T \nabla \Phi_1\right) \Psi_2  \D S\\
+
\int_\Omega \Phi_3 \Psi_3 + \nabla \Phi_2 \cdot \nabla \Psi_3 \D x
-
\int_{\partial \Omega}  \left(\nu^T \nabla \Phi_2\right) \Psi_3  \D S\\
 +
 \int_{\partial\Omega} (\nu^T(\alpha_2 H_{\Phi_1} - \alpha_3 H_{ \Phi_2} )  \nabla \Psi_1   \D S\\
 +
\int_{\partial\Omega}  \alpha_3(\nu^T \nabla(H_{\Phi_1})) : H_{\Psi_1}   \D S\\
+
\int_\Omega \lambda \Psi_1 \D x
 +
 \int_{\Omega} (\lambda - \Phi_1)\eta \D x
=
(1+\alpha_0)\int_{\Omega} \phi_h \Psi_1 \D x
\end{split}
\end{equation} for all $(\Psi_1, \Psi_2, \Psi_3, \eta) \in H^3(\Omega)\times H^2(\Omega)\times H^1(\Omega)\times \mathbb{P}^1_h(\Omega)$.

In order to solve the problem numerically, we now have to exchange the Sobolev spaces with the typical FEM ansatz spaces. As these spaces are a subset of $H^1(\Omega)$ it should be noted that we still have to investigate whether our approach needs to be amended with regards to the boundary integrals as higher derivatives are present there and the analytical weak problem requires $H^3(\Omega)$ functions. While we will continue with the $H^1(\Omega)$ FEM spaces, further investigation might be required in a follow-up work.

We have to keep in mind that $\lambda \in \mathbb{P}^1_h(\Omega)$ is given from the analytical problem formulation. We also know $\Phi_1$ must be atleast of polynomial order three and $\Phi_2$ must be of order two, because in our boundary conditions derivatives of third order from $\Phi_1$ and of second order from $\Phi_2$ are present.

In this case we have found a stable choice in $\Phi_1, \Phi_2, \Phi_3 \in \mathbb{P}^3_h(\Omega)$ by heuristical means.

\subsubsection{Complete FEM formulation for the case of $k = 2$}
\label{SectionH2Formulation}
In the case of $H^2$ reconstruction, we will at least state the final problem formulation as:
For a given $\phi_h \in \mathbb{P}^1_h(\Omega)$, find $(\Phi_1, \Phi_2, \lambda) \in H^2(\Omega) \times H^1(\Omega) \times \mathbb{P}^1_h(\Omega)$ such that
\begin{align*}
\int_{\Omega} \alpha_0 \Phi_1 \Psi_1 + \alpha_1 \nabla \Phi_1 \cdot \nabla \Psi_1 - \alpha_2 \nabla \Phi_2 \cdot \nabla \Psi_1 \D x\\
+
\int_\Omega \Phi_2 \Psi_2 + \nabla \Phi_1 \cdot \nabla \Psi_2 \D x
-
\int_{\partial \Omega}  \left(\nu^T \nabla \Phi_1\right) \Psi_2  \D S\\
+\int_{\partial\Omega} \alpha_2 \nu^T H_{\Phi_1}\nabla \Psi_1 \D x\\
+
\int_\Omega \lambda \Psi_1 \D x
+
\int_{\Omega} (\lambda - \Phi_1)\eta \D x
=
(1+\alpha_0)\int_{\Omega} \phi_h \Psi_1 \D x
\end{align*}
  
   for all $(\Psi_1, \Psi_2, \eta) \in  H^2(\Omega)\times H^1(\Omega)\times \mathbb{P}^1_h(\Omega)$.

In this case we have found a stable choice for the numerical ansatz spaces in $\Phi_1, \Phi_2 \in \mathbb{P}^2_h(\Omega)$ as well by heuristical means.

\subsection{Rescaling of problem formulation}

During our investigation we initially found it rather difficult to choose the regularization parameters $\alpha_0, ..., \alpha_3$ correctly, as these regularization parameters have to be adjusted according to the cell size $h$ of the triangulation.

We also encountered a second problem in regards with the cell size: If we look at the weak formulation of the PDE, we have the $0$th to the $3$th derivative of $\Phi$ and $\Psi$ present.  When calculating a matrix entry for the linear system we have to insert basis functions of the $\mathbb{P}^3_h(\Omega)$ space into the weak form. Integral values like $\int_\Omega \alpha_0 \Phi_1\Psi_1   \D x$ will decrease when the step size $h$ decreases, while integral values like $\int_\Omega \alpha_1 \nabla \Phi_1 \cdot \nabla
\Psi_1 \D x$ will increase. If the step size $h$ is smaller or larger than 1 the condition number of the linear system quickly explodes and we can't solve the linear system any more by numerical means.

In order to solve both of these rather practical issues, we rescale our problem formulation: Instead of using the domain $\Omega$ we do all our calculations on a rescaled $\tilde{\Omega}$ which is linked by $\tilde{\Omega} := \left\{r  x \in \mathbb{R}^d: x\in \Omega \right\} $ where $r> 0$ is a scaling parameter. By choosing $r := \frac{1}{h_{max(\Omega)}}$ we ensure that $h_{max}(\tilde{\Omega}) = 1$, which seemed to be a good choice in our experiments. It should be noted however that a different choice for the scaling parameter can be appropriate, e.g. if the mesh is locally refined.

As the next step, we have to transform the right hand size by $\tilde{\phi_h}(\tilde{x}) := \phi_h(\frac{1}{r} \tilde{x}  )$ where $\tilde{x} \in \tilde{\Omega}$.
We then solve the problem \eqref{CompletePDE} on $\tilde{\Omega}$ and receive solutions $\tilde{\Phi}_1, \tilde{\Phi}_2, \tilde{\Phi}_3 \in \mathbb{P}^3_h(\tilde{\Omega})$. Then, taking into account the chain rule for differentiation, we transform back by
\begin{align*}
\Phi_1(x) =& \tilde{\Phi}_1(r x),\\
\Phi_2(x) =& r^2\tilde{\Phi}_2(r x),\\
\Phi_3(x) =& r^4\tilde{\Phi}_3(r x),\\
\end{align*} for all $x \in \Omega$.

\section{Numerical experiments}
\subsection{Preliminaries}
\label{SectionNumPrelim}
Before we begin with the numerical test, we have to state a few preliminaries. All our tests were conducted on a two-dimensional domain $\Omega \in \mathbb{R}^2$ and for simplicity all have been performed on a square or rectangle mesh.

For the tests themselves we use the FEniCS toolbox \cite{LoggEtal2012} or rather our derived inhouse XFEM based toolbox miXFEM \cite{mischa_diss}.

As we saw in the analysis, the reconstruction into $H^2$ suffers from a problematic boundary condition. Therefore almost all of our tests will be performed for the $H^3$ reconstruction.

In the case of the $H^3$ reconstruction there is also a very elegant choice for the regularization parameters $\alpha_0,\hdots, \alpha_3$: We will choose $\alpha:=\alpha_0 = \alpha_3$ and experiment with different for $\alpha$, but set $\alpha_1 = \alpha_2$ very small. This way we want to enfore that the gradient of $\Delta \Phi_1$ is as small as possible and the function $\Phi_1$ is close to $\phi_h$ in a $L^2$-sense, while the values of the Laplacian and gradient of $\Phi_1$ are not punished.
In all $H^3$ calculations we will set $\alpha_1 = \alpha_2 = 10^{-14}$.

In the case of $H^2$ reconstruction we will simply set $\alpha_0 = \alpha
_1 = \alpha_2$ and try different values for those.

When comparing the reconstruction to the exact Laplacian, we will need to project the exact Laplacian onto $\mathbb{P}_h^m(\Omega)$ using the projection operator defined in \ref{OperatorDefinition}.

Then the error calculations will be performed numerically between the projected exact Laplacian of the exact function and our reconstructed Laplacian.

Unless otherwise noted, we will perform an $H^3$ reconstruction.

We will also compare our reconstruction to a sort of weak Laplacian $\mathfrak{f}_h$: Let $\Phi_e \in C^\infty(\Omega)$ be abritrary, then there exists a function $\mathfrak{f}\in C^\infty(\Omega)$ such that 
\begin{align}
\label{4Laplace}
\mathfrak{f} = \Delta \Phi_e.
\end{align}
 
 Now if we multiply \eqref{4Laplace} with an abritrary testfunction $\psi \in C^\infty(\Omega)$, intergrate over $\Omega$ and perform integration by parts, we know that $\mathfrak{f}$ fullfills
 \begin{align*}
\int_\Omega \Delta \Phi_e \psi \D x
=
 \int_\Omega \mathfrak{f} \psi  \D x\\
 \Leftrightarrow
- \int_\Omega \nabla \Phi_e \cdot \nabla \psi \D x
+
\int_{\partial \Omega}(\nu^T \nabla \Phi_e)\psi   \D S
=
 \int_\Omega \mathfrak{f} \psi  \D x.
\end{align*}
We will now use this as a problem formulation where $\Phi_e$ is given and we want to search for an $\mathfrak{f}$ fulfilling this equation for every test function. Therefore, we discretize and try to find an FEM function $\mathfrak{f}_h \in \mathbb{P}^m_h(\Omega)$ such that
\begin{align}
\label{WeakMethodDef}
 \int_\Omega \mathfrak{f}_h \psi_h  \D x 
 =
 - \int_\Omega \nabla \phi_h \cdot \nabla \psi_h \D x
+
\int_{\partial \Omega}(\nu^T \nabla \phi_h)\psi_h   \D S 
\end{align} for all $\psi_h \in \mathbb{P}^m_h(\Omega)$. We will use this weak method with $f_h, \phi_h \in \mathbb{P}_h^m(\Omega)$, $m = 1,2$.

A similar method, albeit with a discontinous Galerkin scheme, has for example been employed in \cite{MARCHANDISE2007949}.

\subsection{Laplacian test case}
First, we consider the simple task of finding the Laplacian of a smooth function $\Phi_e \in C^\infty(\Omega)$, where we only know its piecewise linear projection $\phi_h := A_1 \Phi_e$. Here, we will choose $\Omega := (-11,11)^2$ and the function
\begin{align*}
\Phi_e = \sin\left(\frac{1}{\sqrt{2}} x\right)\sin\left(\frac{1}{\sqrt{2}} y\right).
\end{align*}
One can easily calculate $\Phi_e$ fulfills $-\Delta \Phi_e = \Phi_e$.

\begin{figure}[h!]
\centering
\begin{subfigure}{.5\textwidth}
  \centering
  \includegraphics[width=.9\linewidth]{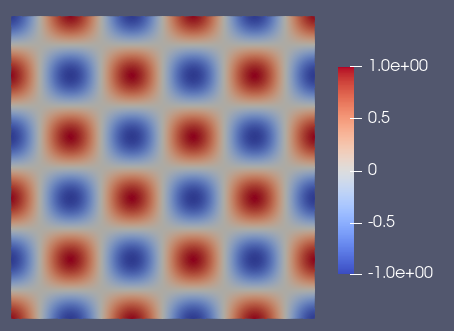}
    \caption{Exact Laplacian}
    \end{subfigure}%
  \hfill
\begin{subfigure}{.5\textwidth}
  \centering
  \includegraphics[width=.9\linewidth]{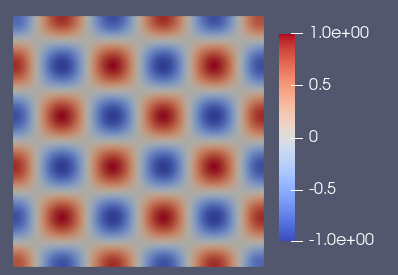}
  \caption{$\alpha = 10$}
\end{subfigure}%

\begin{subfigure}{.5\textwidth}
  \centering
  \includegraphics[width=.9\linewidth]{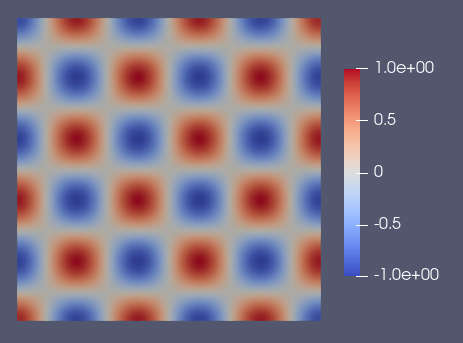}
  \caption{$\alpha = 0.01$}
\end{subfigure}%
\hfill
\begin{subfigure}{.5\textwidth}
  \centering
  \includegraphics[width=.9\linewidth]{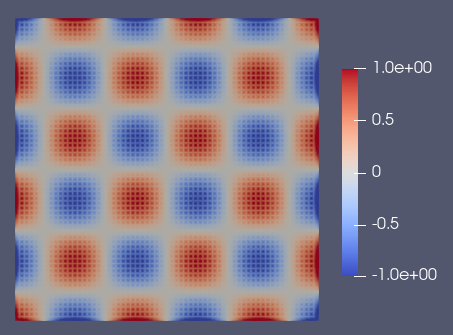}
  \caption{$\alpha = 1e-5$}
\end{subfigure}%

\caption{Tikhonov Reconstruction for Laplacian test case}
\label{fig:TikhLaplace}
\end{figure}

In Figure \ref{fig:TikhLaplace} we can see what effect the values of the different regularization parameter $\alpha$ has. If a large value for $\alpha$ has been chosen, then we get a smoothing or flattening effect towards the boundary, while a very small value results in artifacts. A moderate value smooths out these artifacts while not diminishing the reconstruction properties towards the boundary as hard as a large value for $\alpha$ would.

\begin{figure}[h!]
  \centering
  \includegraphics[width=.7\linewidth]{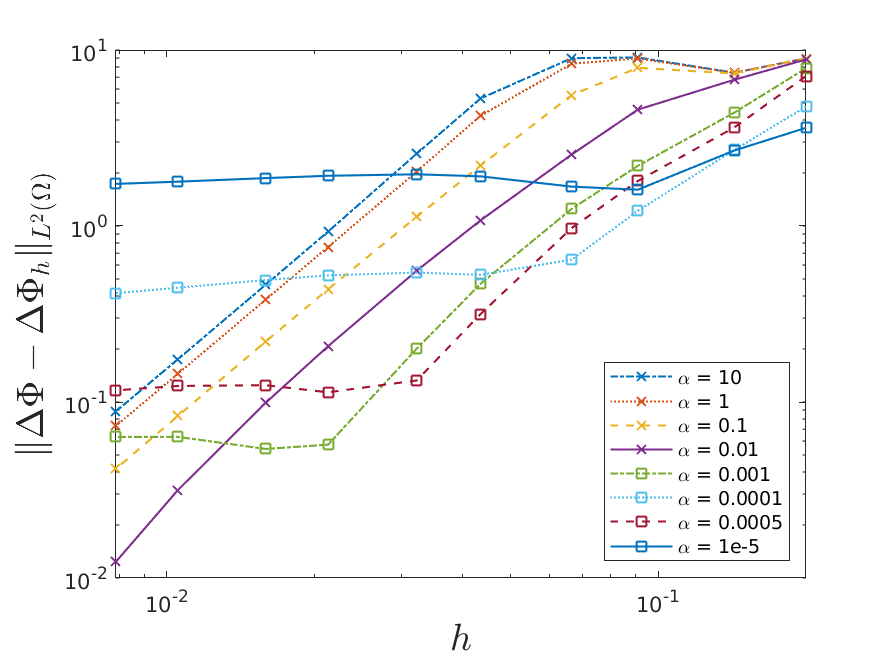}
  \caption{Error graphs for Tikhonov based reconstruction}
  \label{fig:TikhLaplaceConverg}
\end{figure}

Looking at Figure \ref{fig:TikhLaplaceConverg}, we observe how the error term $\|\Phi_2 - A_3(\Delta \Phi_e)\|_{L^2(\Omega)}$ behaves in dependance of the step size $h$ and the regularization parameter $\alpha$. We can make the following observations:
Similar to the visual results in Figure \ref{fig:TikhLaplace}, there seems to be a sweetspot for $\alpha$, where the convergence is optimal. Is $\alpha$ chosen too small the reconstruction artifacts negatively affect convergence. Is $\alpha$ too high the smoothing towards the boundary increases the error. If $\alpha$ is chosen well, we can achieve error-convergence of second polynomial order.

\begin{figure}[h!]
\begin{subfigure}{.5\textwidth}
  \centering
  \includegraphics[width=.9\linewidth]{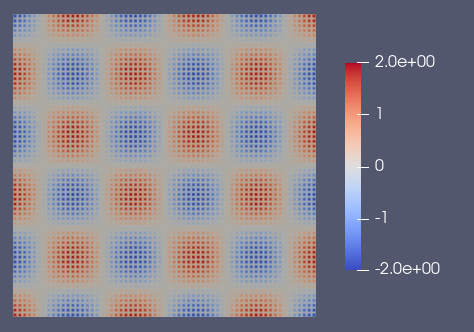}
  \caption{Weak Laplacian}
\end{subfigure}%
\hfill
\begin{subfigure}{.5\textwidth}
  \centering
  \includegraphics[width=.9\linewidth]{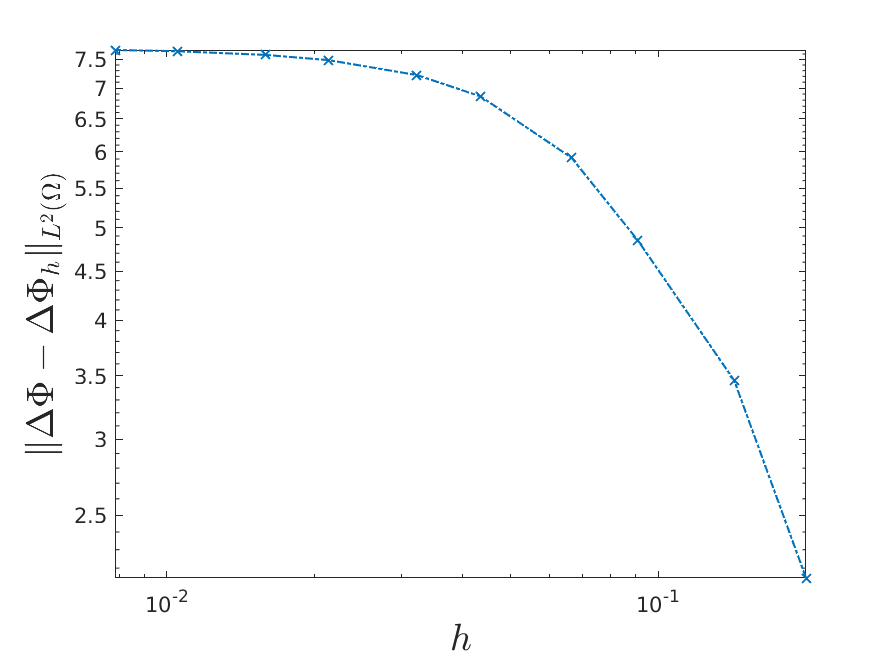}
  \caption{Errors}
\end{subfigure}%

\caption{Weak Laplacian with $\phi_h$ as a piecewise linear function}
\label{fig:WeakLaplaceCG1}
\end{figure}

As we can see in Figure \ref{fig:WeakLaplaceCG1}, the weak method doesn't convergence at all when we restrict ourselves to $\phi_h \in \mathbb{P}^1_h(\Omega)$ and the solution is riddled with artifacts. 

\begin{figure}[h!]
\begin{subfigure}{.5\textwidth}
  \centering
  \includegraphics[width=.9\linewidth]{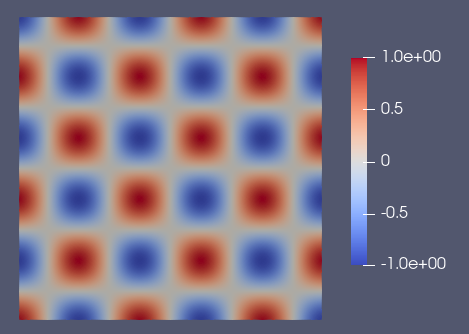}
  \caption{Weak Laplacian}
\end{subfigure}%
\hfill
\begin{subfigure}{.5\textwidth}
  \centering
  \includegraphics[width=.9\linewidth]{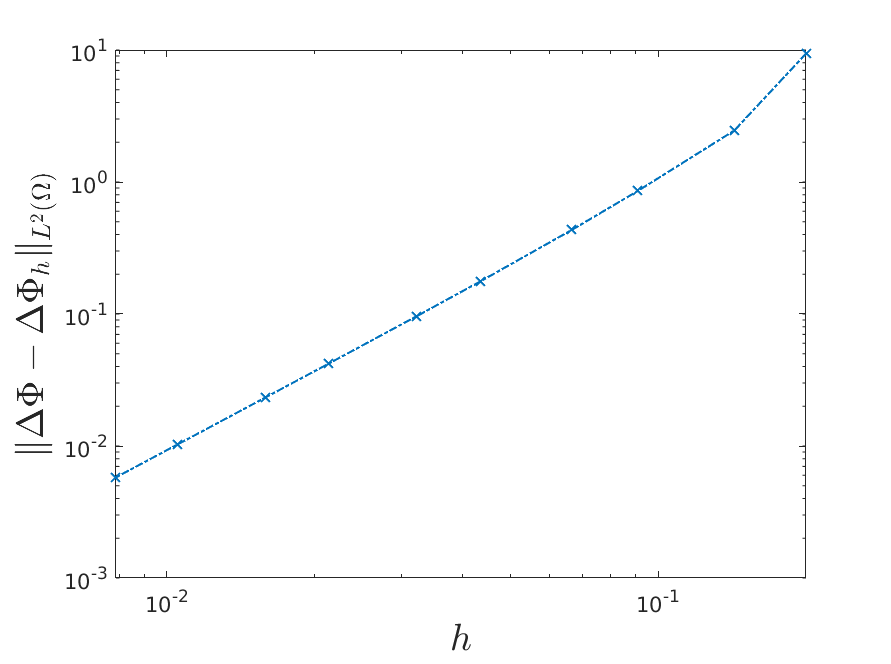}
  \caption{Errors}
  \label{fig:WeakLaplaceCG2Converg}
\end{subfigure}%

\caption{Weak Laplacian with $\phi_h$ as a piecewise quadratic function}
\label{fig:WeakLaplaceCG2}
\end{figure}

It is well understood that we need to increase the polynomial degree of $\phi_h$ and if we try the weak method with $\phi_h \in \mathbb{P}^2_h(\Omega)$, we get convergence of second polynomial order, as we can see in Figure \ref{fig:WeakLaplaceCG2}.

Comparing the error Figures \ref{fig:TikhLaplaceConverg} and \ref{fig:WeakLaplaceCG2Converg} shows us that our Tikhonov based reconstruction  can reconstruct the Laplacian from a piecewise linear function $\phi_h$ almost as good as the weak method from a piecewise quadratic function $\phi_h$, provided we choose the regularization parameter $\alpha$ well. 

So, in this section we have seen we can achieve comparable error convergence for the curvature by using a level set function of one degree lower than in the case of the weak method.

In the next section we will now see another advantage our method has: Level set functions which occur when solving time dependent two-phase Navier-Stokes equations will need to be constructed numerically from a known interface and the process of doing so introduces discretization errors acting similar to noise. A regularization approach has promise to handle such noise better than the weak method we compare with.

\subsection{Static droplet tests}
\label{SectionStaticDroplet}
In this test case, we will not only solve for the Laplacian of a function but also solve a static two-phase Stokes equation. This problem formulation is very similar to the one in \cite{gross2011numerical} and consists of a simple droplet $\Omega_1 := \{x \in \mathbb{R}^2: \|x\| \leq 0.5\}$ inside the square domain $\Omega = (-1,1)$. 

For this we will solve the stationary incompressible Stokes equation
\begin{equation}
\label{StokesProblem}
\begin{aligned}
&- \Div(\mu_i D(u)) + \nabla p = 0 &&\text{~in~} \Omega_i,\\
&\Div(u) = 0 &&\text{~in~} \Omega_i,\\
&\left[\sigma \right]_\Gamma n_\Gamma = - \tau \kappa n_\Gamma,~ [u] = 0  &&\text{~on~} \Gamma,\\
&u = 0 &&\text{~on~} \partial \Omega.
\end{aligned}
\end{equation}

The interface $\Gamma$ will then consist of a circle with radius $0.5$ centered around the 0-point. A corresponding analytical level set function is $\Phi_e(x) := 0.5 - |x|$. 

As there is no outside force and the interface is a circle the exact solution of the Stokes equation is known as
\begin{align*}
&u = 0,
&p = \left\{\begin{array}{ll} c_0 + \kappa, & x\in \Omega_1 \\
         c_0 , & x\in \Omega_2\end{array}\right. 
\end{align*} with a constant $c_0 \in \mathbb{R}$.
 
In terms of discretization, we employ Taylor-Hood-Elements of order $u\in \mathbb{P}^2_h(\Omega_i)$, $p \in \mathbb{P}^1_h(\Omega_i)$ and the weak form we solve is constructed the same way as in \cite{gross2011numerical}.
 
The discrete level set function $\phi_h$ is given over the entire domain $\Omega$ and its zero level describes the discritized interface $\Gamma_h$. We will ensure, through different means, that $\phi_h$ roughly fulfills $|\nabla \phi_h| = 1$ on $\Gamma_h$. 

In this test setting we can of course just project the analytical level set function $\Phi_e(x) = 0.5 - |x|$ onto the spaces $\mathbb{P}^k_h(\Omega)$, $k = 1,2$. One can easily verify it is in fact a signed distance function. But when calculating the time dependent two phase flow problem, the interface $\Gamma_h$ will usually only be explicitly known in the first time step and be part of the solution for the following time steps. 

In the numerical setting, therefore usually only the interface $\Gamma_h$ is known. Then a level set function that is a signed distance function has to be constructed from the interface. The same problem arises for more complicated starting shapes, where it is easy to calculate \textit{a} level set function but not necessarily easy to find one that also fulfills $|\nabla \phi_h| = 1$.

A simple brute force method can find such a corresponding level set function $\phi_h$ by iterating over each vertex $v_i$ of the triangulation  $\mathcal{T}_h$ and setting $\phi_h(x(v_i)) := \mathrm{sign}(x(v_i))\min_{y \in \Gamma_h} | x(v_i) - y |$, where $x(v_i)$ are the coordinates of the vertex $v_i$. The  $\mathrm{sign}(x(v_i))$ term will simply assign a sign to each position $x\in \Omega$, e.g. $ \mathrm{sign}(x) = 1$ for $x \in \Omega_1$ and  $ \mathrm{sign}(x) = -1$ for $x \in \Omega_2$.

We will refer to such a level set function as a 'Numerical Signed Distance Funtion' in the further text. Now accurately calculating the distances for every single vertex $v_i$ is often superfluous and numerically expensive, as we only need the level set function to fulfill $|\nabla \phi_h| = 1$ around the interface $\Gamma_h$.

A modification can lie in only calculating the exact distances to $\Gamma_h$ on a very narrow band of cells around $\Gamma_h$. Then these values are propagated into the rest of the domain with the Fast Marching Method \cite{Sethian1996AFM}. This is now computally much less expensive than the brute force method described above. We will refer to such a level set function as a 'FMM level set function'.

In the following two phase flow tests we will therefore test our method as well as the weak method described in \eqref{WeakMethodDef} on an exactly given level set, a numerical signed distance function and an FMM level set function.

We will evaluate not only the error norms for the curvature but also the $L^2$-error norms for both the velocity $u$ and the pressure $p$ when solving the static Stokes problem described in \eqref{StokesProblem}. 
The error norm $\|\Delta \phi_h - \kappa\|_{L^2(\Gamma_h)}$ will only be evaluated on the interface $\Gamma_h$, since for the Navier-Stokes equation the curvature is only needed in the surface tension functional along $\Gamma_h$.

\clearpage
\paragraph{Exact Interface}

\begin{figure}[h!]
  \centering
\begin{subfigure}{.5\textwidth}
  \centering
  \includegraphics[width=.9\linewidth]{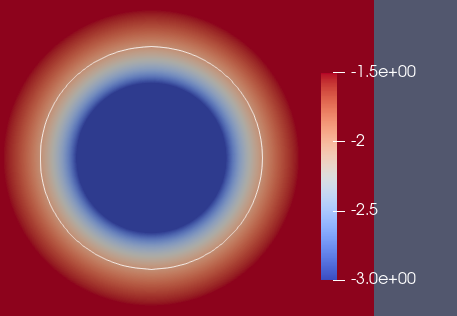}
  \caption{$\alpha = 10$}

\end{subfigure}%
 \hfill
\begin{subfigure}{.5\textwidth}
  \centering
  \includegraphics[width=.9\linewidth]{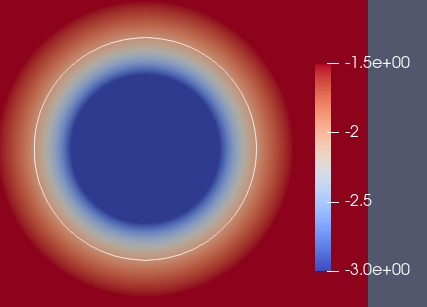}
  \caption{$\alpha = 0.01$}
\end{subfigure}%

\begin{subfigure}{.5\textwidth}
  \centering
  \includegraphics[width=.9\linewidth]{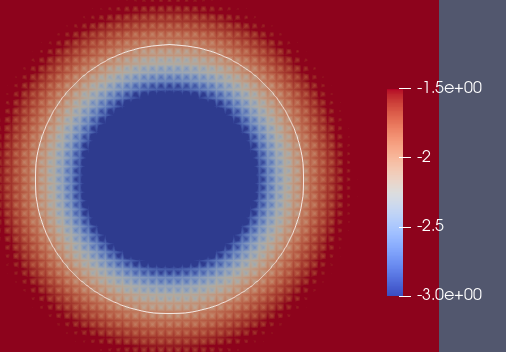}
  \caption{$\alpha = 0.0001$}
\end{subfigure}%
 \hfill
\begin{subfigure}{.5\textwidth}
  \centering
  \includegraphics[width=.9\linewidth]{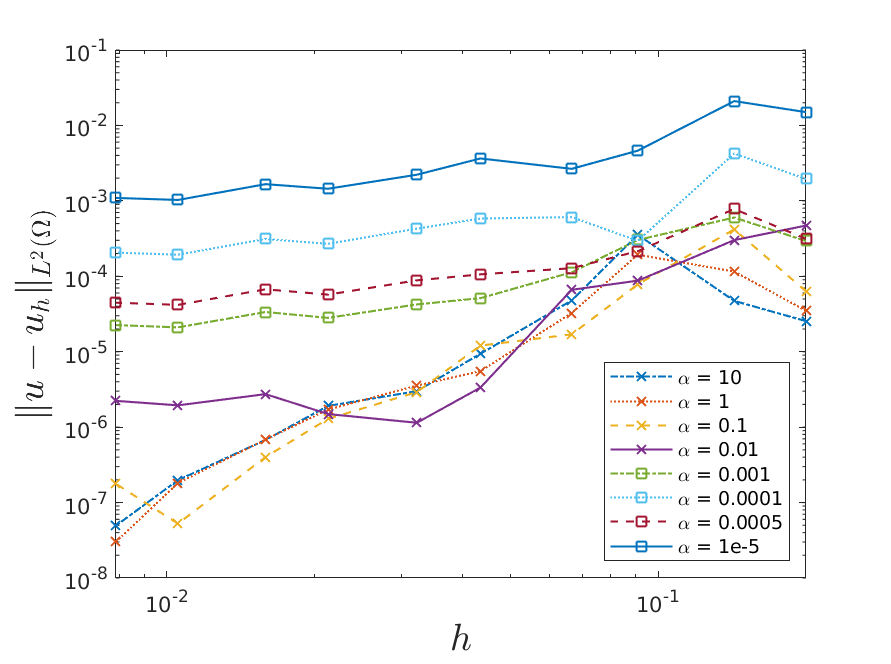}
  \caption{Velocity error}
\end{subfigure}%

\begin{subfigure}{.5\textwidth}
  \centering
  \includegraphics[width=.9\linewidth]{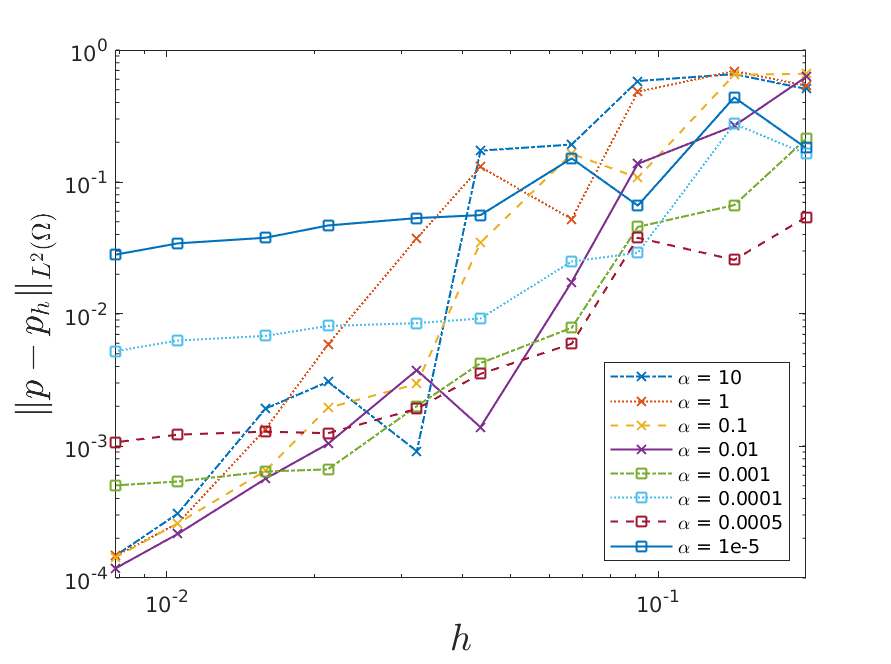}
  \caption{Pressure error}
\end{subfigure}%
 \hfill
\begin{subfigure}{.5\textwidth}
  \centering
  \includegraphics[width=.9\linewidth]{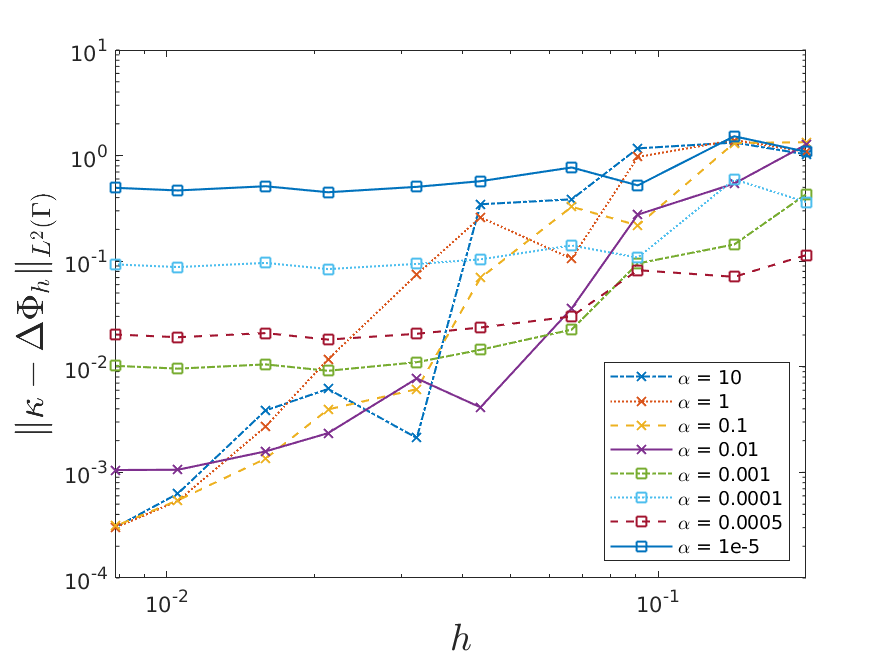}
  \caption{Curvature error}
\end{subfigure}%

\caption{Static droplet results for the exact interface function}
  \label{fig:CenterDropletExact}
\end{figure}
In this first test we will solve the two phase Stokes equation for the static droplet with the projected exact level set function. As we can see on both the resulting images for different $\alpha$ as well as the error graphs in Figure \ref{fig:CenterDropletExact} we are required to set the regularization parameter $\alpha$ above a certain treshold in order to achieve good convergence properties. If $\alpha$ is chosen too small, then we get a lot of artifacts in our reconstruction. Similar to the Laplacian test case, if we compare our Tikhonov based method with the weak method in Figure \ref{fig:WeakDropletExactCG2}, we can see that we can get similar convergence results while reconstructing only from a piecewise linear level set function.

\begin{figure}[h!]
\begin{subfigure}{.5\textwidth}
  \centering
  \includegraphics[width=.9\linewidth]{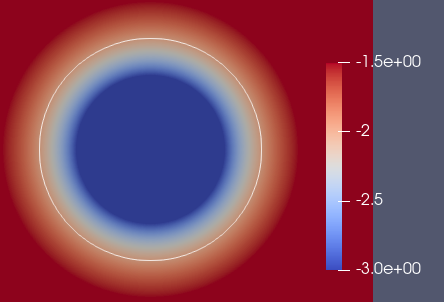}
  \caption{Weak method solution}
\end{subfigure}%
\hfill
\begin{subfigure}{.5\textwidth}
  \centering
  \includegraphics[width=.9\linewidth]{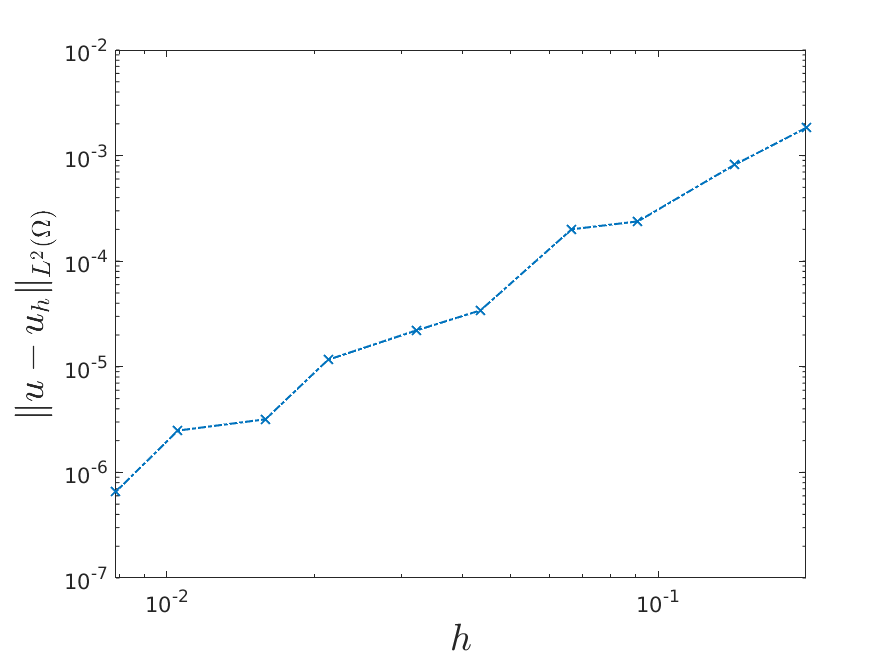}
  \caption{Velocity error}
  \end{subfigure}%
  
  \begin{subfigure}{.5\textwidth}
  \centering
  \includegraphics[width=.9\linewidth]{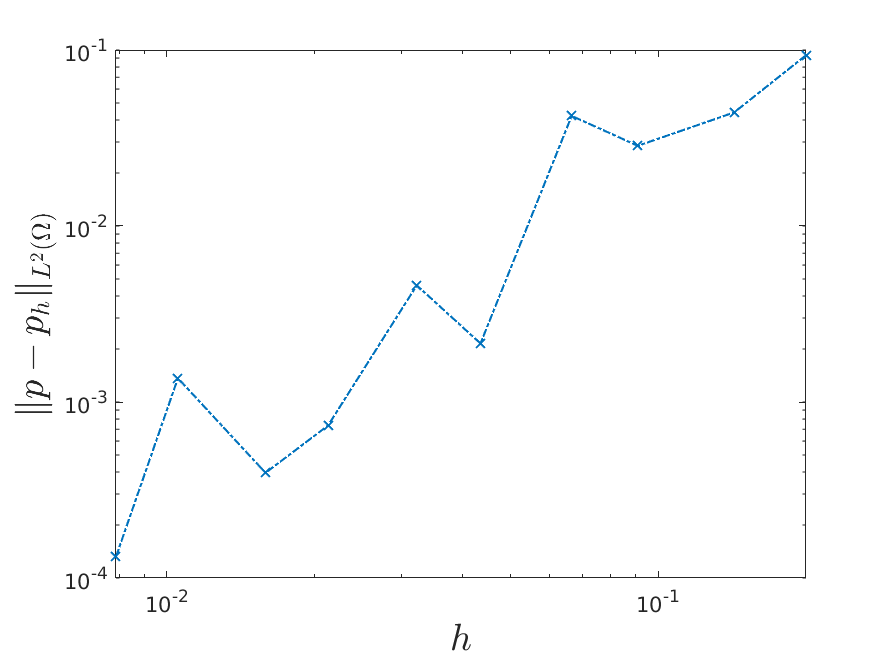}
  \caption{Pressure error}
\end{subfigure}%
\hfill
\begin{subfigure}{.5\textwidth}
  \centering
  \includegraphics[width=.9\linewidth]{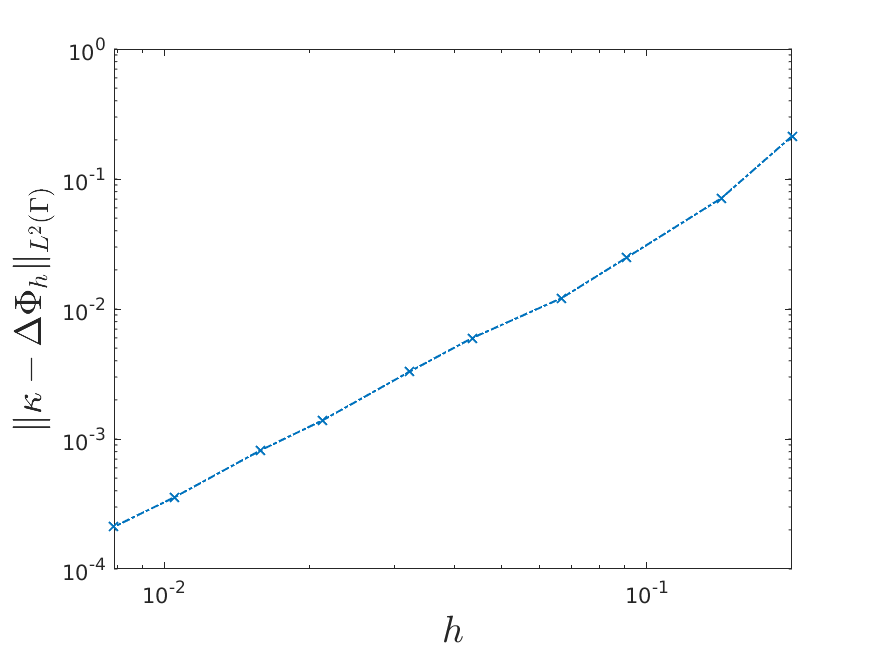}
  \caption{Curvature error}

\end{subfigure}%

\caption{Static droplet weak curvature results with $\phi_h$ as a piecewise quadratic function}
\label{fig:WeakDropletExactCG2}
\end{figure}

\paragraph{Influence of mesh geometry}

During our investigation we found that the geometry of the underlying mesh can have a huge influence on the quality of the Tikhonov reconstruction. In Figures \ref{fig:CrossedGeometry} and \ref{fig:RightGeometry} we can see how the domain $\Omega = (-1,1)^2$ is triangulated by different meshes. We have dubbed the two structured meshes  'Crossed mesh' and 'Diagonal mesh' corresponding to the direction of the diagonals.

\begin{figure}[h!]
  \centering
\begin{subfigure}{.5\textwidth}
  \centering
  \includegraphics[width=.9\linewidth]{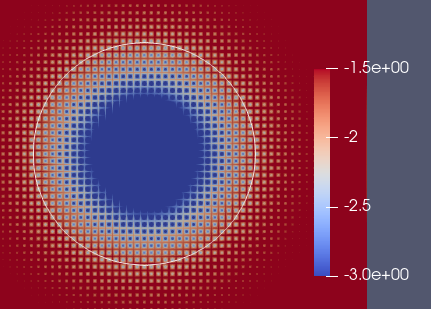}
  \caption{Crossed mesh solution with $\alpha = 1e-5$}
\end{subfigure}%
 \hfill
 \begin{subfigure}{.5\textwidth}
  \centering
  \includegraphics[width=.9\linewidth]{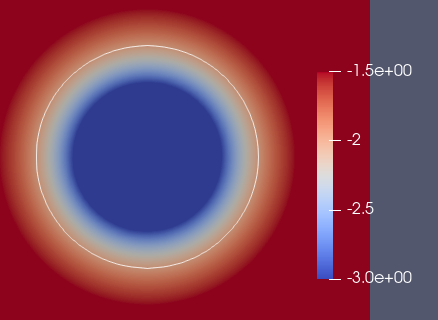}
  \caption{Right mesh solution with $\alpha = 1e-5$}
\end{subfigure}%

\begin{subfigure}{.5\textwidth}
  \centering
  \includegraphics[width=.9\linewidth]{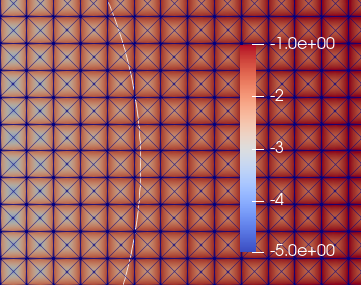}
  \caption{Crossed mesh geometry}
  \label{fig:CrossedGeometry}
\end{subfigure}%
 \hfill
 \begin{subfigure}{.5\textwidth}
  \centering
  \includegraphics[width=.9\linewidth]{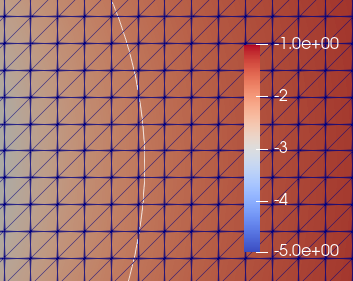}
  \caption{Diagonal mesh geometry}
    \label{fig:RightGeometry}
\end{subfigure}%

 \begin{subfigure}{.5\textwidth}
  \centering
  \includegraphics[width=.9\linewidth]{CenterDroplet/exact/errorK.png}
  \caption{Curvature error for Crossed geometry}
\end{subfigure}%
 \hfill
 \begin{subfigure}{.5\textwidth}
  \centering
  \includegraphics[width=.9\linewidth]{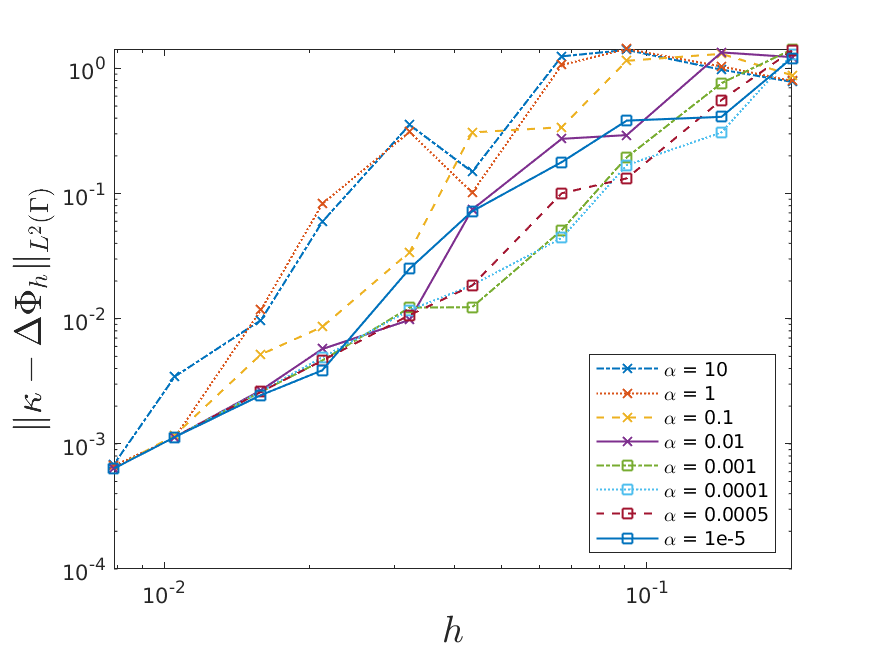}
  \caption{Curvature error for Diagonal geometry}
\end{subfigure}%

\caption{Influence of the mesh geometry on the errors}
\label{fig:Geometry}
\end{figure}

As we can see in the error graphs for the curvature in Figure \ref{fig:Geometry}, the reconstruction converges significantly better on the diagonal mesh compared to the crossed mesh for very low regularization parameters $\alpha$ and the diagonal mesh experiences no artifacts in these cases, unlike the crossed mesh.

Apart from this small demonstration of how the mesh geometry influences the reconstruction quality, we will always calculate on the crossed mesh.

\clearpage
\paragraph{Numerical signed distance function}

In this test case we will now see how the numerical signed distance function performs in terms of the reconstruction quality. The interface $\Gamma_h$ still prescribes a discretized circle with radius $0.5$ centered around $0$ and in fact the interface $\Gamma_h$ is the same as in the projected exact case. While the exactly given interface earlier is a purely academical application for calculating the interface curvature, the numerical signed distance function provides an example mucher closer to a real world application.

Let us start with the weak method in this test case.

\begin{figure}[h!]
\begin{subfigure}{.5\textwidth}
  \centering
  \includegraphics[width=.9\linewidth]{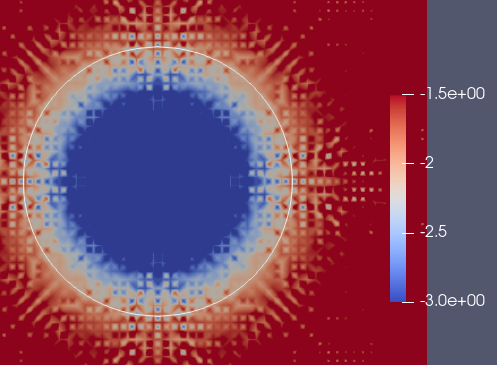}
  \caption{Weak method solution}
\end{subfigure}%
\hfill
\begin{subfigure}{.5\textwidth}
  \centering
  \includegraphics[width=.9\linewidth]{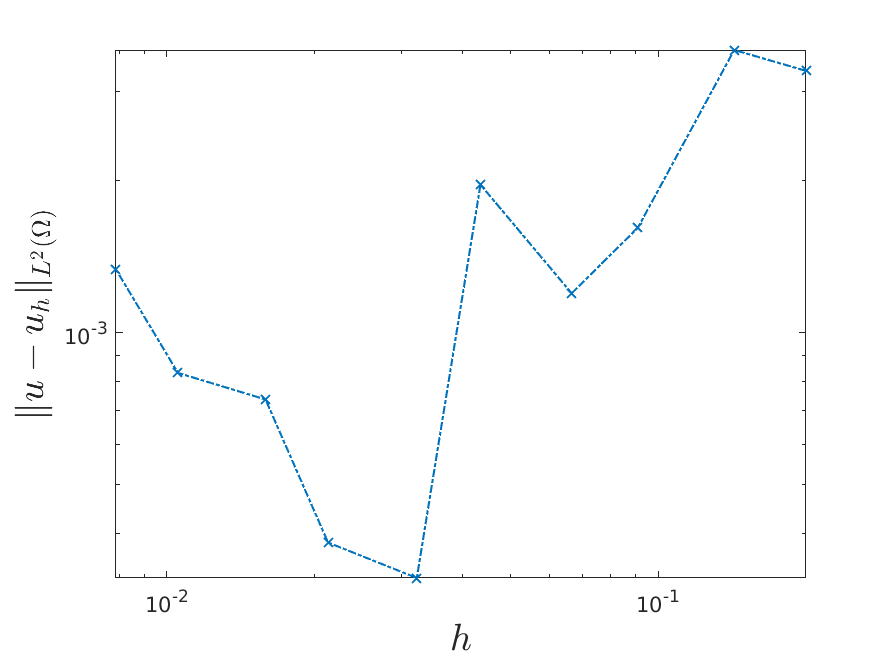}
  \caption{Velocity error}
  \end{subfigure}%
  
  \begin{subfigure}{.5\textwidth}
  \centering
  \includegraphics[width=.9\linewidth]{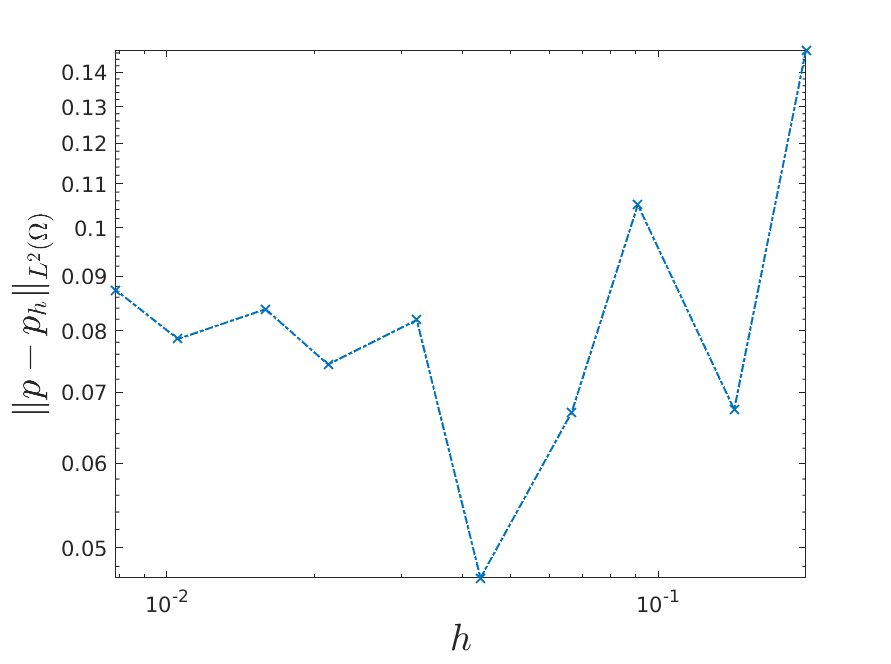}
  \caption{Pressure error}
\end{subfigure}%
\hfill
\begin{subfigure}{.5\textwidth}
  \centering
  \includegraphics[width=.9\linewidth]{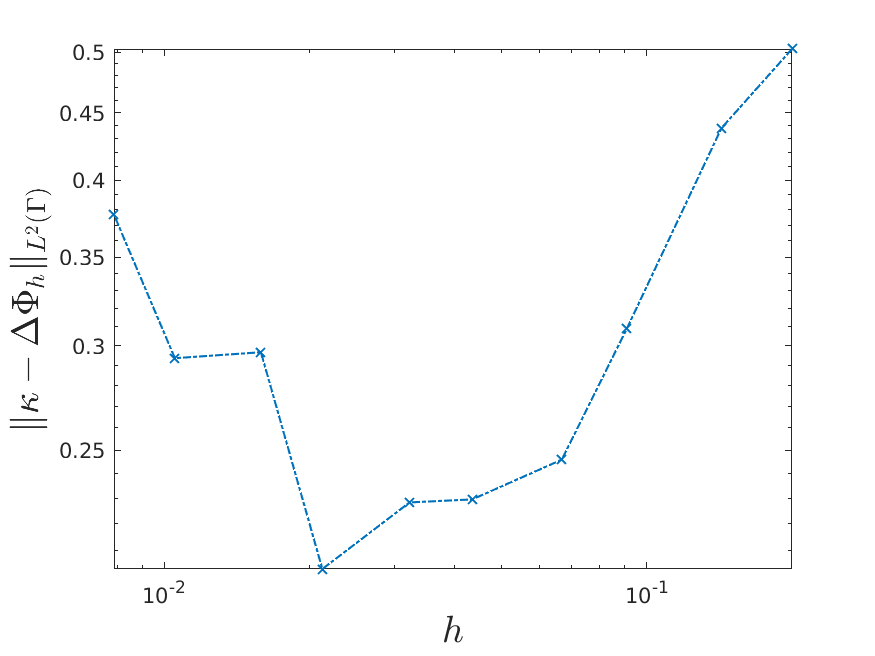}
  \caption{Curvature error}

\end{subfigure}%

\caption{Static droplet weak curvature results with $\phi_h$ as a piecewise quadratic function for the numerical signed distance function}
\label{fig:WeakDropletAllSignedCG2}
\end{figure}

As we can see in Figure \ref{fig:WeakDropletAllSignedCG2}, the weak method's convergence is practically non-existent in this case and the curvature error stays constant or even gets worse if the step size $h$ decreases. In the same figure we can also see that the weak's method solution has significant artifacts unlike in the exact interface case. The reason for this behavior lies in how the numerical signed distance function is constructed. Since we try to minimize the distances to the discretized interface $\Gamma_h$, we get small disturbences in the values compared to the exact signed distance function $\Phi_e$ which minimizes the distances to the exact interface $\Gamma$. It is natural to think of these disturbences as noise added onto the exact level set function. Differentiation is a very ill-posed problem in general and therefore small noise on the data $\phi_h$ will get amplified tremendously when trying to differentiate.
Now let us compare these results to our Tikhonov based $H^3$-reconstruction.

\begin{figure}[h!]
  \centering
\begin{subfigure}{.5\textwidth}
  \centering
  \includegraphics[width=.9\linewidth]{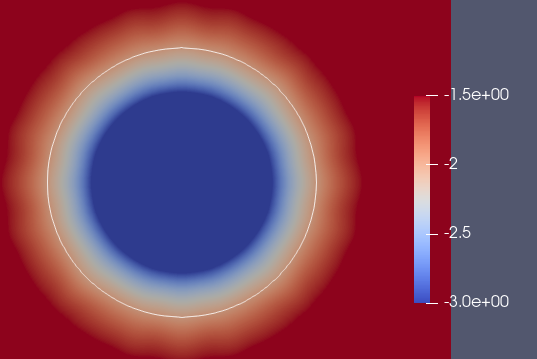}
  \caption{$\alpha = 10$}
\end{subfigure}%
 \hfill
\begin{subfigure}{.5\textwidth}
  \centering
  \includegraphics[width=.9\linewidth]{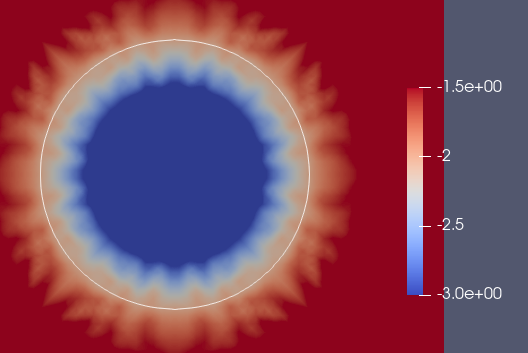}
  \caption{$\alpha = 0.01$}
\end{subfigure}%

\begin{subfigure}{.5\textwidth}
  \centering
  \includegraphics[width=.9\linewidth]{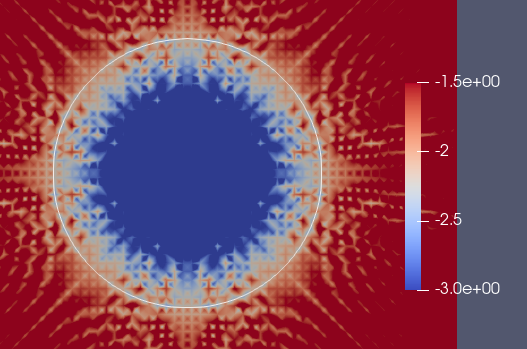}
  \caption{$\alpha = 0.0001$}
\end{subfigure}%
 \hfill
\begin{subfigure}{.5\textwidth}
  \centering
  \includegraphics[width=.9\linewidth]{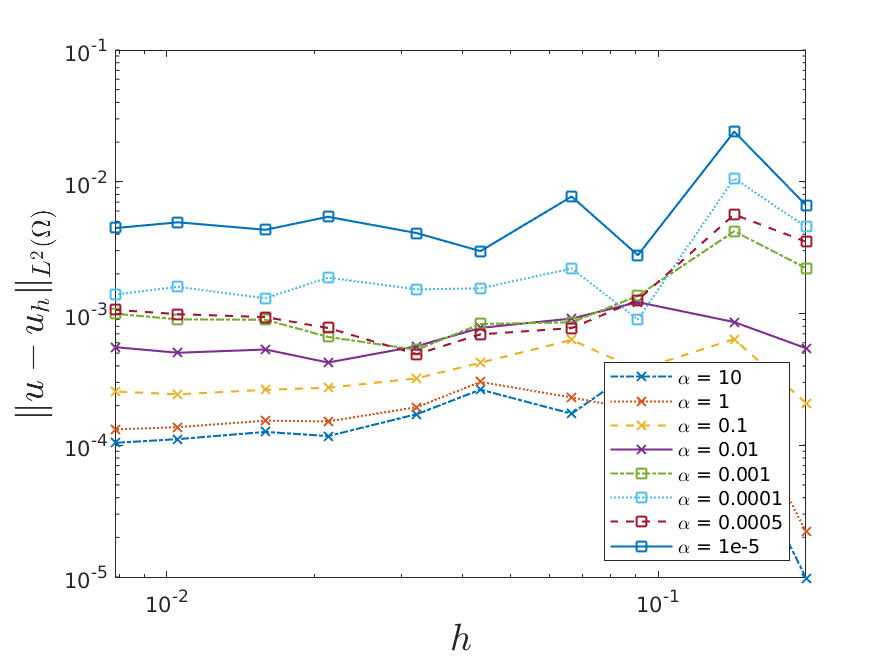}
  \caption{Velocity error}
\end{subfigure}%

\begin{subfigure}{.5\textwidth}
  \centering
  \includegraphics[width=.9\linewidth]{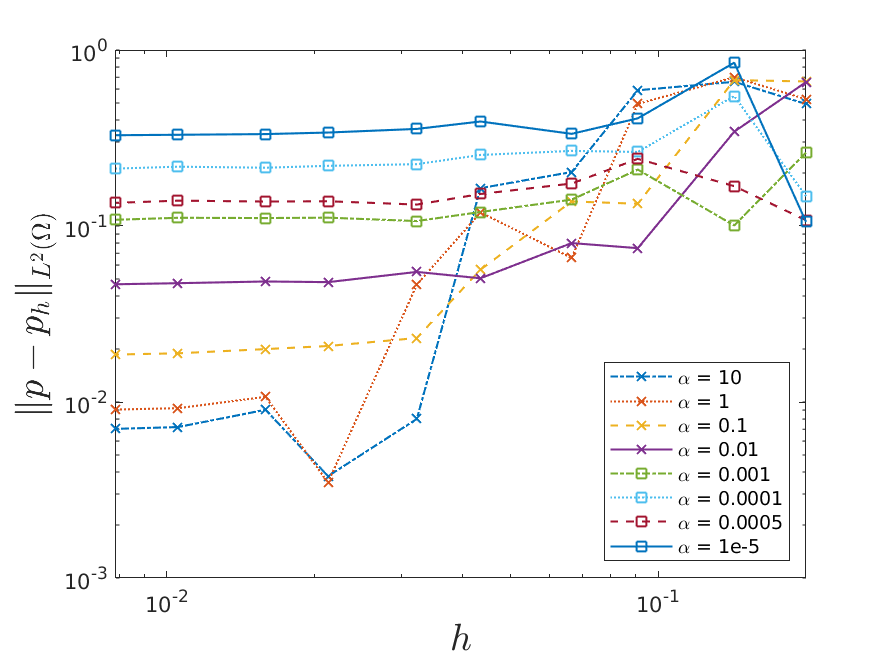}
  \caption{Pressure error}
\end{subfigure}%
 \hfill
\begin{subfigure}{.5\textwidth}
  \centering
  \includegraphics[width=.9\linewidth]{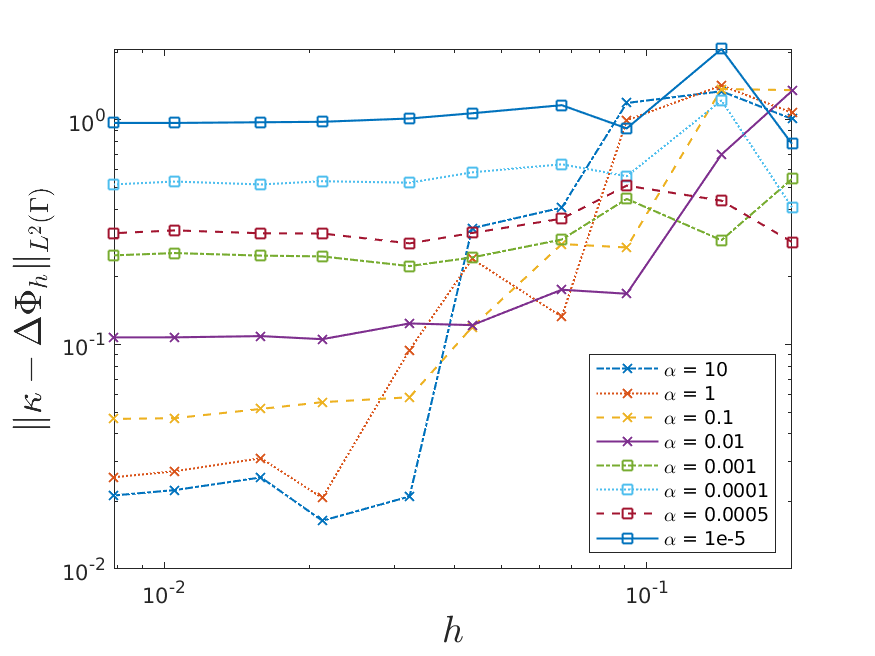}
  \caption{Curvature error}
\end{subfigure}%

\caption{Static droplet results for the numerical signed interface function}
  \label{fig:CenterDropletAllSigned}
\end{figure}

And as we can see in Figure \ref{fig:CenterDropletAllSigned}, we can achieve a much smoother Laplacian for $\phi_h$ piecewise linear compared to the weak method. Unfortunately, we do not get strict convergence for the errors. Instead, we see that the errors (for high $\alpha$) will sharply fall off and then stay constant or just be almost constant from the beginning for a low $\alpha$. Now this behavior can be be explained by how a higher regularization parameter smooths out the noise whereas for a lower $\alpha$, the noise is not regularized away and therefore increases the error term.

\clearpage
\paragraph{Comparison between $H^2$ and $H^3$ reconstruction}

As we mentioned earlier, almost all of our tests were performed with the $H^3$-reconstruction, as defined in Section \ref{SectionH3Formulation}. In this paragraph, we will investigate for the above test case (static droplet decribed with a numerical signed distance function) how the $H^2$-reconstruction, as defined in Section \ref{SectionH2Formulation}, performs.

\begin{figure}[h!]
  \centering
\begin{subfigure}{.5\textwidth}
  \centering
  \includegraphics[width=.9\linewidth]{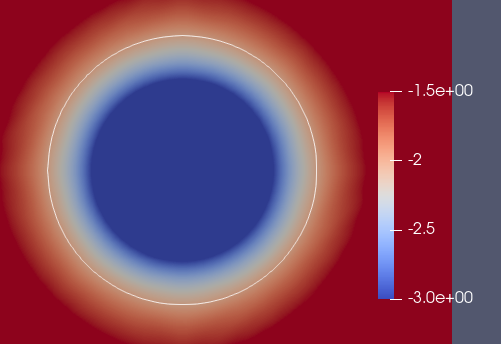}
  \caption{$\alpha = 10$}
\end{subfigure}%
 \hfill
\begin{subfigure}{.5\textwidth}
  \centering
  \includegraphics[width=.9\linewidth]{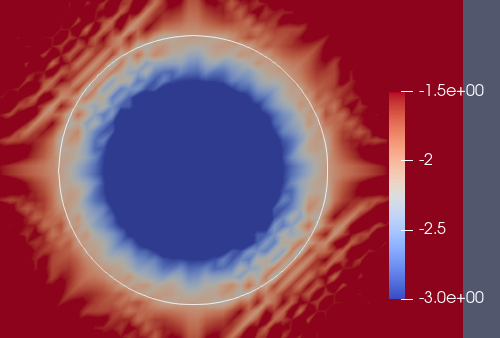}
  \caption{$\alpha = 0.01$}
\end{subfigure}%

\begin{subfigure}{.5\textwidth}
  \centering
  \includegraphics[width=.9\linewidth]{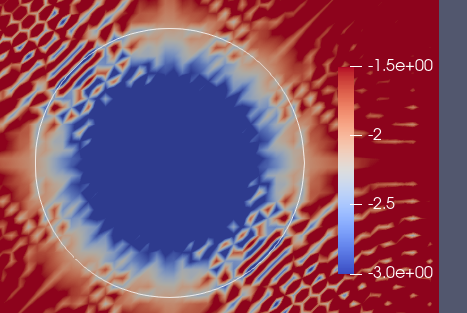}
  \caption{$\alpha = 0.001$}
\end{subfigure}%
 \hfill
\begin{subfigure}{.5\textwidth}
  \centering
  \includegraphics[width=.9\linewidth]{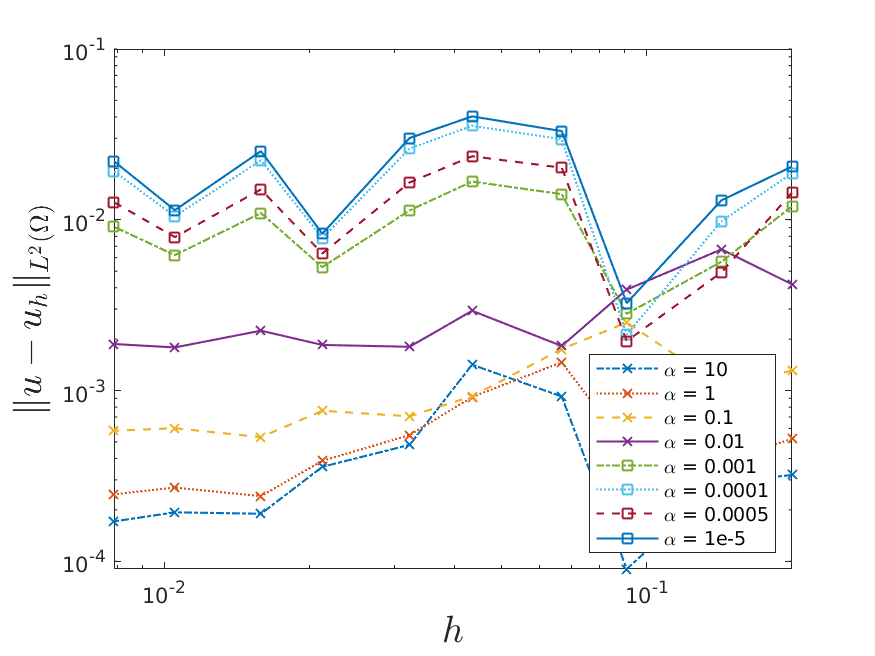}
  \caption{Velocity error}
\end{subfigure}%

\begin{subfigure}{.5\textwidth}
  \centering
  \includegraphics[width=.9\linewidth]{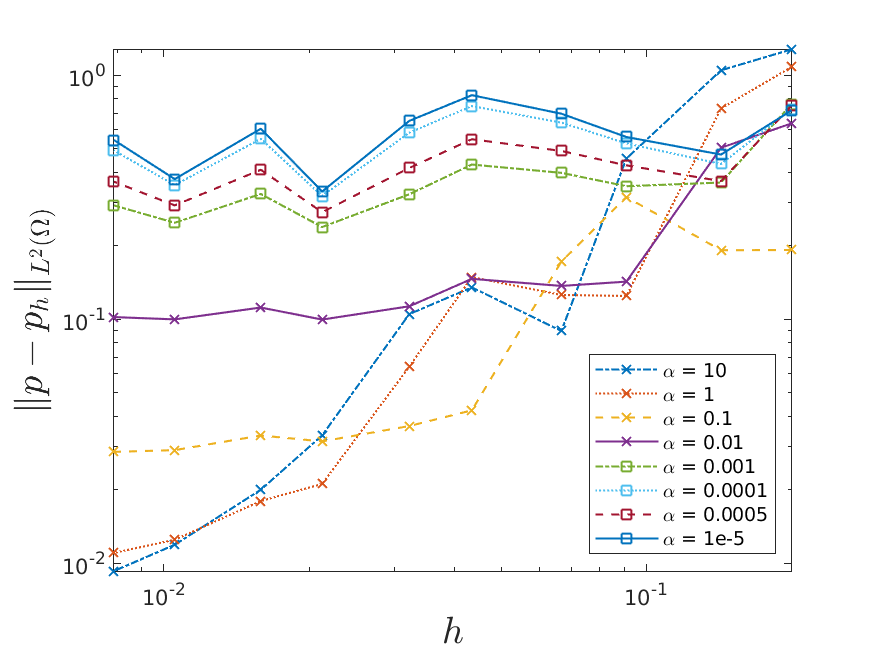}
  \caption{Pressure error}
\end{subfigure}%
 \hfill
\begin{subfigure}{.5\textwidth}
  \centering
  \includegraphics[width=.9\linewidth]{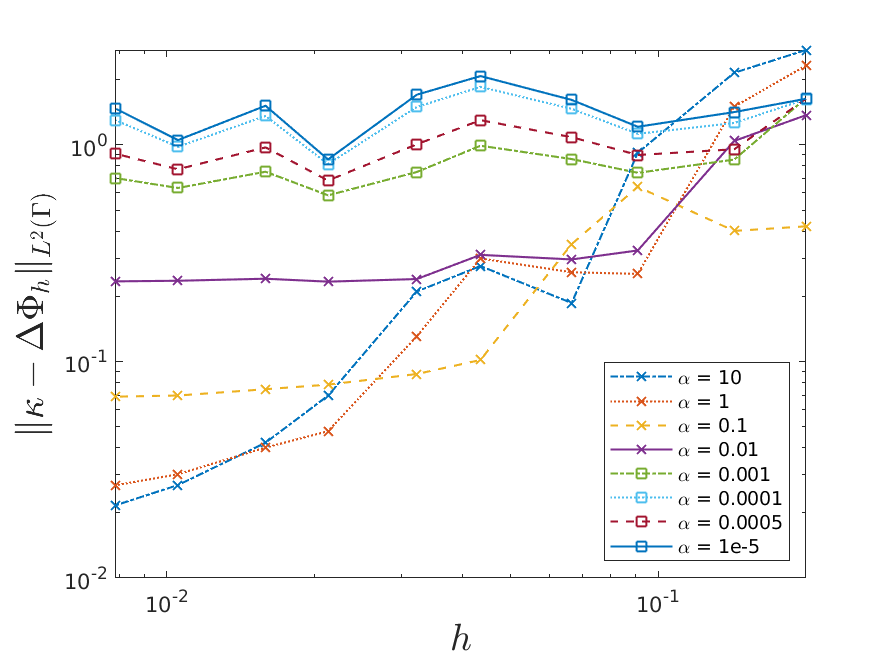}
  \caption{Curvature error}
\end{subfigure}%

\caption{Static dropletresults for the numerical signed interface function with $H^2$ reconstruction}
  \label{fig:CenterDropletH2}
\end{figure}

When we compare the $H^3$ results in Figure \ref{fig:CenterDropletAllSigned} against the $H^2$ results in Figure \ref{fig:CenterDropletH2}, we do not see any significant differences for a very high regularization parameter $\alpha$. For very small $\alpha$, the $H^2$ reconstruction seems to perfom slightly better but still bad, whereas for an intermediate $\alpha$ like $\alpha = 0.1$ or $\alpha = 0.01$ the $H^3$-reconstruction seems to be  favored a bit. But it should be mentioned, that the different way the penalty norms are weighted with $\alpha$, as described in Section \ref{SectionNumPrelim}, does not actually permit a direct comparison for a given $\alpha$.

But over all we do not conclude that either the $H^3$ or the $H^2$  reconstruction works better than the other \textit{in the interior of the domain}.

\begin{figure}[h!]
\begin{subfigure}{.5\textwidth}
  \centering
  \includegraphics[width=.9\linewidth]{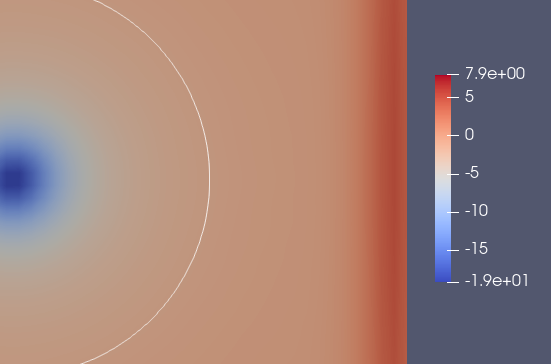}
  \caption{$H^2$ reconstruction with $\alpha = 10$}
\end{subfigure}%
 \hfill
\begin{subfigure}{.5\textwidth}
  \centering
  \includegraphics[width=.9\linewidth]{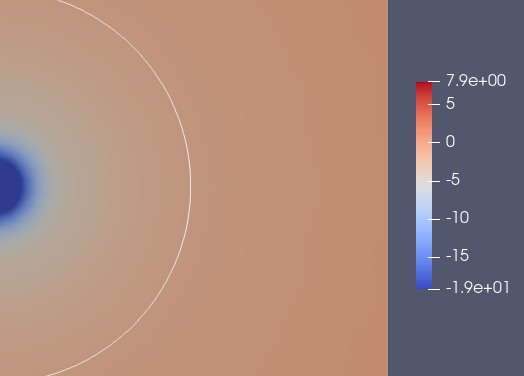}
  \caption{$H^3$ reconstruction with $\alpha = 10$}
\end{subfigure}%
\caption{Behaviour of $H^2$ and $H^3$ reconstruction towards the boundary}
  \label{fig:CenterDropletH2Boundary}
\end{figure}

Now the choice of the color bar in the Figures \ref{fig:CenterDropletExact} and  \ref{fig:CenterDropletH2} did not permit to state anything about the behavior of the solution towards the boundary. We mentioned earlier in Section \ref{SectionChoiceSobolev} that the $H^2$-reconstruction suffers from a problematic boundary condition for the second derivates of $\Phi$. As we can see in Figure \ref{fig:CenterDropletH2Boundary}, where we rescaled the color bar to the highest possible value of the $H^2$-reconstruction, the $H^2$-reconstruction will show a completely different behavior towards the boundary and has a completely wrong value there. In the wetting tests later, we will show the effect of the boundary behavior on the errors for the curvature, velocity and pressure.

\paragraph{FMM level set function}

Now we will present the results for the FMM level set function. While earlier convergence results on the exact level set function are promising, they are purely of academic interest. The results we presented for the numerical signed distance function and the results we are to show here for the FMM level set function are much closer to a real life application. 

\begin{figure}[h!]
  \centering
\begin{subfigure}{.5\textwidth}
  \centering
  \includegraphics[width=.9\linewidth]{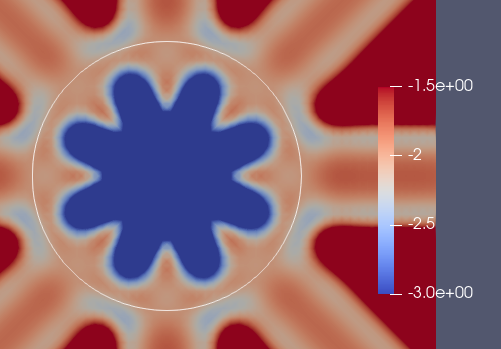}
  \caption{$\alpha = 10$}

\end{subfigure}%
 \hfill
\begin{subfigure}{.5\textwidth}
  \centering
  \includegraphics[width=.9\linewidth]{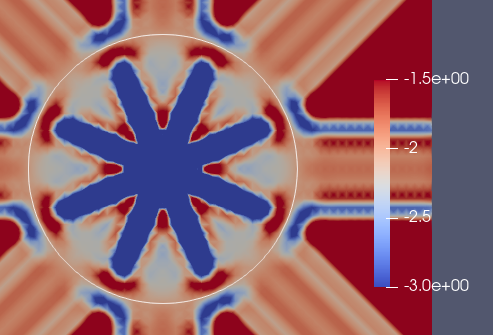}
  \caption{$\alpha = 0.01$}
\end{subfigure}%

\begin{subfigure}{.5\textwidth}
  \centering
  \includegraphics[width=.9\linewidth]{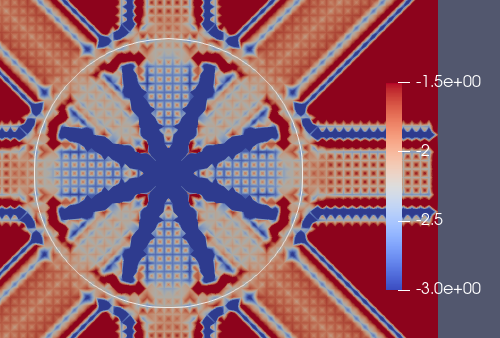}
  \caption{$\alpha = 0.0001$}
\end{subfigure}%
 \hfill
\begin{subfigure}{.5\textwidth}
  \centering
  \includegraphics[width=.9\linewidth]{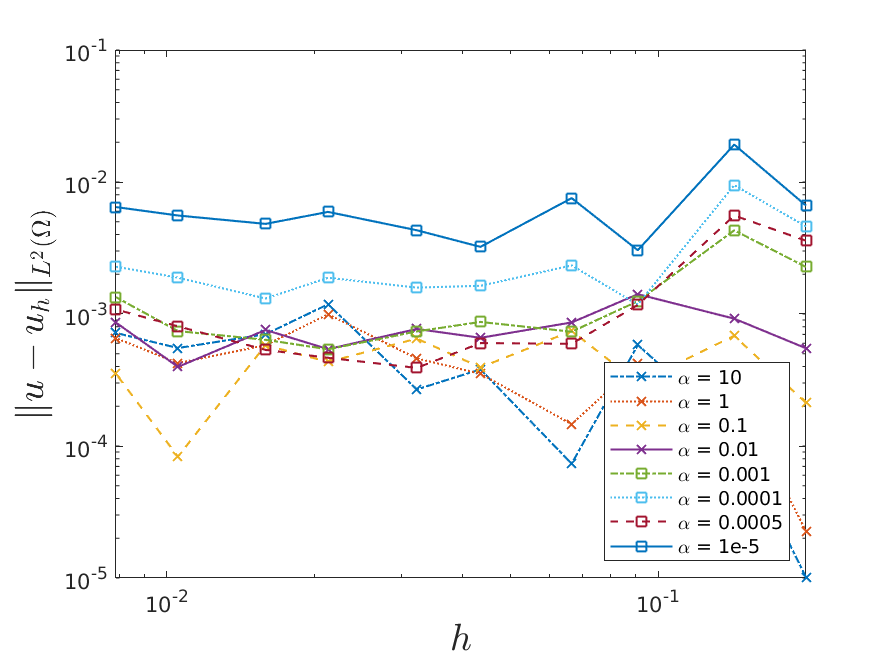}
  \caption{Velocity error}
\end{subfigure}%

\begin{subfigure}{.5\textwidth}
  \centering
  \includegraphics[width=.9\linewidth]{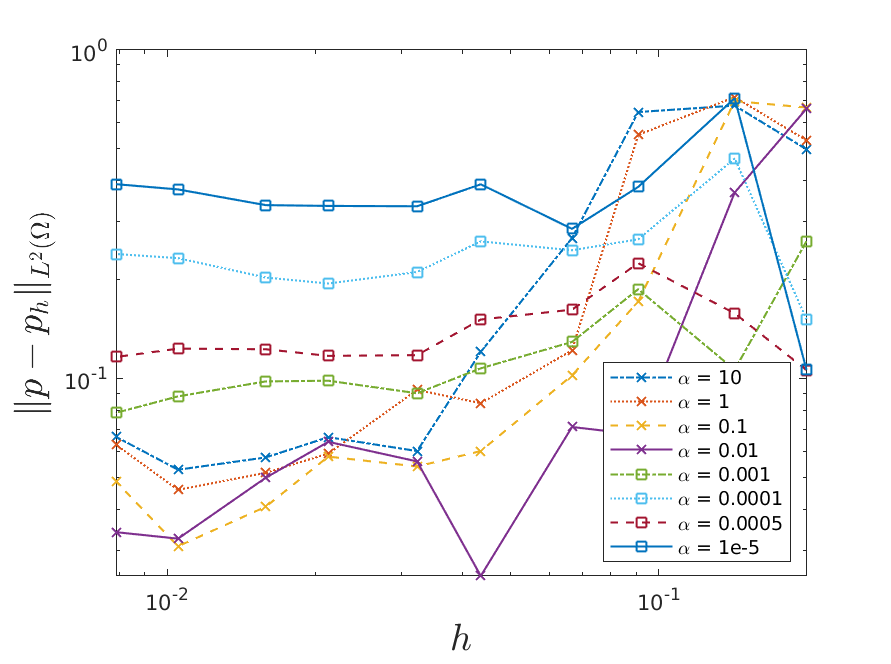}
  \caption{Pressure error}
\end{subfigure}%
 \hfill
\begin{subfigure}{.5\textwidth}
  \centering
  \includegraphics[width=.9\linewidth]{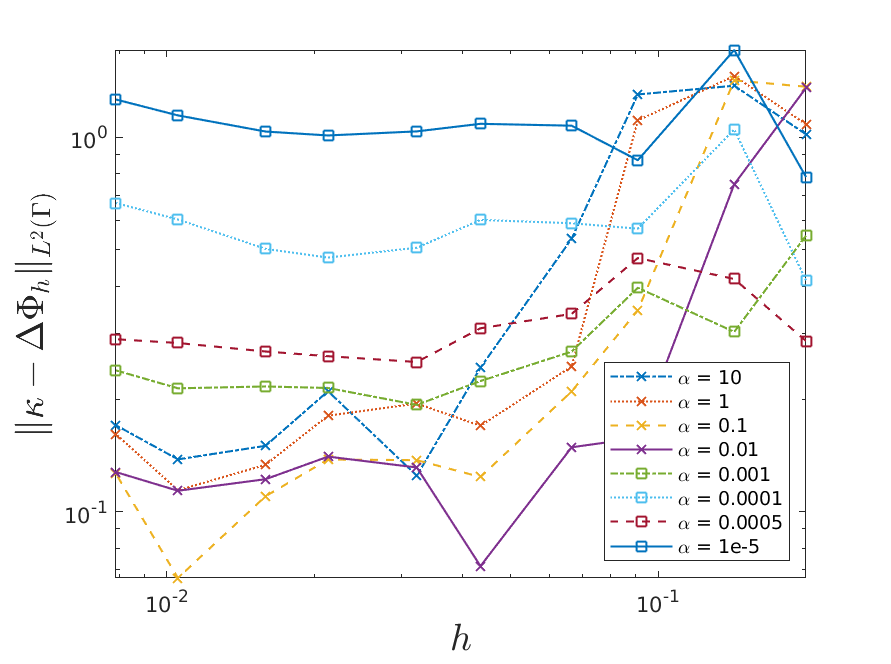}
  \caption{Curvature error}
\end{subfigure}%
\caption{Static droplet results for the FMM interface function}
  \label{fig:CenterDropletFMM}
\end{figure}

As we can see in Figure \ref{fig:CenterDropletFMM}, the error behavior is even worse now than for the numerical signed distance function in Figure \ref{fig:CenterDropletAllSigned}. This should come as no surprise, as the FMM level set function fulfills the signed distance property only on cells surrounding the interface $\Gamma_h$.

\begin{figure}[h!]
  \centering
    \includegraphics[width=.7\linewidth]{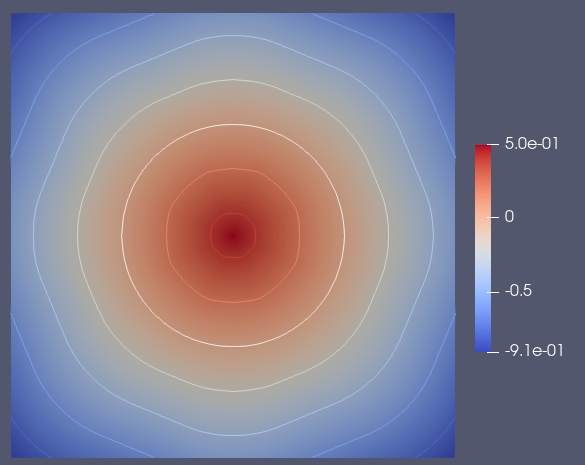}
      \caption{Isolines for FMM level set function}
      \label{fig:FMMIsolines}
\end{figure}

The appearance of the solution is a direct results of the FMM algorithm. As we can see in Figure \ref{fig:FMMIsolines}, the isolines of the FMM level set function do not form concentric circles like for the exact level set function. Instead the isolines further away from the zero level have the appearance of smoothed polygons. As such the curvature suddenly jumps between the flat parts and the smoothed corners of the level set function.

\begin{figure}[h!]
\begin{subfigure}{.5\textwidth}
  \centering
  \includegraphics[width=.9\linewidth]{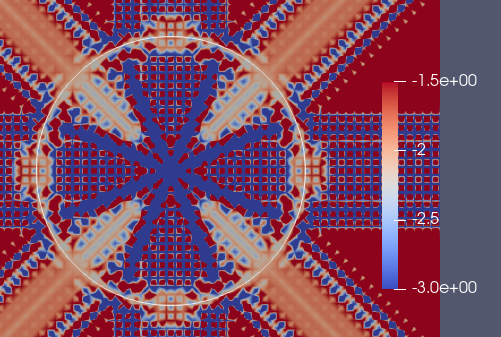}
  \caption{Weak method solution}
\end{subfigure}%
\hfill
\begin{subfigure}{.5\textwidth}
  \centering
  \includegraphics[width=.9\linewidth]{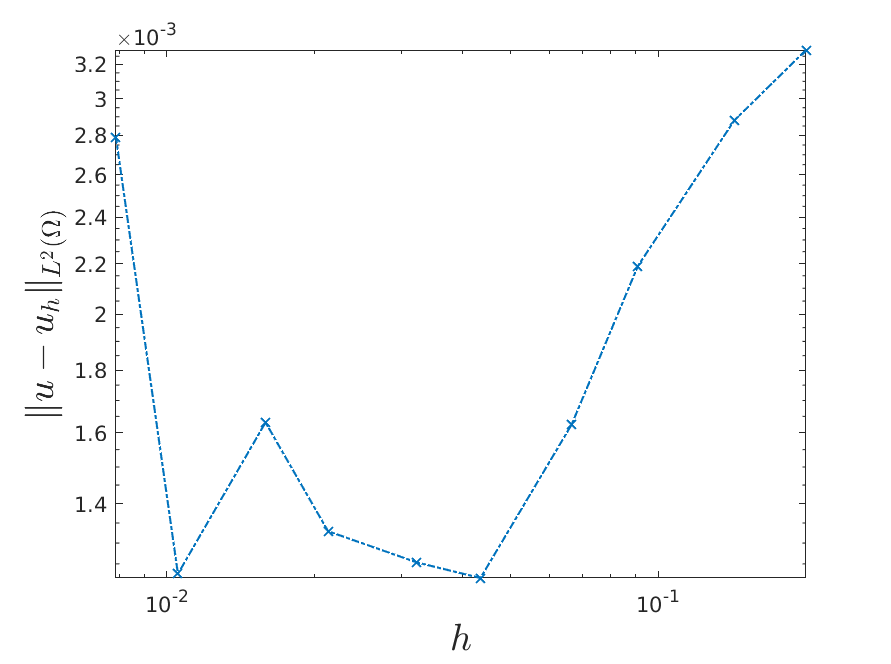}
  \caption{Velocity error}
  \end{subfigure}%
  
  \begin{subfigure}{.5\textwidth}
  \centering
  \includegraphics[width=.9\linewidth]{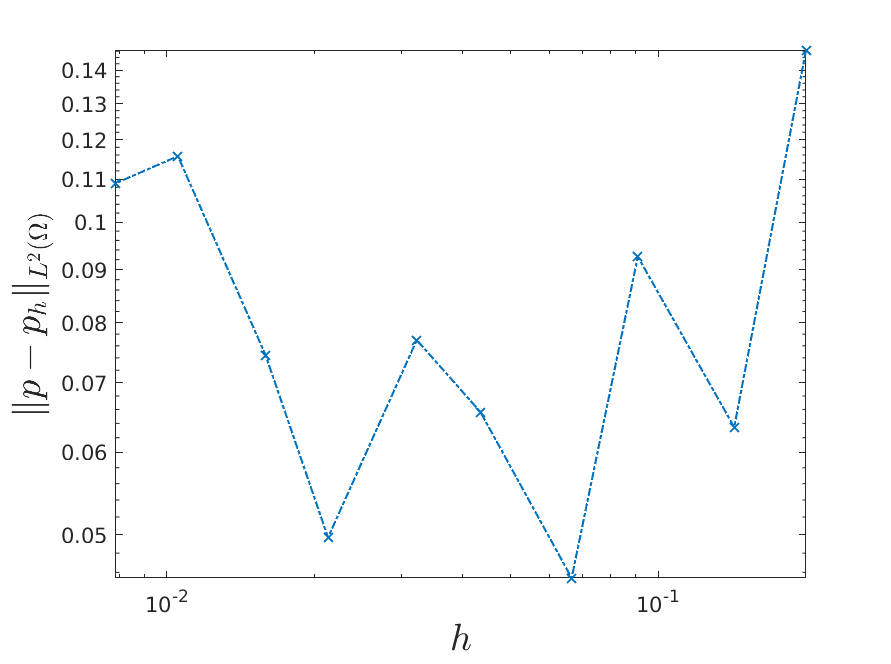}
  \caption{Pressure error}
\end{subfigure}%
\hfill
\begin{subfigure}{.5\textwidth}
  \centering
  \includegraphics[width=.9\linewidth]{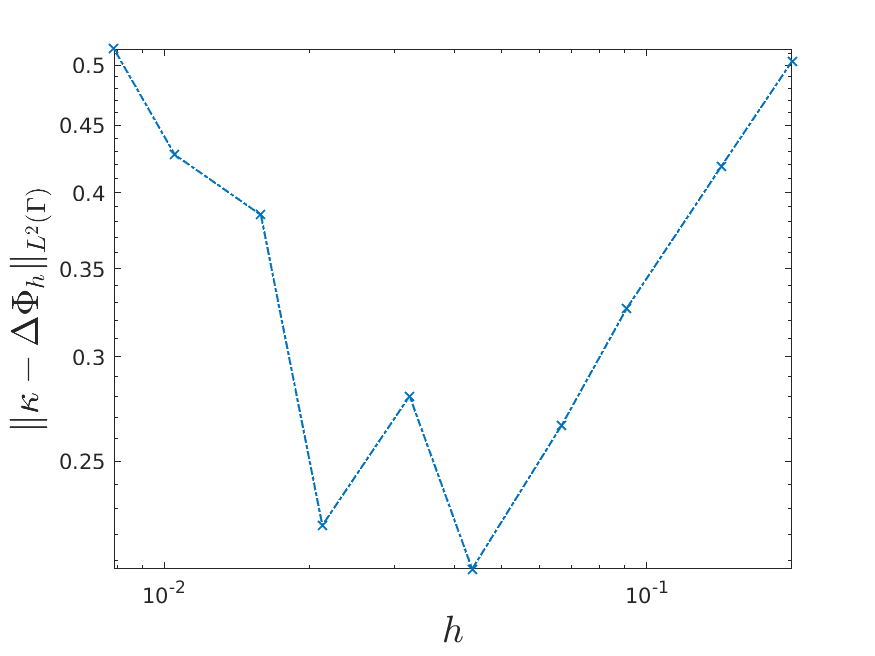}
  \caption{Curvature error}

\end{subfigure}%

\caption{Static droplet weak curvature results with $\phi_h$ as a piecewise quadratic function}
\label{fig:WeakDropletFMMCG2}
\end{figure}

And as we can see in Figure \ref{fig:WeakDropletFMMCG2} the weak method performs very poorly as well on such a level set function. The results are slightly worse as well than in the numerical signed distance case.

\clearpage
\subsection{Wetting tests}

The wettings tests prescribed in this sub-section will only differ in two small differences to the static droplet tests in Section \ref{SectionStaticDroplet}:

First, we will prescribe the exact level set function $\Phi_e$ as 
\begin{align*}
\Phi_e(x) := 0.5 - |x - c|,
\end{align*} with $c \in \mathbb{R}^2$, i.e. our interface $\Gamma$ is a circle with radius $0.5$ with center $c$. We will choose $c$ such that $\Gamma$ intersects $\partial \Omega$. 

The second change lies in the boundary condition for the velocity. Instead of a homogenous Dirichlet boundary condition, we will formulate a free-slip boundary condition in the neighbourhood of the contact points $\Gamma \cap \partial \Omega$.  This way the velocity error around the contact points will be a direct result of the curvature error. The reasoning is that if the curvature errors do not influence the velocity too much, we can be sure that other type of boundary conditions involving the contact angle, e.g. the slip-condition proposed in \cite{Ren2007} and extensively studied in \cite{ZHANG2020109636}, can be applied without interference.

With this type of boundary condition for the velocity and since there is no outside force again, we know this problem has the same exact solution
\begin{align*}
&u = 0,
&p = \left\{\begin{array}{ll} c_0 + \kappa, & x\in \Omega_1 \\
         c_0 , & x\in \Omega_2\end{array}\right. 
\end{align*} with a constant $c_0 \in \mathbb{R}$.

\subsubsection{Orthogonal Interface Angle}
\label{OrthogonalWetting}

First we will simply choose 
$c = \begin{pmatrix}
0\\ -1
\end{pmatrix} \in \partial \Omega$ as the center of our circle while the radius $R = 0.5$ remains the same as in the previous section. This leads to an orthogonal angle between the interface $\Gamma$ and the boundary $\partial \Omega$ in the contact points.

\paragraph{Exact Interface}

In our first wetting test in Figure \ref{fig:WettingOrthogonalExact} for the exact interface we see a very similar behavior as in the case for the static droplet in Figure \ref{fig:CenterDropletExact} as in we need to cross a certain treshold of the regularization parameter $\alpha$ for the solution of our reconstruction to be smooth and accurate and thus reducing the errors in velocity and pressure as well.

\begin{figure}[h!]
  \centering
\begin{subfigure}{.5\textwidth}
  \centering
  \includegraphics[width=.9\linewidth]{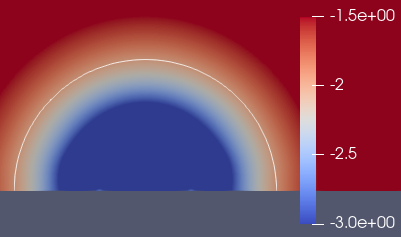}
  \caption{$\alpha = 10$}

\end{subfigure}%
 \hfill
\begin{subfigure}{.5\textwidth}
  \centering
  \includegraphics[width=.9\linewidth]{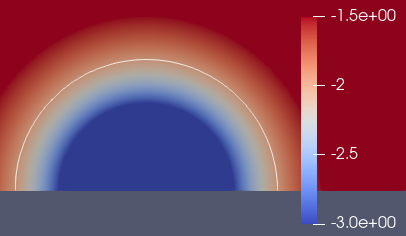}
  \caption{$\alpha = 0.01$}
\end{subfigure}%

\begin{subfigure}{.5\textwidth}
  \centering
  \includegraphics[width=.9\linewidth]{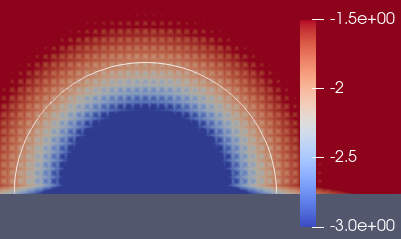}
  \caption{$\alpha = 0.0001$}
\end{subfigure}%
 \hfill
\begin{subfigure}{.5\textwidth}
  \centering
  \includegraphics[width=.9\linewidth]{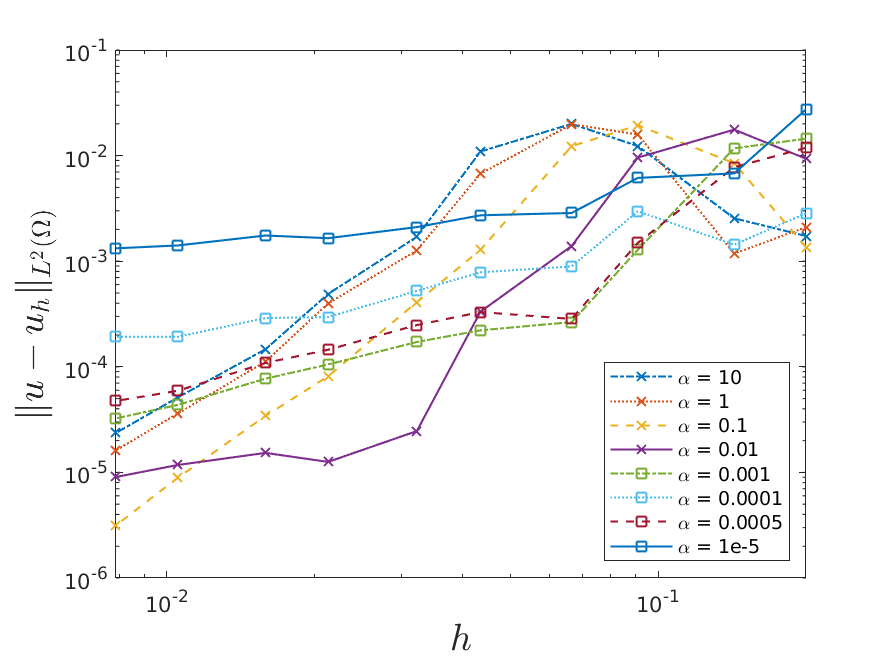}
  \caption{Velocity error}
\end{subfigure}%

\begin{subfigure}{.5\textwidth}
  \centering
  \includegraphics[width=.9\linewidth]{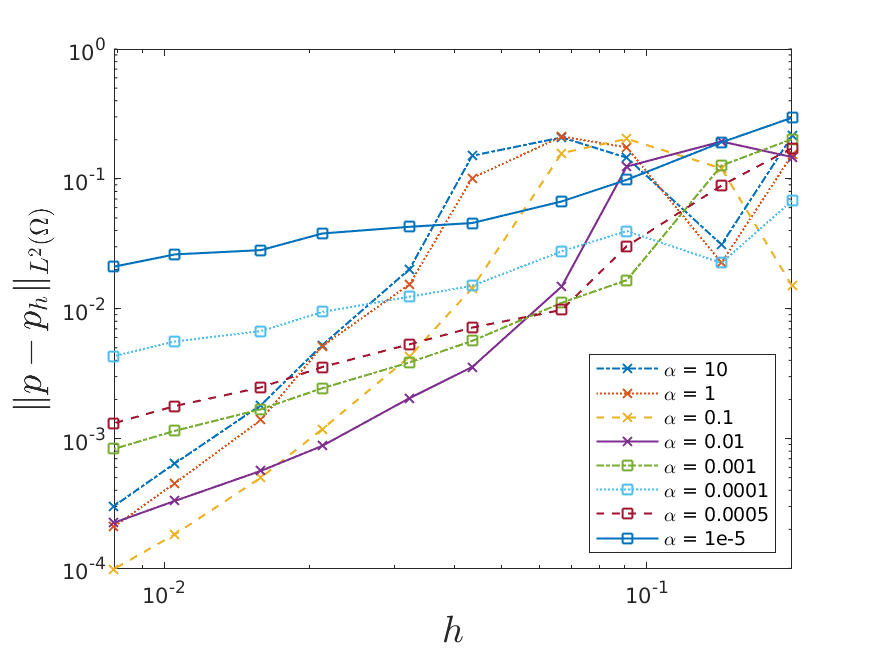}
  \caption{Pressure error}
\end{subfigure}%
 \hfill
\begin{subfigure}{.5\textwidth}
  \centering
  \includegraphics[width=.9\linewidth]{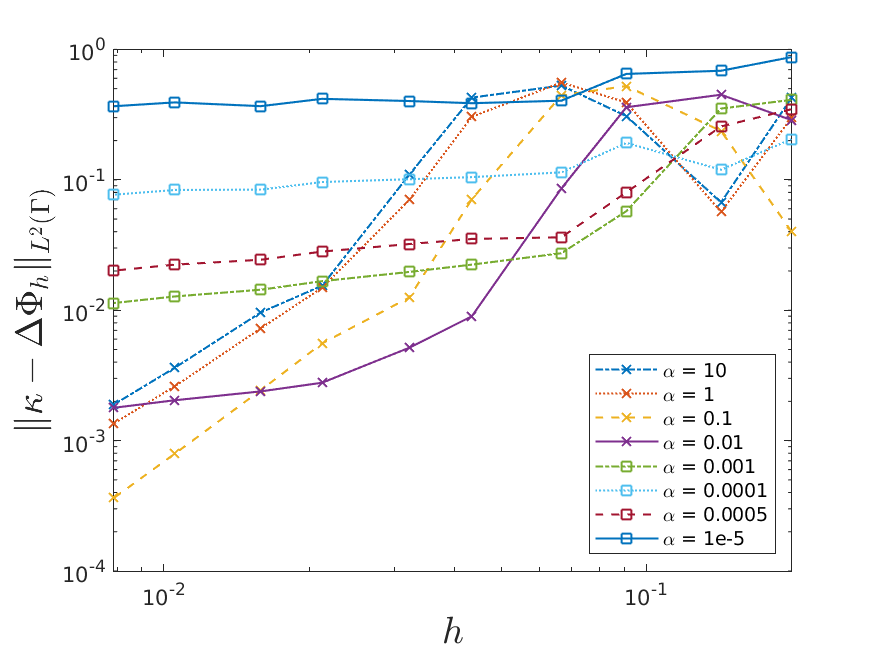}
  \caption{Curvature error}
\end{subfigure}%

\caption{Orthogonal wetting results for the exact interface function}
  \label{fig:WettingOrthogonalExact}
\end{figure}

\paragraph{Numerical signed distance function}

For the wetting scenario with the numerical signed distance function in Figure \ref{fig:WettingOrthogonalAllSigneed} we also see a similar outcome as for the static droplet test in Figure \ref{fig:CenterDropletAllSigned}. Especially we note that the results are better the larger the regularization parameter $\alpha$ is and the error behaves similarly for large $\alpha$.

\begin{figure}[h!]
  \centering
\begin{subfigure}{.5\textwidth}
  \centering
  \includegraphics[width=.9\linewidth]{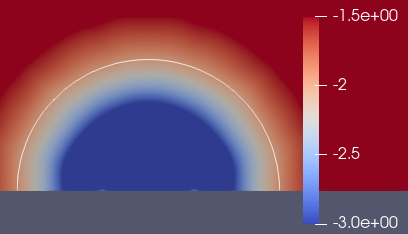}
  \caption{$\alpha = 10$}

\end{subfigure}%
 \hfill
\begin{subfigure}{.5\textwidth}
  \centering
  \includegraphics[width=.9\linewidth]{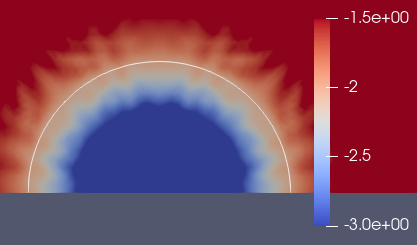}
  \caption{$\alpha = 0.01$}
\end{subfigure}%

\begin{subfigure}{.5\textwidth}
  \centering
  \includegraphics[width=.9\linewidth]{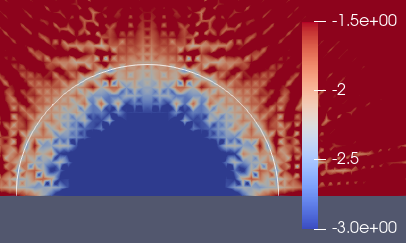}
  \caption{$\alpha = 0.0001$}
\end{subfigure}%
 \hfill
\begin{subfigure}{.5\textwidth}
  \centering
  \includegraphics[width=.9\linewidth]{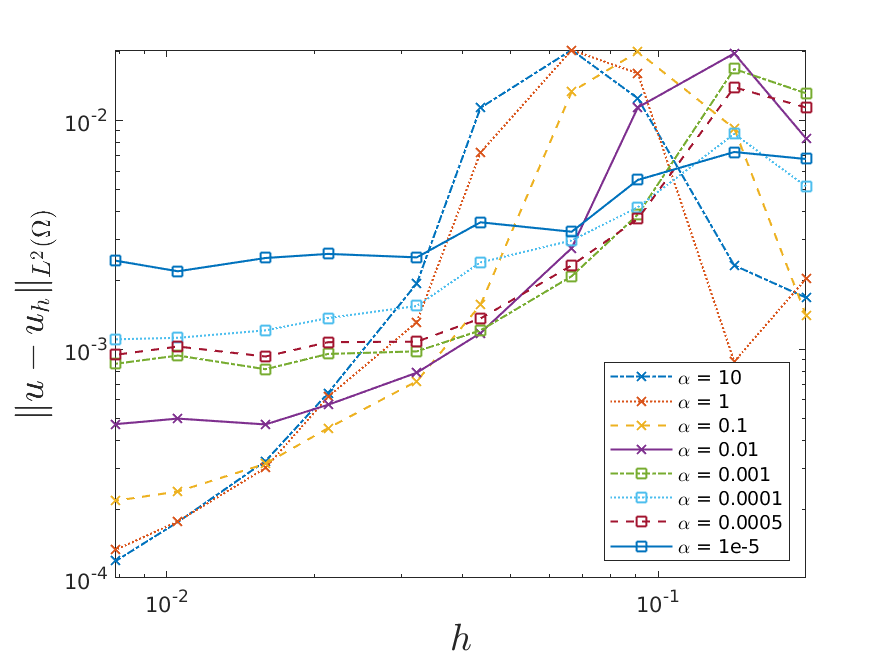}
  \caption{Velocity error}
\end{subfigure}%

\begin{subfigure}{.5\textwidth}
  \centering
  \includegraphics[width=.9\linewidth]{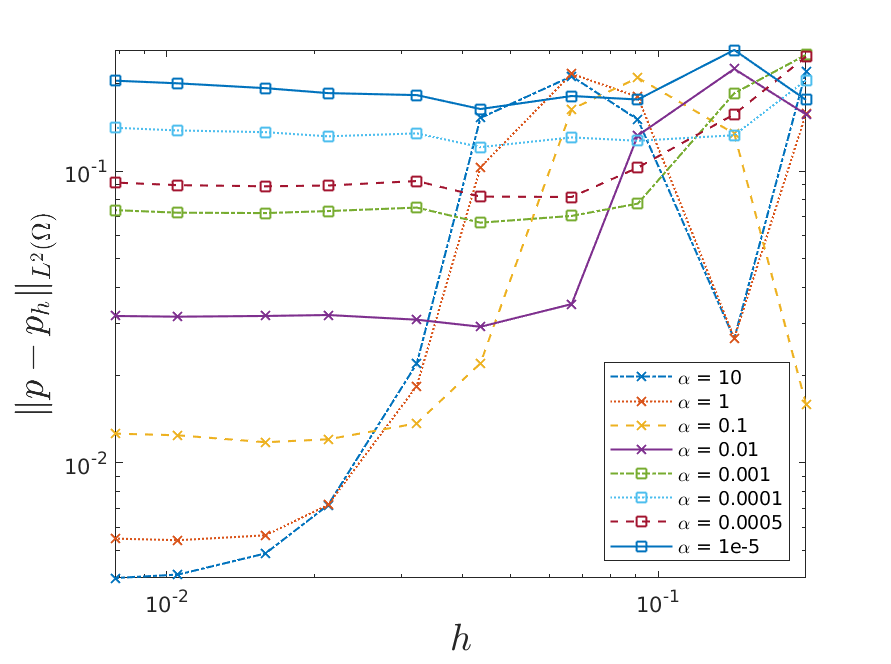}
  \caption{Pressure error}
\end{subfigure}%
 \hfill
\begin{subfigure}{.5\textwidth}
  \centering
  \includegraphics[width=.9\linewidth]{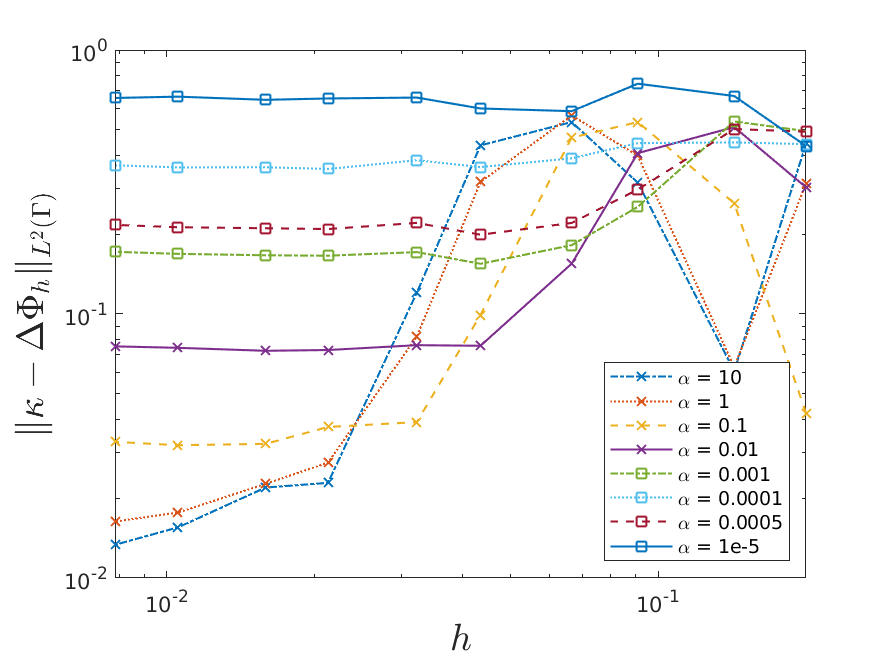}
  \caption{Curvature error}
\end{subfigure}%

\caption{Orthogonal wetting results for the numerical signed interface function}
  \label{fig:WettingOrthogonalAllSigneed}
\end{figure}

\clearpage
\paragraph{FMM level set function}

Now in the case of the FMM level set function, the results in Figure \ref{fig:WettingOrthogonalFMM} are again similar to the results of the center droplet case in Figure \ref{fig:CenterDropletFMM}. The error behavior and the influence of $\alpha$ on it mimic the static droplet case, although we note the errors are behaving a bit worse. 

\begin{figure}[h!]
  \centering
\begin{subfigure}{.5\textwidth}
  \centering
  \includegraphics[width=.9\linewidth]{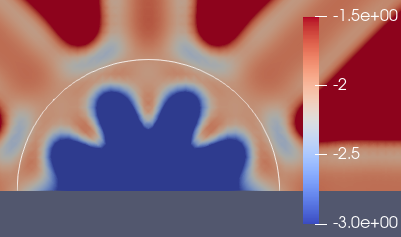}
  \caption{$\alpha = 10$}

\end{subfigure}%
 \hfill
\begin{subfigure}{.5\textwidth}
  \centering
  \includegraphics[width=.9\linewidth]{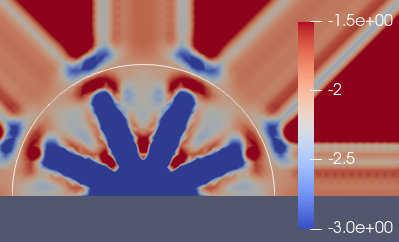}
  \caption{$\alpha = 0.01$}
\end{subfigure}%

\begin{subfigure}{.5\textwidth}
  \centering
  \includegraphics[width=.9\linewidth]{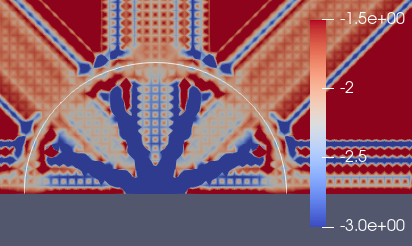}
  \caption{$\alpha = 0.0001$}
\end{subfigure}%
 \hfill
\begin{subfigure}{.5\textwidth}
  \centering
  \includegraphics[width=.9\linewidth]{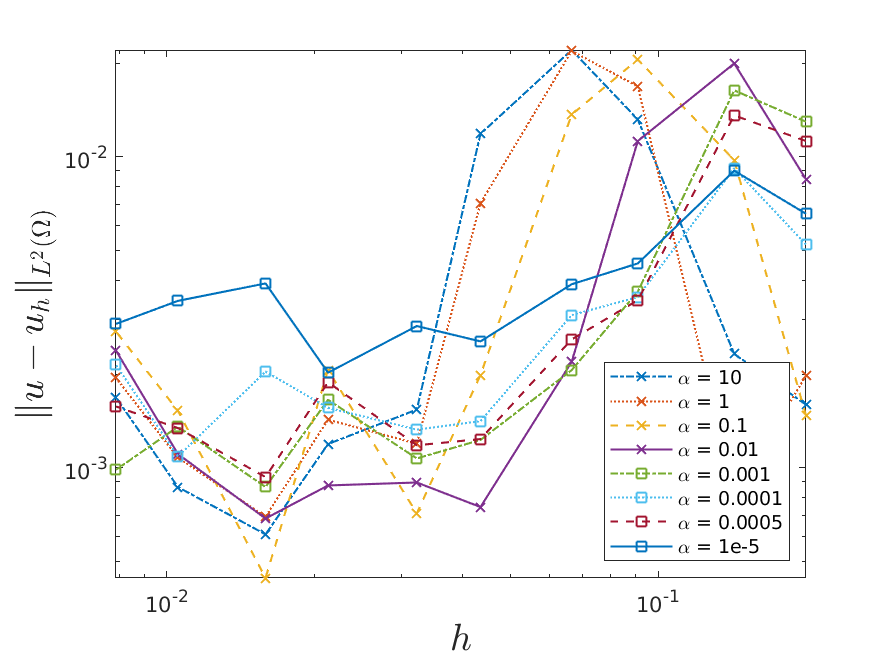}
  \caption{Velocity error}
\end{subfigure}%

\begin{subfigure}{.5\textwidth}
  \centering
  \includegraphics[width=.9\linewidth]{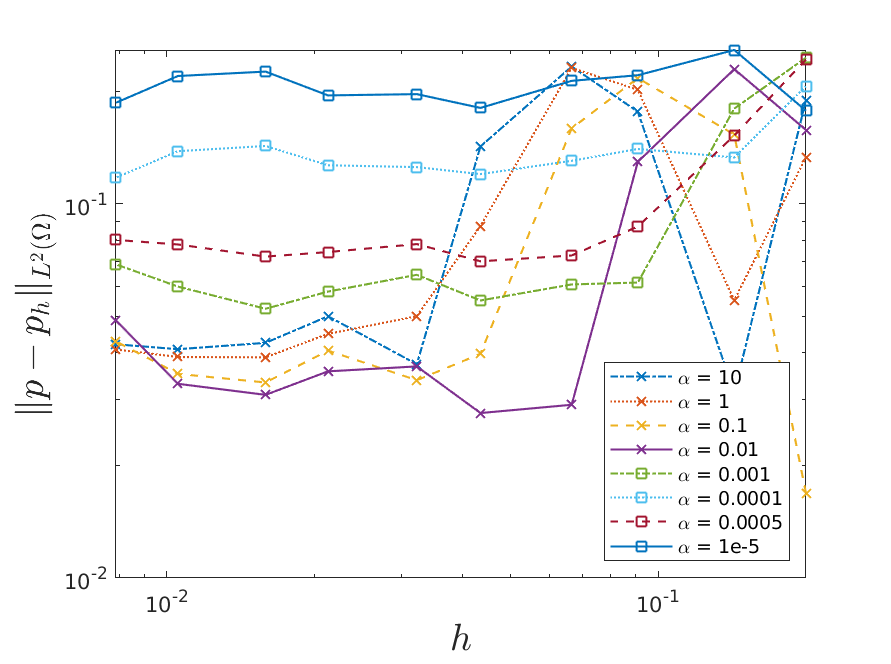}
  \caption{Pressure error}
\end{subfigure}%
 \hfill
\begin{subfigure}{.5\textwidth}
  \centering
  \includegraphics[width=.9\linewidth]{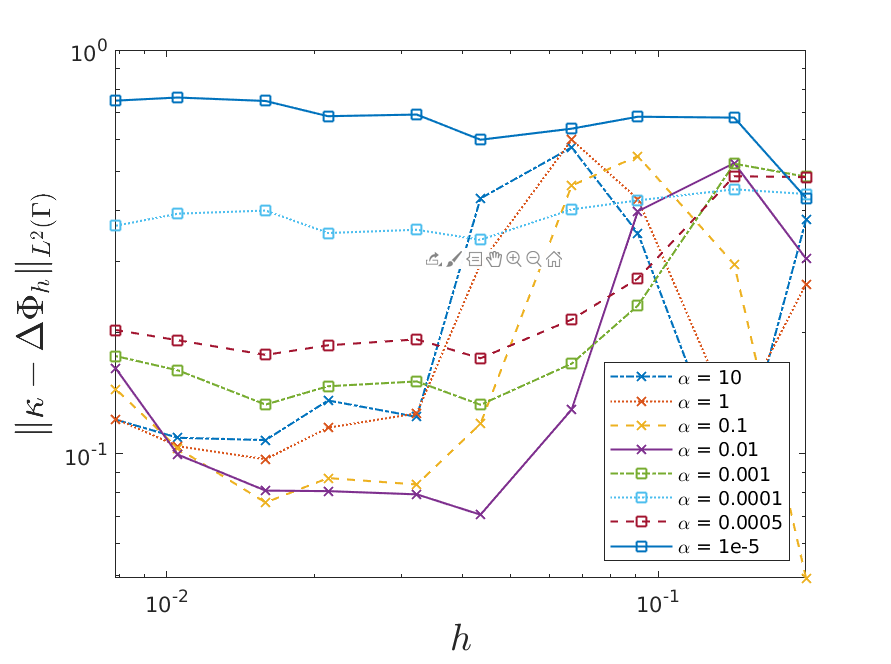}
  \caption{Curvature error}
\end{subfigure}%

\caption{Orthogonal wetting results for the FMM interface function}
  \label{fig:WettingOrthogonalFMM}
\end{figure}

\clearpage
\subsubsection{Sharp Interface Angle}

In this section we will now perform another series of wetting simulations as in Section \ref{OrthogonalWetting}  but with the center $c$ changed to $c = \begin{pmatrix}
0\\ -1.25
\end{pmatrix}$ such that the interface $\Gamma$ intersects the domain boundary $\partial \Omega$ in a sharper angle.

Considering different contact angles is beneficial to ensure that our reconstruction will later work in a full and time depedent wetting simulation.

\paragraph{Exact Interface}

First, in the case of the projected exact interface, we note that the error behavior in Figure \ref{fig:WettingSharpExact} does not resemble the error behavior of the corresponding static droplet test in Figure \ref{fig:CenterDropletExact} or the previous wetting test in Figure \ref{fig:WettingOrthogonalExact}. Notably, the error is now becoming worse for very large $\alpha$ and there seems to be a sweetspot for the regularization parameter where the error is minimal. This is akin to the previous Laplacian test case we have conducted and the observation is similar as well: With very large $\alpha$ our reconstruction is getting smoothed in an undesirable way. 

\begin{figure}[h!]
  \centering
\begin{subfigure}{.5\textwidth}
  \centering
  \includegraphics[width=.9\linewidth]{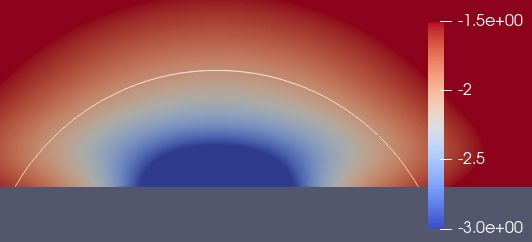}
  \caption{$\alpha = 10$}

\end{subfigure}%
 \hfill
\begin{subfigure}{.5\textwidth}
  \centering
  \includegraphics[width=.9\linewidth]{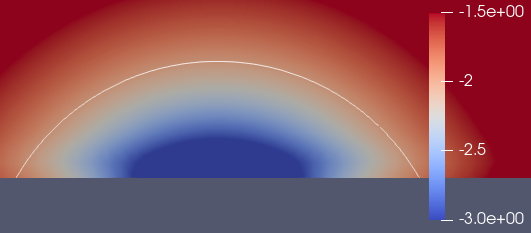}
  \caption{$\alpha = 0.01$}
\end{subfigure}%

\begin{subfigure}{.5\textwidth}
  \centering
  \includegraphics[width=.9\linewidth]{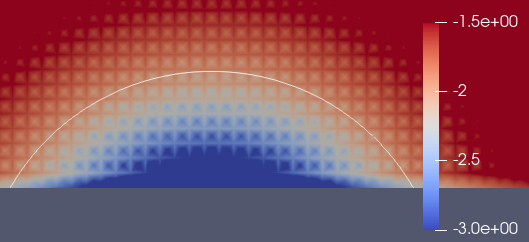}
  \caption{$\alpha = 0.0001$}
\end{subfigure}%
 \hfill
\begin{subfigure}{.5\textwidth}
  \centering
  \includegraphics[width=.9\linewidth]{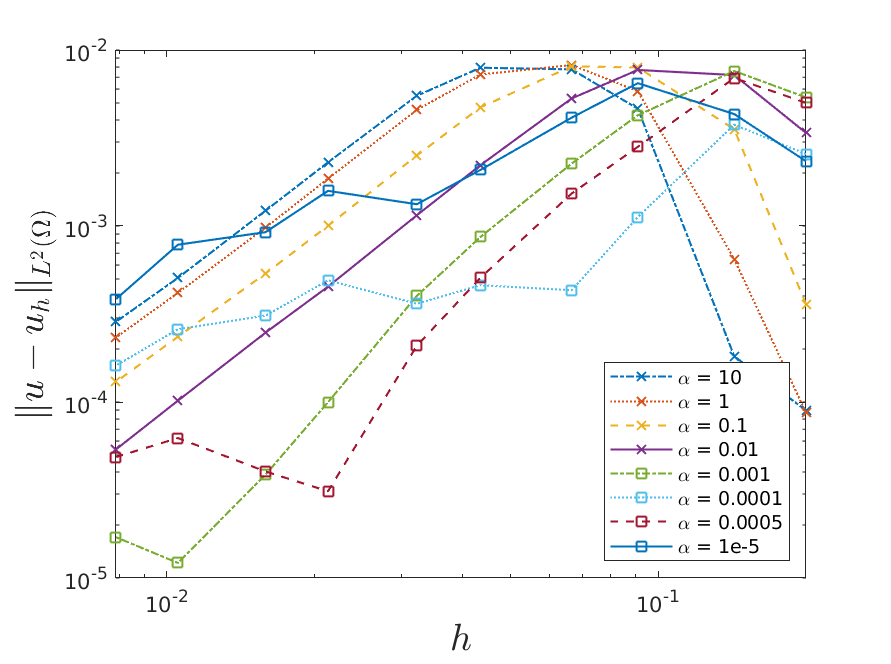}
  \caption{Velocity error}
\end{subfigure}%

\begin{subfigure}{.5\textwidth}
  \centering
  \includegraphics[width=.9\linewidth]{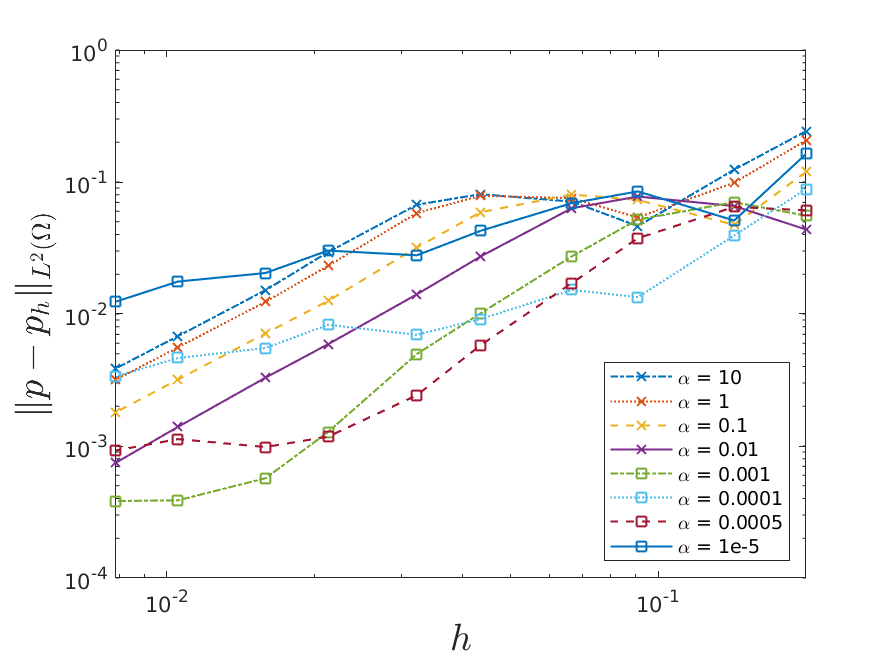}
  \caption{Pressure error}
\end{subfigure}%
 \hfill
\begin{subfigure}{.5\textwidth}
  \centering
  \includegraphics[width=.9\linewidth]{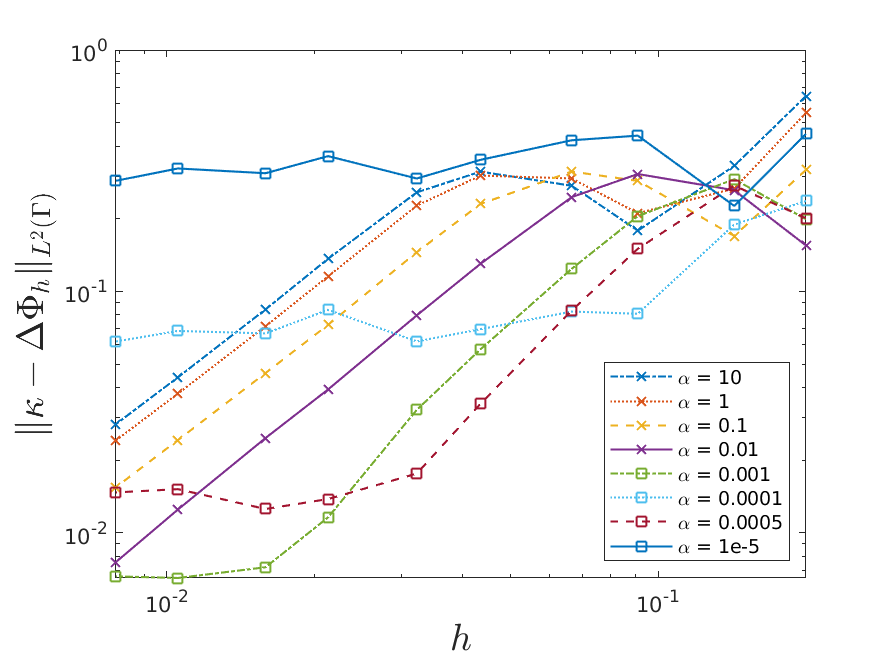}
  \caption{Curvature error}
\end{subfigure}%

\caption{Sharp wetting results for the exact interface function}
  \label{fig:WettingSharpExact}
\end{figure}

\clearpage
\paragraph{Comparison between $H^2$ and $H^3$ reconstruction}
Briefly going back to the $H^2$ reconstruction, we can see in Figure \ref{fig:H2vsH3WettingSharp} that the boundary conditions for the $H^2$-reconstruction negatively affect the curvature reconstruction devastatingly. As we can see in the error graphs, the curvature error even becomes larger when increasing the step size. As such we deem $H^2$-reconstruction unusable for our purpose of calculating an accurate curvature term.

\begin{figure}[h!]
  \centering
\begin{subfigure}{.5\textwidth}
  \centering
  \includegraphics[width=.9\linewidth]{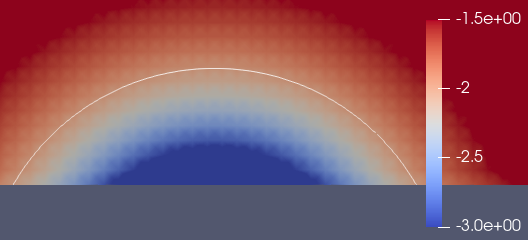}
  \caption{$H^3$ reconstruction with $\alpha = 0.0005$}

\end{subfigure}%
 \hfill
\begin{subfigure}{.5\textwidth}
  \centering
  \includegraphics[width=.9\linewidth]{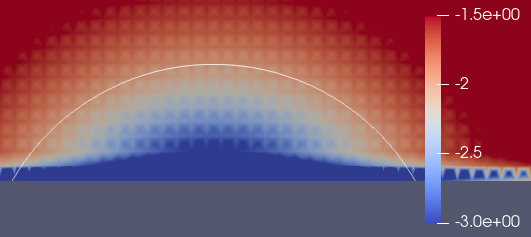}
  \caption{$H^2$ reconstruction with $\alpha = 0.01$}
\end{subfigure}%

\begin{subfigure}{.5\textwidth}
  \centering
  \includegraphics[width=.9\linewidth]{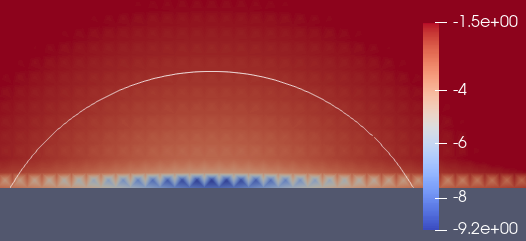}
  \caption{$H^2$ reconstruction with $\alpha = 0.01$, rescaled color bar}
\end{subfigure}%
 \hfill
\begin{subfigure}{.5\textwidth}
  \centering
  \includegraphics[width=.9\linewidth]{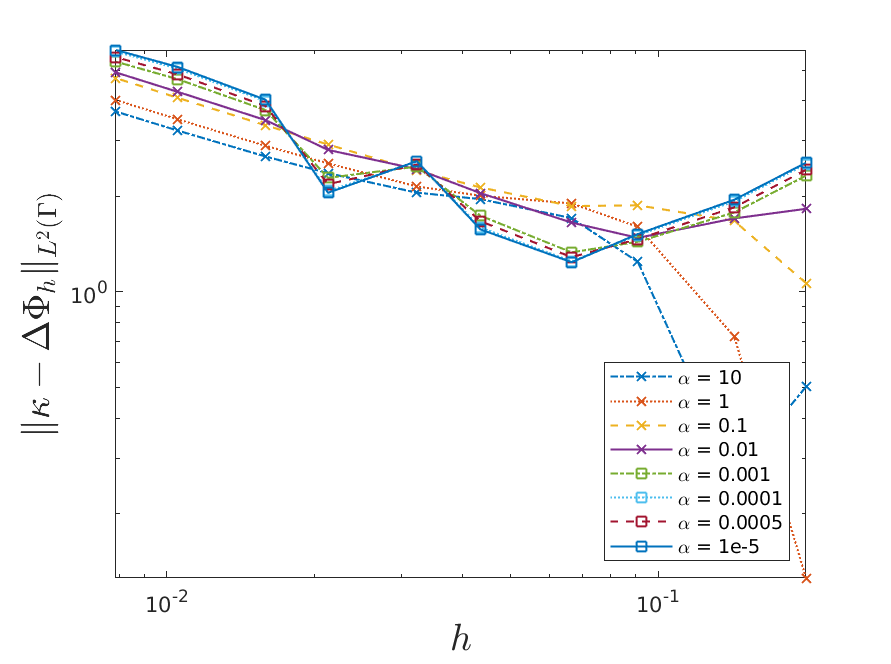}
  \caption{Error graphs for $H^2$ reconstruction}
  \end{subfigure}
    \caption{Comparison between $H^3$ and $H^2$ Tikhonov based reconstruction for the case of a sharp interface angle}
    \label{fig:H2vsH3WettingSharp}
\end{figure}

\clearpage
\paragraph{Numerical signed distance function and FMM level set function}

Finally we will apply our $H^3$-reconstruction to the more realistic examples of a numerical signed distance function and an FMM level set function. As we can see in Figures \ref{fig:WettingSharpAllSigneed} and \ref{fig:WettingSharpFMM} the resulting reconstructions are considerably worse around the contact point than in the case of the projected exact level set function. As we see in the error graphs, increasing the step size even has a negative outcome for the errors.

\begin{figure}[h!]
  \centering
\begin{subfigure}{.5\textwidth}
  \centering
  \includegraphics[width=.9\linewidth]{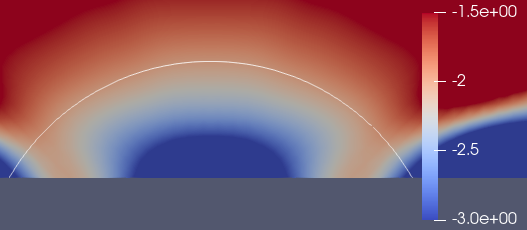}
  \caption{$\alpha = 10$}

\end{subfigure}%
 \hfill
\begin{subfigure}{.5\textwidth}
  \centering
  \includegraphics[width=.9\linewidth]{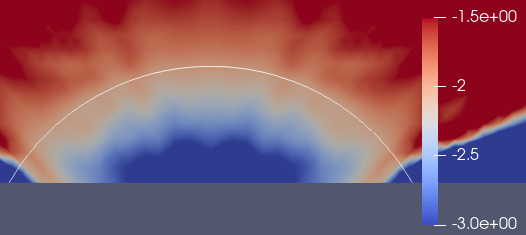}
  \caption{$\alpha = 0.01$}
\end{subfigure}%

\begin{subfigure}{.5\textwidth}
  \centering
  \includegraphics[width=.9\linewidth]{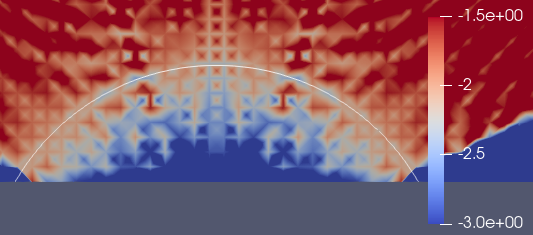}
  \caption{$\alpha = 0.0001$}
\end{subfigure}%
 \hfill
\begin{subfigure}{.5\textwidth}
  \centering
  \includegraphics[width=.9\linewidth]{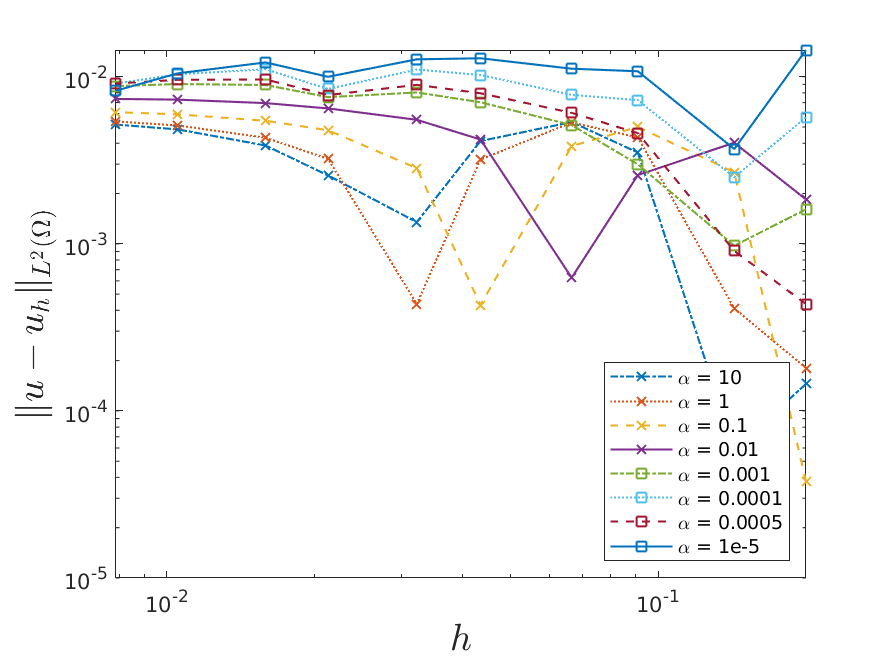}
  \caption{Velocity error}
\end{subfigure}%

\begin{subfigure}{.5\textwidth}
  \centering
  \includegraphics[width=.9\linewidth]{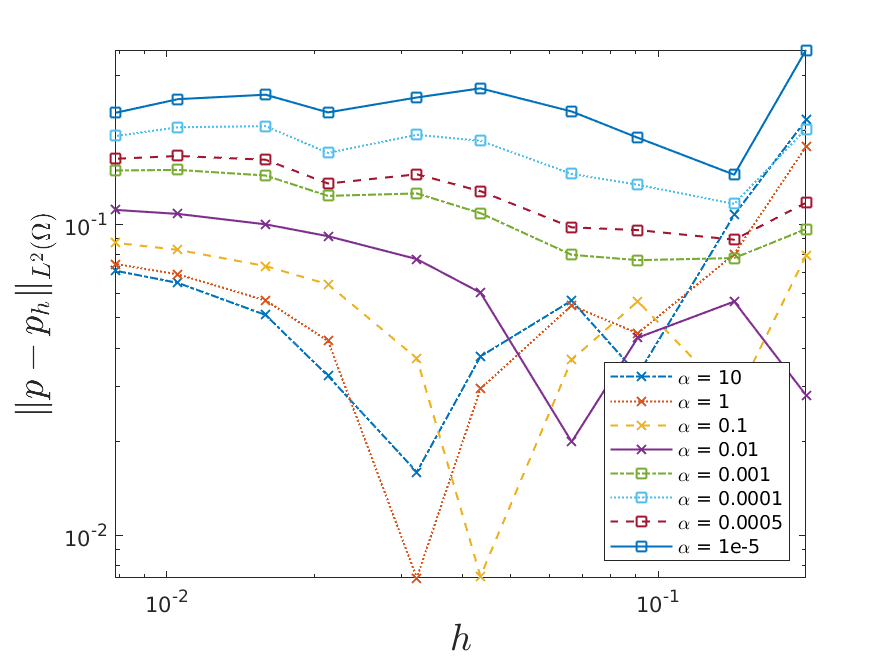}
  \caption{Pressure error}
\end{subfigure}%
 \hfill
\begin{subfigure}{.5\textwidth}
  \centering
  \includegraphics[width=.9\linewidth]{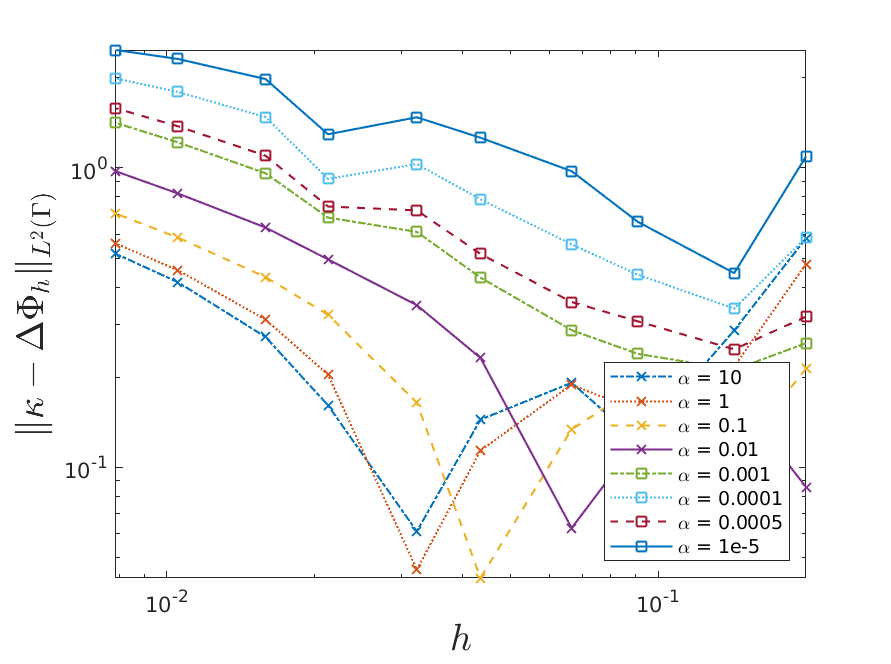}
  \caption{Curvature error}
\end{subfigure}%

\caption{Sharp wetting results for the numerical signed interface function}
  \label{fig:WettingSharpAllSigneed}
\end{figure}

\begin{figure}[h!]

  \centering
\begin{subfigure}{.9\textwidth}
  \centering
   \includegraphics[width=.8\linewidth]{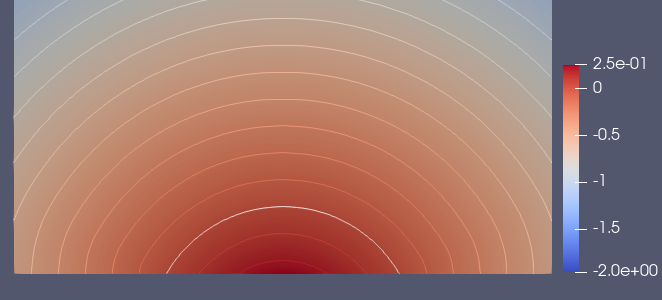}
  \caption{Isolines of numerical signed distance function for sharp wetting}

\end{subfigure}%
 \hfill
\begin{subfigure}{.9\textwidth}
  \centering
   \includegraphics[width=.8\linewidth]{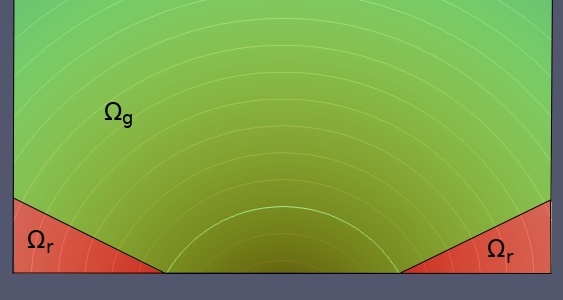}
  \caption{Domains in sharp wetting numerical signed distance function}
\end{subfigure}%

\caption{Isoline behavior for sharp wetting case}
    \label{fig:WettingSharpIsolines}
\end{figure}

Now the reason for this behavior comes from the way the level set functions are calculated numerically: Let $Circ_c:= \{x \in \mathbb{R}^2: |x-c|= 0.5 \}$ be the circle with radius $0.5$ around the center $c \in \mathbb{R}^2$. In this test setting we had chosen 
$c = \begin{pmatrix}
0\\ -1.25
\end{pmatrix}$. 

When we calculate the level set function numerically, we minimize the distances to $Circ_c \cap \Omega$, i.e. the part of the circle $Circ_c$ which lives on the domain $\Omega$. Obviously this is what we are supposed to do, as in a time dependent setting where the interface is part of the solution we will know the interface only on our domain. If we look at the isolines of the numerical signed distance function in Figure \ref{fig:WettingSharpIsolines}, we see that we have two regions. In the green region the nearest interface point for an $x\in \Omega_g$ is always found on the intersection between the interface $\Gamma_h$ and the line $\overline{xc}$.  

But in the red region $\Omega_r$, the nearest interface point is always the nearest contact point, i.e. the closer point where $\Gamma_h$ intersects $\partial\Omega$. So, in the green region the isolines of level set function $\phi_h$ consist of concentric circles around $c$ whereas in the red regions the isolines consist of concentric circles around the contact points $\Gamma_h \cap \partial\Omega$. This sudden change has a huge effect when reconstructing second derivatives. 

As such the 'exact' interface results in Figure \ref{fig:WettingSharpExact} actually did have a bias in them, as this projected level set function takes all points of the circle $Circ_c$ into account, i.e. interface points outside of the domain $\Omega$.

\begin{figure}[h!]
  \centering
\begin{subfigure}{.5\textwidth}
  \centering
  \includegraphics[width=.9\linewidth]{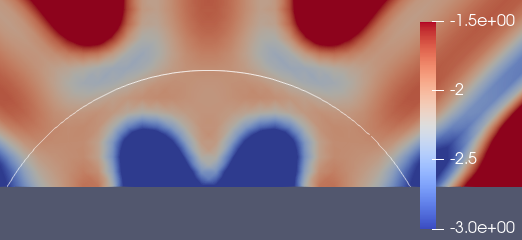}
  \caption{$\alpha = 10$}

\end{subfigure}%
 \hfill
\begin{subfigure}{.5\textwidth}
  \centering
  \includegraphics[width=.9\linewidth]{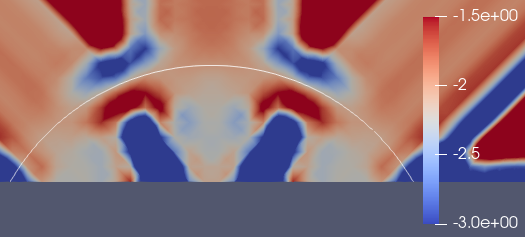}
  \caption{$\alpha = 0.01$}
\end{subfigure}%

\begin{subfigure}{.5\textwidth}
  \centering
  \includegraphics[width=.9\linewidth]{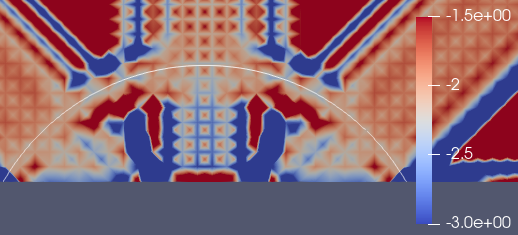}
  \caption{$\alpha = 0.0001$}
\end{subfigure}%
 \hfill
\begin{subfigure}{.5\textwidth}
  \centering
  \includegraphics[width=.9\linewidth]{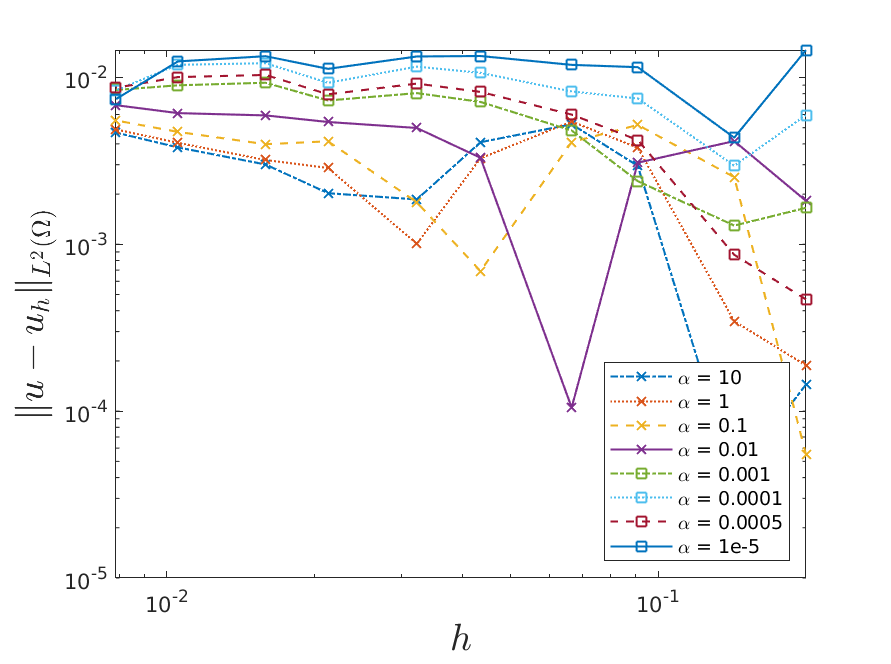}
  \caption{Velocity error}
\end{subfigure}%

\begin{subfigure}{.5\textwidth}
  \centering
  \includegraphics[width=.9\linewidth]{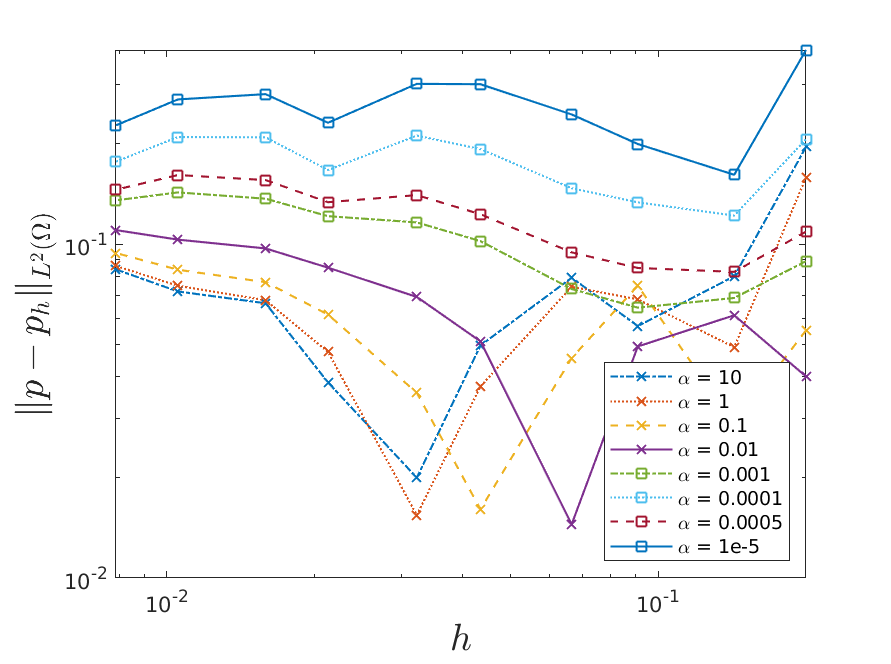}
  \caption{Pressure error}
\end{subfigure}%
 \hfill
\begin{subfigure}{.5\textwidth}
  \centering
  \includegraphics[width=.9\linewidth]{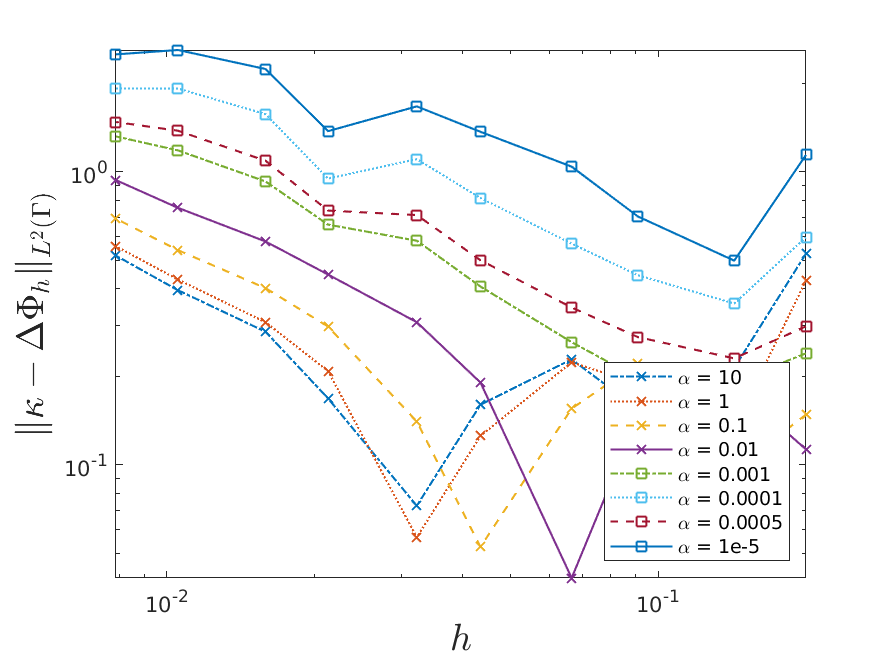}
  \caption{Curvature error}
\end{subfigure}%

\caption{Sharp wetting results for the FMM interface function}
  \label{fig:WettingSharpFMM}
\end{figure}

Conclusively, the problems we experience in this section are therefore not inherent to our method of curvature reconstruction but come from the input data of the level set function. One possible remedy could be to extend the numerical interface $\Gamma_h$ past the boundary $\partial\Omega$ in a sensible way and calculate the level set function based on this extended interface. 
But this will not be done in this paper anymore and shall be investigated in a follow up work.

\clearpage
\section{Conclusion and outlook}

In this article we have shown a method to calculate the unique minimum of a functional of the type
\begin{align*}
J(\Phi) = \|A\Phi - \phi_h \|_{L^2(\Omega)}^2 + \alpha \| \Phi \|^2_{H^k(\Omega)}.
\end{align*} by solving a PDE of order $2k$ in the cases $k =2,3$.
During numerical experiments, we have successfully shown this type of functional is suitable to find the Laplacian of a function where we only know its piecewise linear representation. Particularly, we have seen how both the $H^2$ and $H^3$ reconstruction can approximate the Laplacian correctly in the interior of the domain. The behavior of the reconstruction towards the domain boundary looked promising in the $H^3$ case but warrants further investigation with regards to the used FEM spaces.

Furthermore, we have successfully shown how this method can be applied to the case of calculating the surface tension functional when solving the two-phase stationary Stokes equation when the exact level set function describing the interface is known. In the case of a numerically calculated level set function we have identified additional problems in their construction which need to be solved before this type of method can be applied in solving time dependent two-phase Navier-Stokes equation.

Another possible future direction of this work is investigating analytical error behavior of our method and whether the analytical error analysis matches our observation of the errors during our conducted numerical research.

An additional point of interest lies in optimization of the computational cost of our method, which will need to be adressed in the future.

\section*{Acknowledgment}

This work was partially funded by the Deutsche Forschungsgemeinschaft (DFG, German Research Foundation) – 439916647.

\footnotesize
\bibliography{literature}
\end{document}